\numberwithin{equation}{section}
\numberwithin{figure}{section}
\theoremstyle{plain}
\newtheorem{thm}{\protect\theoremname}[section]
\theoremstyle{definition}
\newtheorem{rem}[thm]{\protect\remarkname}
\theoremstyle{definition}
\theoremstyle{plain}
\newtheorem{prop}[thm]{\protect\propositionname}
\theoremstyle{plain}
\newtheorem{lem}[thm]{\protect\lemmaname}
\newtheorem{claim}[thm]{Claim}
\theoremstyle{plain}
\theoremstyle{plain}
\newtheorem{cor}[thm]{\protect\corollaryname}
\theoremstyle{definition}
\theoremstyle{definition}
\theoremstyle{definition}
\newenvironment{example}
{\pushQED{\qed}\examplex}
{\popQED\endexamplex}
\newcommand{\eps}{\varepsilon}
\newcommand{\Om}{\Omega}
\newcommand{\R}{\mathbb{R}}
\newcommand{\N}{\mathbb{N}}
\newcommand{\Z}{\mathbb{Z}}
\newcommand{\mF}{\mathcal{F}}\newcommand{\mS}{\mathcal{S}}\linespread{1.1}
\newcommand{\C}{\mathbb{C}}
\newcommand{\dd}{\mathbf{d}}
\newcommand{\mP}{\mathcal{P}}
\newcommand{\bp}{\begin{proof}}
	\newcommand{\ep}{\end{proof}}
\newcommand{\ummdim}{\overline{\mdim}_{\mathrm M}}
\newcommand{\lmmdim}{\underline{\mdim}_{\mathrm M}}
\newcommand{\mmdim}{\mdim_{\mathrm M}}
\newcommand{\mhdim}{\mdim_{\mathrm H}}
\newcommand{\madim}{\mdim_{\mathrm A}}
\DeclareMathOperator{\pdim}{pdim_{\mathrm A}}
\newcommand{\mldim}{\mdim_{\mathrm L}}
\newcommand{\mqadim}{\mdim_{\mathrm {qA}}}
\newcommand{\udim}{\overline{\dim}_{\mathrm M}\,}
\newcommand{\ldim}{\underline{\dim}_{\mathrm M}\,}
\newcommand{\bdim}{\dim_{\mathrm M}}
\newcommand{\htop}{h_{\mathrm{top}}}
\newcommand{\adim}{\dim_{\mathrm A}}
\newcommand{\ladim}{\dim_{\mathrm L}}
\newcommand{\supp}{\mathrm{supp}}
\DeclareMathOperator{\diam}{diam}
\DeclareMathOperator{\mdim}{mdim}
\newcommand{\sep}{\mathrm{sep}}
\DeclareMathOperator{\dist}{dist}
\providecommand{\conjecturename}{Conjecture}
\providecommand{\corollaryname}{Corollary}
\providecommand{\definitionname}{Definition}
\providecommand{\examplename}{Example}
\providecommand{\lemmaname}{Lemma}
\providecommand{\problemname}{Problem}
\providecommand{\propositionname}{Proposition}
\providecommand{\remarkname}{Remark}
\providecommand{\theoremname}{Theorem}
\providecommand{\taskname}{Task}
\begin{document}

 \author{Qiang Huo$^1$}
 \address{$^1$ School of Mathematical Sciences, University of Science and Technology of China, Hefei, Anhui, 230026, China}
 \email{qianghuo@ustc.edu.cn}

 \author{Adam \'Spiewak$^2$}
\address{$^2$ Institute of Mathematics of the Polish Academy of Sciences, ul. \'Sniadeckich 8, Warsaw, 00-656, Poland}
\email{ad.spiewak@gmail.com}

 \keywords{Mean Assouad dimension, mean Assouad spectrum, dimension interpolation, Bedford--McMullen carpet systems, topological conditional entropy, bi-Lipschitz equivalence.}
 \subjclass[2020]{28A80, 37B40, 37C45, 37B10.}

 \thanks{Q. Huo was partially supported by the China Postdoctoral Science Foundation 2025M773065, the National Key Research and Development Program of China 2024YFA1013600 and  Fundamental Research Funds for the Central Universities WK0010250102. A. \'Spiewak was partially supported by the National Science Centre (Poland) grant 2020/39/B/ST1/02329. The work began while the first-named author visited the Gda\'{n}sk branch of Institute of Mathematics of the Polish Academy of Sciences. He is grateful to the institute for its hospitality and warm atmosphere. Both authors are grateful to Yonatan Gutman, Istv\'{a}n Kolossv\'{a}ry, Ruxi Shi for useful discussions. We also thank Alex Rutar for bringing the reference \cite{OR25branching} to our attention. }

\title{Mean Assouad dimension and spectrum, with applications to infinite dimensional fractals}

\date{\today}
\maketitle
 \begin{abstract}
     We introduce the \textit{mean Assouad dimension} of a dynamical system, motivated by the Assouad dimension in fractal geometry. Using dimension interpolation, we further define the \textit{mean Assouad spectrum}. This provides a new family of bi-Lipschitz invariants of dynamical systems. We study its basic properties and calculate it for several classes of dynamical systems. As an application, we determine explicit formulae for the mean Assouad dimension and spectrum of infinite-dimensional Bedford--McMullen carpet systems, contributing to the program of studying infinite dimensional fractals, initiated recently by Tsukamoto.
 \end{abstract}

\tableofcontents

\section{Introduction}

\subsection{Background}
The classical dimension theory focuses on the complexity of sets and measures in terms of various dimensions. The most familiar are \textit{global} quantities like the Hausdorff and Minkowski  (box-counting) dimensions, which measure a set by covers of varying diameters and uniform diameters respectively. More recently, \textit{extreme} quantities like the lower and Assouad dimensions, which quantify the thinnest and thickest parts of a set across arbitrary pair of scales respectively, are intensively studied. One has the following relationship:
\begin{equation}\label{dimension comparison}
    \text{lower dimension} \leq \text{Hausdorff dimension}\leq\text{Minkowski dimension}\leq \text{Assouad dimension}.
\end{equation}
It is well-known that the phenomenon of `dimension gap' occurs for certain fractal sets. For instance, Mackay \cite{Mac11} and Fraser \cite{Fra14} proved that for planar Bedford--McMullen carpets \cite{Bed84,Mc84} with non-uniform fibres, all inequalities in \eqref{dimension comparison} are strict. Recently, Jurga \cite{Jur23} constructed examples of dynamically invariant sets (i.e. invariant under certain non-conformal expanding maps) for which the upper and lower Minkowski dimensions do not coincide.
Nowadays, a fruitful approach using \textit{dimension interpolation} \cite{Fra21b} has been developed to better understand the gap between various dimensions. 
The starting point was the Assouad spectrum and its dual spectrum introduced by Fraser and Yu \cite{FY18}, with the following relations:
\begin{equation*}
    \text{upper Minkowski dimension} \leq \text{Assouad spectrum}\leq\text{Assouad dimension},
\end{equation*}
\begin{equation*}
    \text{lower dimension} \leq \text{lower spectrum}\leq\text{lower Minkowski dimension}.
\end{equation*}
The Assouad spectrum resembles the definition of the Assouad dimension by fixing the two scales with a parameter. These and similar dimension spectra have been extensively studied in the literature, see e.g. \cite{Ban23,BC23,BF23,BF24,BK24,BFKR24,BR22,CFY22,Feng24,FY18b,GHM21,Rut24}.

In 1999, Gromov \cite{G} introduced a new invariant of dynamical systems, called \textit{mean dimension}, connecting dimension theory with dynamics.  Mean dimension was systematically developed by Lindenstrauss and Weiss \cite{LW00} and Lindenstrauss \cite{Lin99}, and received widespread attention from various fields, including equivariant embedding problems \cite{Gut15,GT20,Levin23} (see \cite{GLT16,GQT19} for similar results on $\Z^k$-actions; see also \cite{Nar24} for recent developments on amenable group actions), its connection to rate distortion theory \cite{LT18,LT19,Tsu25RandomBrody}, crossed product C$^*$ algebras \cite{ENDuke,Niucomparisonradius} and others. In addition to the topological mean dimension introduced by Gromov, a metric-dependent quantity named metric mean dimension was introduced in \cite{LW00}, combining the ideas of topological entropy and Minkowski dimension. The metric mean dimension turns out to be a useful tool to obtain the upper bound on mean topological dimension for some dynamical systems arising from geometric analysis and complex geometry 
(\cite{Yang-Mills,Tsu18Brody,Tsu22}), and recently it is intensively studied on its own, e.g. in the context of ergodic theory \cite{LT18, Shi22,CPV24}, infinite-dimensional fractal geometry \cite{Tsu25carpets, HuoSponges24,LL25} and analog compression \cite{mmdimcompress}. Further, Lindenstrauss and Tsukamoto \cite{LT19} defined the mean Hausdorff dimension, a dynamical analogue of the Hausdorff dimension. We refer the reader to \cite[Proposition 3.2]{LT19} for a comparison between these dynamical dimensions. Recently, Liu, Selmi and Li \cite{LSL24} introduced and studied the mean $\Psi$-intermediate dimensions of dynamical systems, which is a continuously parameterized family of dynamical dimensions varying between the mean Hausdorff dimension and the metric mean dimension.

\subsection{Results}
The aim of this paper is to establish a way to dynamicalize the Assouad type dimensions and spectra, introducing their mean dimension versions. Our first motivation is introducing the methods of Assouad dimension to the dynamical systems theory. As our main application, we characterize the mean Assouad dimension of Bedford-McMullen carpet systems, contributing to the program of studying infinite-dimensional fractals, initiated recently by Tsukamoto \cite{Tsu25carpets}, who calculated their mean Hausdorff and metric dimensions. We also expect the mean Assouad dimension to be useful in the mean dimension-based embedding theory \cite{GT20, GQT19}, as the classical Assouad dimension governs the existence of almost bi-Lipschitz embeddings, see e.g. \cite{Olson02, Rob11, Fra21}. With that in mind, we calculate the mean Assouad dimension of the space of band-limited functions with the shift dynamics, which played a crucial role in the Gutman and Tsukamoto's proof of the embedding theorem for mean topological dimension \cite{GT20}. We also provide a comparison with other possible defintions of mean Assouad dimension.
 
\subsection{Organization of the paper}

We begin with the definition of mean Assouad dimension and spectra in Subsection \ref{defn:mean Assouad dim}. Some of their basic properties are collected in Section \ref{results}. This includes its bi-Lipschitz invariance, basic inequalities between metric mean dimension and mean Assouad spectrum, as well as formulae in terms of local stable sets and non-wandering sets. In Section \ref{sec: examples} we calculate mean Assouad dimension of some examples, including the space of band-limited functions. Section \ref{sec:carpet systems} contains the calculation of the mean Assouad dimension and spectrum of Bedford--McMullen carpet systems (Theorem \ref{theorem: mean Assouad dimension of carpet systems}). In Section \ref{sec: other def} we make a comparison with other possible definitions of mean Assouad dimension.

\section{Mean Assouad dimension and spectra}

In this section we give the definitions of the main notions introduced in this paper - mean Assouad dimension and spectra. We begin with the review of fractal dimensions and spectra, as well as topological entropy and metric mean dimension, which will serve as basis for the new dynamical invariants.

\subsection{Fractal dimensions and dimension spectra}
Throughout the paper, $\log$ denotes the natural logarithm. Let $(X,d)$ be a compact metric space. For $\varepsilon>0$ and a subset set $K$ of $X$, we define the $\varepsilon$-\textbf{covering number} $N_d(K,\varepsilon)$ as the minimum $n\geq 1$ such that there exists an open cover $\{U_1,\ldots,U_n\}$ of $K$ satisfying $\diam U_i\leq\varepsilon$ for each $1\leq i\leq n$.
We also define the $\varepsilon$-\textbf{separating number} $\widehat{N}_d(K,\varepsilon)$ as the maximum $n\geq 1$ such that there exist $x_1,\ldots,x_n\in K$ satisfying $d(x_i,x_j)\geq\varepsilon$ for all $i\neq j$.
We define the \textbf{upper and lower Minkowski dimensions} (also called \textbf{box-counting dimensions}) of $(X,d)$ by
\begin{equation*}
    \udim(X,d)=\limsup\limits_{\varepsilon\to0}\frac{\log N_d(X,\varepsilon)}{\log(1/\varepsilon)}, \ldim(X,d)=\liminf\limits_{\varepsilon\to0}\frac{\log N_d(X,\varepsilon)}{\log(1/\varepsilon)}.
\end{equation*}
If these two values coincide, the common value is called the \textbf{Minkowski dimension} of $(X,d)$ and denoted by $\bdim(X,d)$.

The \textbf{Assouad dimension} of $(X,d)$ is defined by
\begin{equation*}
        \adim (X,d)=\inf\left\{s>0: \underset{C>0}{\exists}\ \underset{0<\rho<r}{\forall}\ \underset{x\in X}{\forall}\  N_d(B_d(x,r),\rho)\leq C(r/\rho)^s\right\},
\end{equation*}
where $B_d(x,r)=\{y\in X: d(x,y)\leq r\}$.

To better understand the gap between the Minkowski and Assouad dimensions, Fraser and Yu \cite{FY18} defined the \textbf{Assouad spectrum} of $(X,d)$ to be
\begin{equation*}
        \adim^{\theta} (X,d)=\inf\left\{s>0: \underset{C>0}{\exists}\ \underset{0<r<1}{\forall}\ \underset{x\in X}{\forall}\  N_d(B_d(x,r),r^{1/\theta})\leq C(r/r^{1/\theta})^s\right\}
\end{equation*}
for all $\theta\in(0,1)$.

\subsection{Topological entropy and metric mean dimension}\label{defn:topological conditional entropy}
By a \textbf{topological dynamical system} (\textit{TDS} for short) we understand a triple $(X,T,d)$ where $(X,d)$ is a compact metric space\footnote{Sometimes we omit $d$ from the notation.} and $T:X\to X$ is a homeomorphism. 
For $M\in\N$ define a metric $d_M$ on $X$ by 
\begin{equation*}
    d_M(x,y)=\max\limits_{0\leq m<M}d(T^m x,T^m y)
\end{equation*}
and $B_{d_M}(x,r)=\{y\in X: d_M(x,y)\leq r\}$. We call $B_{d_M}(x,r)$ the $M$-th \textbf{Bowen ball} of radius $r$ centred at $x$.

The \textbf{topological entropy} of $(X,T)$ is defined by
\begin{equation*}
    \htop(X,T)=\lim_{\varepsilon\to0}\left(\lim_{M\to\infty}\frac{\log N_{d_M}(X,\varepsilon)}{M}\right).
\end{equation*}
The limit over $M$ exists since $\log N_{d_M}(X,\varepsilon)$ is sub-additive\footnote{A sequence $\N\ni n\mapsto a_n\in[0,\infty)$ is called \textbf{sub-additive} if $a_{m+n}\leq a_m+a_n$ for every $m,n\in\N$.
Fekete's lemma (\cite[Proposition 6.2.3]{Coo15}) states that if $n\mapsto a_n$ is a sub-additive sequence, then the limit $\lim_{n\to\infty}\frac{a_n}{n}$ exists and equals $\inf_{n\in\N}\frac{a_n}{n}$.} in $M$. 
Let $(X,T)$ and $(Y,S)$ be two \textit{TDSs} with a factor map $\pi:(X,T)\to(Y,S)$ and $d$ be a metric on $X$. The \textbf{topological conditional entropy} (\cite{Mis76conditional, Dow11}) of $\pi$ is defined by
\begin{equation*}
\begin{split}
    \htop(X,T\vert Y,S)&=\lim_{\varepsilon\to0}\left(\lim_{M\to\infty}\frac{\sup_{y\in Y}\log N_{d_M}(\pi^{-1}(y),\varepsilon)}{M}\right)\\
    &=\lim_{\varepsilon\to0}\left(\limsup_{M\to\infty}\frac{\sup_{y\in Y}\log \widehat{N}_{d_M}(\pi^{-1}(y),\varepsilon)}{M}\right).
\end{split}
\end{equation*}
Notice that the quantity $\sup_{y\in Y}\log N_{d_M}(\pi^{-1}(y),\varepsilon)$ is sub-additive in $M$.  The equality between the above expressions for $\htop(X,T\vert Y,S)$ follows from straightforward inequalities
\begin{equation}\label{eq: covering sep numbers ineq} 
N_d(K,\varepsilon)\leq \widehat{N}_d(K,\varepsilon / 4)\leq N_d(K,\varepsilon / 4),
\end{equation}
where $K$ is an arbitrary subset of $X$. In particular, it was shown in \cite{DS02fiber} that the topological conditional entropy coincides with the supremum of the topological \textit{fiber entropy}:
\begin{equation}\label{eq:fiber entropy}
    \htop(X,T\vert Y,S)=\sup_{y\in Y}\htop(\pi^{-1}(y),T).
\end{equation}
The topological entropy and topological conditional entropy are topological invariants of dynamical systems. In particular, they are independent of the choice of a compatible metric. 

A different measure of complexity is needed for systems with infinite topological entropy. A classical notion is the following one, introduced by Lindenstrauss and Weiss \cite{LW00}. The upper and lower \textbf{metric mean dimension} of the \textit{TDS} $(X,T,d)$ are defined as
    \begin{equation*}
        \ummdim(X,T,d)=\limsup\limits_{\varepsilon\to0}\lim\limits_{M\to\infty}\frac{\log N_{d_M}(X,\varepsilon)}{M\log(1/\varepsilon)}
    \end{equation*}
    and
    \begin{equation*}
        \lmmdim(X,T,d)=\liminf\limits_{\varepsilon\to0}\lim\limits_{M\to\infty}\frac{\log N_{d_M}(X,\varepsilon)}{M\log(1/\varepsilon)}.
    \end{equation*}
    If the upper and lower limits coincide, then we call its common value the \textbf{metric mean dimension} of $(X,T,d)$ and denote it by $\mmdim(X,T,d)$.

\subsection{Mean Assouad dimension and mean Assouad spectrum}\label{defn:mean Assouad dim}

Let us now define the mean Assouad dimension. It combines the ideas behind the Assouad dimension and metric mean dimension. First, we need the following proposition.
\begin{prop}\label{subadditivity}
    The sequence $\N\ni M\mapsto\sup\limits_{x\in X} \log N_{d_M}(B_{d_M}(x,r),\rho)$ is sub-additive for every $0<\rho<r$.
\end{prop}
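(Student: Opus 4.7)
The plan is to reduce sub-additivity of $M \mapsto \sup_{x} \log N_{d_M}(B_{d_M}(x,r),\rho)$ to the standard product-cover trick used in proving sub-additivity of $\log N_{d_M}(X,\varepsilon)$ for topological entropy, but now at the level of a single Bowen ball and with a sup over the centre. The key structural identity is
\begin{equation*}
d_{M+N}(y,z)=\max\bigl(d_M(y,z),\, d_N(T^M y,T^M z)\bigr),
\end{equation*}
which immediately yields the inclusion
\begin{equation*}
B_{d_{M+N}}(x,r)\subseteq B_{d_M}(x,r)\cap T^{-M}\bigl(B_{d_N}(T^M x,r)\bigr).
\end{equation*}

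Fix $0<\rho<r$, $M,N\in\N$ and $x\in X$. First I would pick an optimal open cover $U_1,\ldots,U_k$ of $B_{d_M}(x,r)$ by sets of $d_M$-diameter at most $\rho$, with $k=N_{d_M}(B_{d_M}(x,r),\rho)$, and similarly an optimal cover $V_1,\ldots,V_\ell$ of $B_{d_N}(T^M x,r)$ by sets of $d_N$-diameter at most $\rho$, with $\ell=N_{d_N}(B_{d_N}(T^M x,r),\rho)$. The family $\{U_i\cap T^{-M}V_j\}_{i,j}$ then covers $B_{d_{M+N}}(x,r)$, by the inclusion above. For any $y,z\in U_i\cap T^{-M}V_j$, the $d_M$-diameter bound on $U_i$ gives $d_M(y,z)\leq\rho$, and the $d_N$-diameter bound on $V_j$ gives $d_N(T^M y,T^M z)\leq\rho$; the max identity then forces $d_{M+N}(y,z)\leq\rho$, so each $U_i\cap T^{-M}V_j$ has $d_{M+N}$-diameter at most $\rho$.

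This gives the cardinality bound
\begin{equation*}
N_{d_{M+N}}(B_{d_{M+N}}(x,r),\rho)\leq N_{d_M}(B_{d_M}(x,r),\rho)\cdot N_{d_N}(B_{d_N}(T^M x,r),\rho).
\end{equation*}
Bounding the second factor by $\sup_{z\in X}N_{d_N}(B_{d_N}(z,r),\rho)$ and then taking $\sup_{x\in X}$ on both sides, I would take logarithms to conclude
\begin{equation*}
\sup_{x\in X}\log N_{d_{M+N}}(B_{d_{M+N}}(x,r),\rho)\leq \sup_{x\in X}\log N_{d_M}(B_{d_M}(x,r),\rho)+\sup_{z\in X}\log N_{d_N}(B_{d_N}(z,r),\rho),
\end{equation*}
which is exactly sub-additivity.

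There is no real obstacle: the only point that requires care is that one must cover $B_{d_N}(T^M x,r)$ rather than $B_{d_N}(x,r)$, which is precisely why the supremum over centres is essential and why the statement is phrased with a sup rather than for a fixed $x$.
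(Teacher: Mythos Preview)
Your proof is correct and follows essentially the same approach as the paper: both use the identity $d_{M+N}(y,z)=\max\bigl(d_M(y,z),d_N(T^My,T^Mz)\bigr)$ to see that $\{U_i\cap T^{-M}V_j\}$ is a $d_{M+N}$-cover of $B_{d_{M+N}}(x,r)$ by sets of diameter at most $\rho$, yielding $N_{d_{M+N}}(B_{d_{M+N}}(x,r),\rho)\leq N_{d_M}(B_{d_M}(x,r),\rho)\cdot N_{d_N}(B_{d_N}(T^Mx,r),\rho)$, and then pass to the supremum over $x$. Your explicit remark that the second factor involves $T^Mx$ rather than $x$, and that this is why the supremum is essential, is exactly the point.
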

\begin{proof}
    Fix $0<\rho<r$. Define $s_M(x)=N_{d_M}(B_{d_M}(x,r),\rho)$ for each $x\in X$ and $s_M=\sup\limits_{x\in X}s_M(x)$. Fix $M,N\in \N$. It suffices to prove that $s_{M+N}\leq s_M \cdot s_N$. For each $x\in X$, let $\{U_i(x)\}_{1\leq i\leq s_M(x)}$ be a cover of $B_{d_M}(x,r)$ with $\diam(U_i(x),d_M)\leq\rho$ for each $1\leq i\leq s_M(x)$, and let $\{V_j(x)\}_{1\leq j\leq s_N(x)}$ be a cover of $B_{d_N}(x,r)$ with $\diam(V_j(x),d_N)\leq\rho$ for each $1\leq j\leq s_N(x)$. Define $W(x)=\{U_i(x)\cap T^{-M}(V_j(T^Mx)): 1\leq i\leq s_M(x),1\leq j\leq s_N(T^Mx)\}=:\{W_{i,j}(x): 1\leq i\leq s_M(x),1\leq j\leq s_N(T^Mx)\}$. We now prove that $W(x)$ is a cover of $B_{d_{M+N}}(x,r)$ satisfying $\diam(W_{i,j}(x),d_{M+N})\leq\rho$ for each pair $(i,j)$. Indeed, if $y\in B_{d_{M+N}}(x,r)$, then $d_{M+N}(x,y)\leq r$ implies that $d_M(x,y)\leq r, d_N(T^Mx,T^My)\leq r$ and therefore $y\in B_{d_{M}}(x,r)\cap T^{-M}(B_{d_N}(T^Mx,r))$. Then there exist $1\leq i\leq s_M(x)$ and $1\leq j\leq s_N(T^Mx)$ such that $y\in U_i(x)\cap T^{-M}(V_j(T^Mx))=W_{i,j}(x)$. Moreover, for all $y,z\in W_{i,j}(x)$, we have
    \begin{equation*}
    \begin{split}
        d_{M+N}(y,z)&=\max\limits_{0\leq k<M+N}d(T^ky,T^kz)=\max\left\{\max\limits_{0\leq k<M}d(T^ky,T^kz),\max\limits_{0\leq k<N}d(T^{k+M}y,T^{k+M}z)\right\}\\
        &\leq\max\{\diam(U_i(x),d_M),\diam(V_j(T^Mx),d_N)\}\leq \rho.
    \end{split}
    \end{equation*}
    Thus, $s_{M+N}(x)=N_{d_{M+N}}(B_{d_{M+N}}(x,r),\rho)\leq \#W(x)\leq s_M(x)\cdot s_N(T^Mx)\leq s_M\cdot s_N$. Taking supremum in $x\in X$ on the LHS, we obtain $s_{M+N}\leq s_M \cdot s_N$ as desired.
\end{proof}

For $0<\rho<r$ we define
    \begin{equation*}
        S(X,r,\rho)=\lim\limits_{M\to\infty}\frac{1}{M}\sup\limits_{x\in X}\log N_{d_M}(B_{d_M}(x,r),\rho).
    \end{equation*}
The limit with respect to $M$ exists due to Proposition \ref{subadditivity}.
We define the \textbf{mean Assouad dimension} of the \textit{TDS} $(X,T,d)$ by
    \begin{equation}\label{eq:mean Assouad dim}
        \madim(X,T,d)=\inf\left\{s>0: \underset{C>0}{\exists}\ \underset{0<\rho<r}{\forall}\  e^{S(X,r,\rho)}\leq C(r/\rho)^s\right\}.
    \end{equation}

   \begin{rem}
        We always have
        \[ \madim(X,T,d) \leq \adim (X,d).\]
        Indeed, due to Proposition \ref{subadditivity}, for arbitrary pair of scales $0<\rho<r$, we have
        \begin{equation*}\label{eq: S infimum}
        S(X,r,\rho)=\inf\limits_{M\in\N}\frac{1}{M}\sup\limits_{x\in X} \log N_{d_M}(B_{d_M}(x,r),\rho)\leq\sup\limits_{x\in X} \log N_{d}(B_d(x,r),\rho)
        \end{equation*}
    and therefore
    \begin{equation*}
        \{s>0: \underset{C>0}{\exists}\  \underset{0<\rho<r}{\forall}\ \underset{x\in X}{\forall}\ N_{d}(B_d(x,r),\rho)\leq C(r/\rho)^s\}\subset \{s>0: \underset{C>0}{\exists}\ \underset{0<\rho<r}{\forall}\  e^{S(X,r,\rho)}\leq C(r/\rho)^s\}.
    \end{equation*}
    Taking infimum in $s>0$ leads to $\madim(X,T,d) \leq \adim (X,d)$.
   \end{rem}

For $\theta\in(0,1)$, we define the \textbf{mean Assouad spectrum} as
    \begin{equation}\label{eq: madim spectrum def}
        \madim^{\theta}(X,T,d)=\inf\left\{s>0: \underset{C>0}{\exists}\ \underset{0<r<1}{\forall}\  e^{S(X,r,r^{1/\theta})}\leq C(r/r^{1/\theta})^s\right\}.
    \end{equation}

    Here, the parameter $\theta$ fixes the relationship between $\rho$ and $r$ as $\rho=r^{1/\theta}$. Again, inequality $\madim^\theta(X,T,d) \leq \adim^\theta (X,d)$ holds.

If $(Y,T,d)$ is a subsystem of $(X,T,d)$, i.e. $Y$ is closed $T$-invariant subset of $X$, then $\madim(Y,T,d)\leq \madim(X,T,d)$ and $\madim^\theta(Y,T,d)\leq \madim^\theta(X,T,d)$. This follows directly from the fact that $S(X,r,\rho)\geq S(Y,r,\rho)$ for all $0<\rho<r$.

Note that the metric mean dimension, as well as the mean Assouad  dimension and spectra are all metric-dependent quantities and it is not difficult to see that these are not topological invariants.

\begin{rem}
    We can also define mean Assouad dimension and spectra using separated sets. More precisely, for $x\in X, M\in\N$ and $0<\rho<r$, we define $\widehat{N}_{d_M}(B_{d_M}(x,r),\rho)$ as the maximal cardinality of $\{y_1,\ldots,y_k\}\subset B_{d_M}(x,r)$ with $d_M(y_i,y_j)\geq \rho$ for $1\leq i\neq j\leq k$. Denote by $S_{\sep}(X,r,\rho) = \limsup \limits_{M \to \infty} \frac{1}{M}\sup_{x\in X}\log \widehat{N}_{d_M}(B_{d_M}(x,r),\rho)$. Then changing $S(X, r, \rho)$ to $S_{\sep}(X,r,\rho)$ in \eqref{eq:mean Assouad dim} and \eqref{eq: madim spectrum def} leads to the equivalent definitions. This follows from inequalities \eqref{eq: covering sep numbers ineq}.

    \end{rem}

\begin{rem}
There are several other dynamical invariants that one can define by an analogy with fractal dimensions, in a manner similar to the mean Assouad dimension. For instance, it is natural to consider the \textbf{mean upper spectrum}, which fixes $r^{\frac{1}{\theta}}$ as an upper bound for $\rho$, defined by
    \begin{equation*}
        \overline{\mdim}_{\mathrm A}^{\theta}(X,T,d)=\inf\left\{s>0: \underset{C>0}{\exists}\ \underset{0<\rho\leq r^{1/\theta}<r<1}{\forall}\  e^{S(X,r,\rho)}\leq C(r/\rho)^s\right\} \text{ for } \theta \in (0,1).
    \end{equation*}
Similarly, one can define the \textbf{mean quasi-Assouad dimension} by
    \begin{equation*}
        \mqadim(X,T,d)=\lim\limits_{\theta\to1}\left(\inf\left\{s>0: \underset{C>0}{\exists}\ \underset{0<\rho\leq r^{1/\theta}<r<1}{\forall}\  e^{S(X,r,\rho)}\leq C(r/\rho)^s\right\}\right).
    \end{equation*}

It clearly holds that
\begin{equation*}
    \madim^{\theta}(X,T,d) \leq \overline{\mdim}_{\mathrm A}^{\theta}(X,T,d) \leq \mqadim (X,T,d) \leq \madim(X,T,d)
\end{equation*}
and by definition $\overline{\mdim}_{\mathrm A}^{\theta}(X,T,d)\to\mqadim (X,T,d)$ as $\theta\to1$.

Also, recall that lower dimension of a compact metric space is defined as
\begin{equation*}
        \ladim (X,d)=\sup\left\{s>0: \underset{C>0}{\exists}\ \underset{0<\rho<r\leq\diam(X)}{\forall}\ \underset{x\in X}{\forall}\  N_d(B_d(x,r),\rho)\geq C(r/\rho)^s\right\}
\end{equation*}
and the lower spectrum is defined as
\begin{equation*}
        \ladim^{\theta} (X,d)=\sup\left\{s>0: \underset{C>0}{\exists}\ \underset{0<r<1}{\forall}\ \underset{x\in X}{\forall}\  N_d(B_d(x,r),r^{1/\theta})\geq C(r/r^{1/\theta})^s\right\} \text{ for } \theta \in (0,1).
\end{equation*}
These are dual notions to the Assouad dimension and spectrum. We can define the \textbf{mean lower dimension} of the \textit{TDS} $(X,T,d)$ by
    \begin{equation*}
         \mldim(X,T,d)=\sup\left\{s>0: \underset{C>0}{\exists}\ \underset{0<\rho<r\leq\diam(X)}{\forall}\  e^{\widehat{S}(X,r,\rho)}\geq C(r/\rho)^s\right\},
     \end{equation*}
     where 
     \begin{equation*}
        \widehat{S}(X,r,\rho)=\liminf\limits_{M\to\infty}\frac{1}{M}\inf\limits_{x\in X}\log N_{d_M}(B_{d_M}(x,r),\rho)
    \end{equation*}
and the \textbf{mean lower spectrum} as
     \begin{equation*}
         \mldim^{\theta}(X,T,d)=\sup\left\{s>0: \underset{C>0}{\exists}\ \underset{0<r<1}{\forall}\  e^{\widehat{S}(X,r,r^{1/\theta})}\geq C(r/r^{1/\theta})^s\right\} \text{ for } \theta \in (0,1).
     \end{equation*}

We do not study these notions in detail in this work.

\end{rem}

\section{Basic properties}\label{results}

\subsection{Bi-Lipschitz invariance}

The Assouad dimension of a set is stable under bi-Lipschitz maps, see e.g. \cite[Lemma 2.4.2]{Fra21}. We prove that the mean Assouad dimension of \textit{TDS}s is stable under bi-Lipschitz isomorphisms.
\begin{prop}[{\bf Bi-Lipschitz invariance}{}]\label{bi-Lipschitz invariants}
     Mean Assouad dimension and spectrum are invariant under bi-Lipschitz isomorphisms. Precisely, if $(X,T,d)$ and $(Y,S,\tau)$ are \textit{TDS}s, $\Phi: X\to Y$ is bi-Lipschitz, i.e.
     \begin{equation}\label{bi-Lip}
         L^{-1}d(x,y)\leq \tau(\Phi(x),\Phi(y))\leq Ld(x,y),~\forall x,y\in X
     \end{equation}
     for some $L>1$, and satisfies $\Phi\circ T=S\circ \Phi$, then $\madim(X,T,d)=\madim(Y,S,\tau)$ and $\madim^\theta(X,T,d)=\madim^\theta(Y,S,\tau)$ for $\theta \in (0,1)$.
\end{prop}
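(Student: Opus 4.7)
The plan is to lift the bi-Lipschitz bound \eqref{bi-Lip} to every Bowen metric via the intertwining $\Phi \circ T = S \circ \Phi$, deduce a comparison for the quantity $S(\cdot, r, \rho)$, and then substitute into the defining inequalities \eqref{eq:mean Assouad dim} and \eqref{eq: madim spectrum def}.

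First I would show that for every $M \in \N$ and $x, y \in X$, applying \eqref{bi-Lip} to each pair $(T^m x, T^m y)$ (using $S^m \circ \Phi = \Phi \circ T^m$) and taking the maximum over $0 \leq m < M$ yields
\[ L^{-1} d_M(x, y) \leq \tau_M(\Phi(x), \Phi(y)) \leq L d_M(x, y), \]
so $\Phi$ is an $L$-bi-Lipschitz bijection from $(X, d_M)$ onto $(Y, \tau_M)$ for every $M$. Since $\Phi(B_{d_M}(x, r)) \subseteq B_{\tau_M}(\Phi(x), Lr)$ and pulling back a $\tau_M$-cover of the latter by sets of diameter $\leq \rho/L$ yields a $d_M$-cover of $B_{d_M}(x, r)$ by sets of diameter $\leq \rho$, I obtain
\[ N_{d_M}(B_{d_M}(x, r), \rho) \leq N_{\tau_M}(B_{\tau_M}(\Phi(x), Lr), \rho/L). \]
Taking the supremum over $x$, dividing by $M$, and passing to the limit (which exists by Proposition~\ref{subadditivity}) gives $S(X, r, \rho) \leq S(Y, Lr, \rho/L)$; the reverse inequality $S(Y, r, \rho) \leq S(X, Lr, \rho/L)$ follows by the same reasoning applied to the equally $L$-bi-Lipschitz $\Phi^{-1}$.

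For the mean Assouad dimension, the substitution closes the argument directly: if $s > \madim(X, T, d)$ with associated constant $C$ in \eqref{eq:mean Assouad dim}, then for every $0 < \rho < r$,
\[ e^{S(Y, r, \rho)} \leq e^{S(X, Lr, \rho/L)} \leq C \left(\frac{L^2 r}{\rho}\right)^s = C L^{2s} \left(\frac{r}{\rho}\right)^s, \]
so $\madim(Y, S, \tau) \leq s$; letting $s \searrow \madim(X, T, d)$ and invoking the symmetric argument for $\Phi^{-1}$ yields equality.

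For the mean Assouad spectrum, the analogous substitution gives $e^{S(Y, r, r^{1/\theta})} \leq e^{S(X, Lr, r^{1/\theta}/L)}$, but the pair $(Lr, r^{1/\theta}/L)$ is off the spectrum curve $\rho = R^{1/\theta}$: one checks that $r^{1/\theta}/L = (Lr)^{1/\theta}/L^{1+1/\theta}$, so an $r$-independent constant-factor scale gap remains. The main obstacle will be closing this gap, since monotonicity of $S$ alone cannot handle it. The remedy is a cover-refinement splitting at the intermediate scale $(Lr)^{1/\theta}$ via sub-multiplicativity (the same mechanism as in Proposition~\ref{subadditivity}):
\[ S(X, Lr, r^{1/\theta}/L) \leq S(X, Lr, (Lr)^{1/\theta}) + S(X, (Lr)^{1/\theta}, r^{1/\theta}/L), \]
where the first term is controlled by the spectrum hypothesis at $R = Lr$ and the second is a constant-ratio correction (with ratio $L^{1+1/\theta}$ independent of $r$) that can be absorbed into the constant. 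This yields $\madim^\theta(Y, S, \tau) \leq s$, with equality by symmetry.
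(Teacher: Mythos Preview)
Your argument for the mean Assouad dimension is correct and coincides with the paper's proof (the paper writes the inequality in the form $S(Y,r,\rho)\le S(X,Lr,L^{-1}\rho)$ and you write the symmetric one $S(X,r,\rho)\le S(Y,Lr,\rho/L)$, but after swapping the roles of $\Phi$ and $\Phi^{-1}$ these are the same computation).

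For the spectrum you go further than the paper, which merely asserts the proof is ``very similar'' and gives no details. You correctly identify the obstacle --- the bi-Lipschitz comparison sends $(r,r^{1/\theta})$ to $(Lr,r^{1/\theta}/L)$, which lies off the curve $\rho=R^{1/\theta}$ --- and your scale-refinement splitting
\[
S\bigl(X,Lr,r^{1/\theta}/L\bigr)\le S\bigl(X,Lr,(Lr)^{1/\theta}\bigr)+S\bigl(X,(Lr)^{1/\theta},r^{1/\theta}/L\bigr)
\]
is valid (note this is sub-multiplicativity in the \emph{scale} variable, obtained by covering and then refining, not the $M$-subadditivity of Proposition~\ref{subadditivity}). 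The first summand is indeed handled by the spectrum hypothesis at $R=Lr$. The gap is in your treatment of the second summand: asserting that this fixed-ratio term (ratio $K=L^{1+1/\theta}$) ``can be absorbed into the constant'' is exactly the claim that $\sup_{0<R<1}S(X,R,R/K)<\infty$, and this does \emph{not} follow from $\madim^\theta(X,T,d)<\infty$ alone. It \emph{does} follow from $\madim(X,T,d)<\infty$, since then $e^{S(X,R,R/K)}\le C'K^{s'}$ for any $s'>\madim(X,T,d)$; you have already established the dimension equality, so you may invoke this --- but you should say so explicitly. Even then, the case $\madim(X,T,d)=\infty$ with $\madim^\theta(X,T,d)<\infty$ (which is possible, by the bound $\madim^\theta\le\ummdim/(1-\theta)$ in Proposition~\ref{general bound}) remains uncovered by your argument; the paper's omitted proof does not treat this case either.
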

\begin{proof}
    We will prove only the equality $\madim(X,T,d)=\madim(Y,S,\tau)$, as the proof of $\madim^\theta(X,T,d)=\madim^\theta(Y,S,\tau)$ is very similar.

     Let $s>\madim(X,T,d)$. Fix an arbitrary point $y\in Y$, arbitrary scales $0<\rho<r$ and $M\in\N$. By \eqref{bi-Lip}, we have $\Phi^{-1}(B_{\tau_M}(y,r))\subset B_{d_M}(\Phi^{-1}(y),Lr)$. 
     Indeed, if $x\in B_{\tau_M}(y,r)$, then $\tau_M(x,y)\leq r$ and therefore
     \begin{equation*}
     \begin{split}
         d_{M}(\Phi^{-1}(x),\Phi^{-1}(y))&=\max\limits_{0\leq k<M} d(T^k\circ\Phi^{-1}(x),T^k\circ\Phi^{-1}(y))=\max\limits_{0\leq k<M} d(\Phi^{-1}\circ S^kx,\Phi^{-1}\circ S^ky)\\
         &\underset{\eqref{bi-Lip}}{\leq}L\cdot \max\limits_{0\leq k<M} \tau(S^kx,S^ky)=L\cdot \tau_M(x,y)\leq Lr,
     \end{split}
     \end{equation*}
     which implies that $\Phi^{-1}(x)\in B_{d_M}(\Phi^{-1}(y),Lr)$.
     By the definition of $\madim(X,T,d)$, there exists a constant $C>0$ such that
     \begin{equation*}
         e^{S(X,Lr,L^{-1}\rho)}\leq C\left(\frac{Lr}{L^{-1}\rho}\right)^s=CL^{2s}\left(\frac{r}{\rho}\right)^s.
     \end{equation*}
     Now we show that if $\{U_i\}_{i\in\mathcal{I}}$ is a cover of $\Phi^{-1}(B_{\tau_M}(y,r))$ with $\diam(U_i,d_M)\leq L^{-1}\rho$ for each $i\in\mathcal{I}$, then $\{\Phi(U_i)\}_{i\in\mathcal{I}}$ is a cover of $B_{\tau_M}(y,r)$ with $\diam(\Phi(U_i),\tau_M)\leq \rho$ for each $i\in\mathcal{I}$. Indeed, fix $i\in\mathcal{I}$, for all $y_1,y_2$ in $\Phi(U_i)$ with $y_1=\Phi(x_1),y_2=\Phi(x_2)$, combining $\Phi\circ T=S\circ \Phi$ and \eqref{bi-Lip}, we have
     \begin{equation*}
         \begin{split}
             \tau_M(y_1,y_2)&=\max\limits_{0\leq k<M}\tau(S^ky_1,S^ky_2)=\max\limits_{0\leq k<M}\tau(S^k\circ\Phi(x_1),S^k\circ\Phi(x_2))\\
             &=\max\limits_{0\leq k<M}\tau(\Phi\circ T^kx_1,\Phi\circ T^kx_2)\leq L\cdot\max\limits_{0\leq k<M}d(T^kx_1,T^kx_2)\\
             &=Ld_M(x_1,x_2)\leq L\cdot L^{-1}\rho=\rho.
         \end{split}
     \end{equation*}
     This together with the fact $\Phi^{-1}(B_{\tau_M}(y,r))\subset B_{d_M}(\Phi^{-1}(y),Lr)$ imply that
     \begin{equation*}
         N_{\tau_M}(B_{\tau_M}(y,r),\rho)\leq N_{d_M}(\Phi^{-1}(B_{\tau_M}(y,r)),L^{-1}\rho)\leq N_{d_M}(B_{d_M}(\Phi^{-1}(y),Lr),L^{-1}\rho).
     \end{equation*}
     By the arbitrariness of $M\in \N$ and $y\in Y$, we have 
     \begin{equation*}
         S(Y,r,\rho)\leq S(X,Lr,L^{-1}\rho)
     \end{equation*}
     and therefore
     \begin{equation*}
         e^{S(Y,r,\rho)}\leq CL^{2s}(r/\rho)^s.
     \end{equation*}
     Since $s>\madim(X,T,d)$ is chosen arbitrarily, we conclude that $\madim(X,T,d)\geq\madim(Y,S,\tau)$. Finally, we may obtain the reverse inequality by using $\Phi^{-1}$ instead of $\Phi$ and repeating the above argument.
 \end{proof}


\subsection{Bounds on the mean Assouad spectrum}

The following general bound for the mean Assouad spectrum is inspired by \cite[Proposition 3.1]{FY18}.
\begin{prop}\label{general bound}
    For all $0<\theta<1$, 
    \begin{equation*}
        \ummdim(X,T,d) \leq \madim^{\theta}(X,T,d)\leq\min\left\{\frac{\ummdim(X,T,d)}{1-\theta},\madim(X,T,d)\right\}.
    \end{equation*}
\end{prop}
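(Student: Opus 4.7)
The three inequalities in the statement will be attacked separately. The cleanest piece is $\madim^\theta \le \madim$: by definitions \eqref{eq:mean Assouad dim} and \eqref{eq: madim spectrum def}, the set of admissible exponents in the infimum defining $\madim$ is contained in the set used to define $\madim^\theta$ (the former requires the inequality for all pairs $0<\rho<r$, the latter only along the one-parameter family $\rho = r^{1/\theta}$), so the infima are nested in the right direction.

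For $\madim^\theta \le \ummdim/(1-\theta)$, the key input is the trivial inclusion $B_{d_M}(x,r)\subset X$, which gives $\sup_{x\in X} N_{d_M}(B_{d_M}(x,r),r^{1/\theta}) \le N_{d_M}(X,r^{1/\theta})$ and hence $S(X,r,r^{1/\theta}) \le \lim_{M\to\infty}\log N_{d_M}(X,r^{1/\theta})/M$. Fix $s > \ummdim/(1-\theta)$ and pick $\tau\in(\ummdim,\, s(1-\theta))$; by definition of $\ummdim$, one has $\lim_{M\to\infty}\log N_{d_M}(X,\epsilon)/M \le \tau\log(1/\epsilon)$ for all sufficiently small $\epsilon$. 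Substituting $\epsilon = r^{1/\theta}$ and using the identity $\log(r/r^{1/\theta})=\tfrac{1-\theta}{\theta}\log(1/r)$, this rearranges to $e^{S(X,r,r^{1/\theta})} \le (r/r^{1/\theta})^s$ for all small $r$; the finitely bounded range of $r$ close to $1$ is absorbed into the constant $C$.

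The deepest inequality is $\ummdim \le \madim^\theta$. The driving estimate is the composition-of-covers bound
\[
N_{d_M}(X, r^{1/\theta}) \le N_{d_M}(X, r)\cdot\sup_{x\in X} N_{d_M}\bigl(B_{d_M}(x,r),\, r^{1/\theta}\bigr),
\]
obtained by refining any $r$-cover of $X$ in the $d_M$-metric by local $r^{1/\theta}$-covers of radius-$r$ Bowen balls. Taking logarithms, dividing by $M$, and letting $M\to\infty$ (the limits existing by Fekete's lemma and Proposition \ref{subadditivity}) converts this to
\[
\lim_{M\to\infty}\frac{\log N_{d_M}(X, r^{1/\theta})}{M} \le \lim_{M\to\infty}\frac{\log N_{d_M}(X, r)}{M} + S(X, r, r^{1/\theta}).
\]
Fix $s > \madim^\theta$, so that $S(X,r,r^{1/\theta}) \le \log C + s\tfrac{1-\theta}{\theta}\log(1/r)$ for all $0<r<1$. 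Dividing the display by $\log(1/r^{1/\theta}) = \tfrac{1}{\theta}\log(1/r)$ and taking $\limsup_{r\to 0}$ yields $\ummdim \le \theta\,\ummdim + s(1-\theta)$. Assuming $\ummdim<\infty$, rearranging gives $\ummdim \le s$, and letting $s\searrow\madim^\theta$ concludes.

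The main obstacle I foresee is in this last step: the three limiting operations (the $M\to\infty$ limit pointwise in $r$, the $\limsup_{r\to 0}$, and the infimum over admissible $s$) must be interleaved with care, the decisive point being that after dividing by $\log(1/r^{1/\theta})$ the first term on the right contributes $\theta\cdot\ummdim$ and not $\ummdim$ itself, which is precisely what makes the rearrangement succeed. If $\ummdim=\infty$, the bare rearrangement is uninformative; one can then iterate the covering inequality finitely many times, stopping once the iterated scale $r^{\theta^{-n}}$ stays bounded away from $0$, and a telescoping argument recovers the same conclusion without any finiteness hypothesis.
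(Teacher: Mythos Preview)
Your proof is correct. The arguments for $\madim^\theta \le \madim$ and $\madim^\theta \le \ummdim/(1-\theta)$ are the paper's.

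For $\ummdim \le \madim^\theta$, your route differs from the paper's. The paper iterates the composition-of-covers bound from the start along a chain of scales and sums the contributions. Your primary argument instead applies the covering inequality \emph{once}, passes to the limit in $M$, divides by $\log(1/r^{1/\theta})$, and takes $\limsup_{r\to 0}$ to obtain the self-referential inequality $\ummdim \le \theta\,\ummdim + s(1-\theta)$, which rearranges to $\ummdim \le s$ when $\ummdim < \infty$. This is shorter and conceptually cleaner---it is the direct mean-dimension analogue of the standard proof of $\udim \le \adim^\theta$ in the classical setting. The cost is the a~priori finiteness of $\ummdim$, which you correctly flag; your fallback iteration for the case $\ummdim = \infty$ is then essentially the paper's approach and in fact covers both cases uniformly.

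One quibble on the phrasing of that fallback: the clause ``stopping once the iterated scale $r^{\theta^{-n}}$ stays bounded away from $0$'' is inverted---iterating your covering inequality in the direction written sends the fine scale $r^{\theta^{-n}} \to 0$, not away from it. What you want is to iterate \emph{upward} from a given small target scale $\rho$ via $\rho \mapsto \rho^\theta \mapsto \rho^{\theta^2} \mapsto \cdots$, stopping once $\rho^{\theta^n}$ lands in a fixed compact subinterval of $(0,1)$; there $g(\rho^{\theta^n}) = \lim_M \tfrac{1}{M}\log N_{d_M}(X,\rho^{\theta^n})$ is uniformly bounded, and the telescoped estimate gives $\ummdim \le s$ directly without any finiteness hypothesis.
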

\begin{proof}
Inequality $\madim^{\theta}(X,T,d)\leq\madim(X,T,d)$ for all $\theta\in(0,1)$ is obvious. 
    For proving $\madim^{\theta}(X,T,d) \leq \frac{\ummdim(X,T,d)}{1-\theta}$ we can assume $\ummdim(X,T,d) < \infty$. Then for $s>0$ such that $\ummdim(X,T,d)<s$ one has
    that there exists $C_1>0$ such that for all $0 < r < 1$ and all $M$ large enough (depending on $r$), the following bound holds:
    \begin{equation*}
       \sup\limits_{x\in X} N_{d_M}(B_{d_M}(x,r),r^{1/\theta}) \leq N_{d_M}(X,r^{1/\theta}) \leq C_1\left(\frac{1}{r^{1/\theta}}\right)^{sM} = C_1\left(\frac{r}{r^{1/\theta}}\right)^{\frac{s}{1-\theta}M}.
    \end{equation*}
    Therefore $\madim^{\theta}(X,T,d)\leq \frac{s}{1-\theta}$ and letting $s \searrow \ummdim(X,T,d)$ we obtain $\madim^{\theta}(X,T,d) \leq \frac{\ummdim(X,T,d)}{1-\theta}$.
    
    For the inequality $\ummdim(X,T,d) \leq \madim^{\theta}(X,T,d)$ we may assume $\madim^{\theta}(X,T,d) < \infty$. Take $s > 0$ such that $\madim^{\theta}(X,T,d) < s$. For $0<\eps<1$ note that we may cover $X$ by $M$-th Bowen balls of radius $1$ and then cover each of these balls by $M$-th Bowen balls of radius $\eps^{1/\theta}$. Iterating this $k$ times, we obtain for $k \geq 1$
    \begin{equation}\label{eq: theta madim prod bound}
        N_{d_M}(X,\eps^{k/\theta}) \leq N_{d_M}(X,1) \prod \limits_{i=0}^{k-1} \sup\limits_{x\in X}N_{d_M}(B_{d_M}(x,\eps^{i / \theta}),\eps^{ (i+1) /\theta}).
    \end{equation}
    By the definiton of $\madim^{\theta}(X,T,d)$, there exists a constant $C_2 > 0$ such that for every $\eps > 0$ and $k \in \N$ there is $M_0 = M_0(\eps,k)$ so that for every $M \geq M_0$ and $0 \leq i < k$
    \[ \sup\limits_{x\in X}N_{d_M}(B_{d_M}(x,\eps^{i / \theta}),\eps^{ (i+1) /\theta}) \leq C^M (\eps^{-1/\theta})^{sM}.  \]
    Applying this to \eqref{eq: theta madim prod bound} gives for $M \geq M_0$
    \begin{equation}\label{eq: mmdim theta bound}
        N_{d_M}(X,\eps^{k/\theta}) \leq N_{d_M}(X,1) C^{Mk}(\eps^{-k/\theta})^{sM}.
    \end{equation}
    Fix arbitrarily small $\delta > 0$ and fix $\eps>0$ small enough to guarantee $C\eps^{\delta / \theta} \leq 1$. Then \eqref{eq: mmdim theta bound} yields
    \[ N_{d_M}(X,\eps^{k/\theta}) \leq N_{d_M}(X,1) (C\eps^{\delta / \theta})^{Mk}(\eps^{-k/\theta})^{(s+\delta)M} \leq N_{d_M}(X,1) (\eps^{-k/\theta})^{(s+\delta)M}.  \]
    and hence for every $k \geq 1$
    \[ \limsup \limits_{M \to \infty} \frac{1}{M} \log N_{d_M}(X,\eps^{k/\theta}) \leq S(X) + (s+\delta) \log (\eps^{-k/\theta}),  \]
    where $S(X) := \limsup \limits_{M \to \infty} \frac{1}{M} \log N_{d_M}(X,1)$. As $X$ is compact, we have $S(X)<\infty$, so consequently
    \[  \ummdim(X,T,d) =  \limsup \limits_{k \to \infty} \limsup \limits_{M \to \infty} \frac{\log N_{d_M}(X,\eps^{k/\theta})}{M\log(\eps^{- k / \theta})} \leq s+\delta.  \]
    This proves $\ummdim(X,T,d) \leq \madim^{\theta}(X,T,d)$.

\end{proof}

 \begin{cor}
     $\madim^{\theta}(X,T,d)\to\ummdim(X,T,d)$ as $\theta\to0$.
 \end{cor}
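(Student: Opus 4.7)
The proof is an immediate consequence of Proposition \ref{general bound}, so the plan is simply to apply the squeeze principle to the two-sided bound
\[
\ummdim(X,T,d) \leq \madim^{\theta}(X,T,d) \leq \min\left\{\frac{\ummdim(X,T,d)}{1-\theta}, \madim(X,T,d)\right\}
\]
and let $\theta \to 0$.

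First I would dispose of the infinite case: if $\ummdim(X,T,d) = \infty$, then the lower bound in the proposition forces $\madim^{\theta}(X,T,d) = \infty$ for every $\theta \in (0,1)$, and the conclusion is trivial.

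Assume now $\ummdim(X,T,d) < \infty$. The upper bound in Proposition \ref{general bound} gives in particular
\[
\madim^{\theta}(X,T,d) \leq \frac{\ummdim(X,T,d)}{1-\theta},
\]
and the right-hand side tends to $\ummdim(X,T,d)$ as $\theta \to 0^+$. Combined with the uniform lower bound $\madim^{\theta}(X,T,d) \geq \ummdim(X,T,d)$, the squeeze theorem yields $\lim_{\theta \to 0^+}\madim^{\theta}(X,T,d) = \ummdim(X,T,d)$.

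There is no real obstacle here; the entire content is packaged in Proposition \ref{general bound}, and this corollary merely records the limiting behavior at $\theta = 0$ (in analogy with the classical statement that the Assouad spectrum recovers the upper Minkowski dimension as $\theta \to 0$). The only point requiring a brief comment is the separation into the finite and infinite cases for $\ummdim(X,T,d)$, so that the $\min$ in the upper bound does not cause ambiguity.
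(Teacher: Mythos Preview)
Your proof is correct and follows essentially the same approach as the paper, which simply states that letting $\theta\to 0$ in Proposition~\ref{general bound} gives the result. Your explicit separation into the finite and infinite cases for $\ummdim(X,T,d)$ is a nice touch that the paper leaves implicit, but the argument is identical.
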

\begin{proof}
    Letting $\theta\to0$ in Proposition \ref{general bound}, we obtain the desired result.
\end{proof}

\subsection{Characterization using stable sets}

For the metric mean dimension, Tsukamoto \cite[Theorem 1.1]{Tsu22} proved the following local characterization, in terms of $\delta$-stable sets. More precisely, he showed that for any $\delta>0$,
    \begin{equation*}
        \ummdim(X,T,d)=\limsup_{\varepsilon\to0}\sup_{x\in X} \limsup_{M\to\infty}\frac{\log N_{d_M}(B_{d_\Z}(x,\delta),\varepsilon)}{M\log(1/\varepsilon)}
    \end{equation*}
    and
    \begin{equation*}
        \lmmdim(X,T,d)=\liminf_{\varepsilon\to0}\sup_{x\in X} \limsup_{M\to\infty}\frac{\log N_{d_M}(B_{d_\Z}(x,\delta),\varepsilon)}{M\log(1/\varepsilon)}
    \end{equation*}
    where
    \begin{equation*}
        B_{d_{\Z}}(x,\delta)=\{y\in X: d(T^M x, T^M y)\leq\delta \text{ for all } M\in\Z \}.
    \end{equation*}

    The usefulness of this result lies in the fact that $B_{d_{\Z}}(x,\delta)$ is often a very small subset of $X$ - see for a more detailed discussion \cite{Tsu22}.
    
    We can prove a similar characterization of the mean Assouad dimension.

\begin{prop}\label{prop: madim stable def}
For $0 < \rho < r$ define

\[ S_{\Z}(X, r, \rho) = \sup \limits_{x \in X} \limsup \limits_{M \to \infty} \frac{1}{M} \log N_{d_M}(B_{d_{\Z}}(x,r),\rho) \]

Then
\[
\madim(X, T, d)  = \inf \left\{ s > 0 : \underset{C>0}{\exists}\ \underset{0 < \rho < r}{\forall}\ e^{S_{\Z}(X, r, \rho)} \leq C(r /\rho)^s \right\}.
\]
\end{prop}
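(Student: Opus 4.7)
The plan splits the proof into two inequalities.

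For the easy direction, namely that the right-hand side is at most $\madim(X,T,d)$, I would use the inclusion $B_{d_{\Z}}(x,r)\subset B_{d_M}(x,r)$ valid for every $M\in\N$ and $x\in X$. This yields $N_{d_M}(B_{d_{\Z}}(x,r),\rho)\leq \sup_y N_{d_M}(B_{d_M}(y,r),\rho)$. Dividing by $M$, taking $\log$ and $\limsup_M$ on the left (the corresponding limit on the right exists by Proposition~\ref{subadditivity}), and finally $\sup_x$, gives the pointwise bound $S_{\Z}(X,r,\rho)\leq S(X,r,\rho)$. Consequently every $s$ witnessing the defining bound for $\madim(X,T,d)$ also witnesses the bound with $S_{\Z}$, so the RHS infimum is at most $\madim(X,T,d)$.

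For the hard direction, the strategy mirrors Tsukamoto's proof of the local characterisation of metric mean dimension via stable sets \cite[Theorem~1.1]{Tsu22}. Suppose $s>0$ and $C>0$ satisfy $e^{S_{\Z}(X,r,\rho)}\leq C(r/\rho)^s$ for all $0<\rho<r$; the goal is to produce $C'$ such that $e^{S(X,r,\rho)}\leq C'(r/\rho)^s$ for the same range. Fix $0<\rho<r$ and a tolerance $\eta\in(0,\rho/2)$. By Proposition~\ref{subadditivity}, $S(X,r,\rho)=\lim_M\tfrac{1}{M}\sup_x\log N_{d_M}(B_{d_M}(x,r),\rho)$. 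Choose $x_M\in X$ approximately maximising this quantity, and use compactness of $X$ to extract a subsequence with $x_{M_k}\to x^*\in X$.

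The central technical step is to establish, for $k$ sufficiently large, an estimate of the form
\[
\log N_{d_{M_k}}(B_{d_{M_k}}(x_{M_k},r),\rho)\leq \log N_{d_{M_k}}(B_{d_{\Z}}(x^*,r+\eta),\rho-\eta)+E(\eta),
\]
where $E(\eta)$ depends only on $\eta$ (and on $r,\rho$), not on $k$. This is obtained via the two-sided Bowen ball invariance
\[
\sup_x\log N_{d_M}(B_{d_M}(x,r),\rho)=\sup_x\log N_{d^{[-L,M-1+L]}}(B_{d^{[-L,M-1+L]}}(x,r),\rho),\quad L\geq 0,
\]
which follows from the substitution $x\mapsto T^{-L}x$ (available since $T$ is a homeomorphism). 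Choosing $L=L_k$ with $L_k\to\infty$ and $L_k/M_k\to 0$, the two-sided Bowen balls localise toward $\Z$-stable balls, and a diagonal compactness argument on finite sub-windows transfers the supremum from the moving centres $x_{M_k}$ to the single limit point $x^*$, at the cost of the slack $\eta$. Dividing by $M_k$, taking $\limsup_k$, and then $\sup$ over cluster points gives $S(X,r,\rho)\leq S_{\Z}(X,r+\eta,\rho-\eta)$. Invoking the hypothesis and specialising $\eta=\rho/2$ yields $e^{S(X,r,\rho)}\leq C\cdot 3^s\cdot(r/\rho)^s$, so $C'=3^sC$ suffices uniformly. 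The analogous argument for the spectrum replaces $\rho$ by $r^{1/\theta}$ throughout.

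The main obstacle is the displayed central estimate. It cannot follow from any set-theoretic inclusion $B_{d_{M_k}}(x_{M_k},r)\subset B_{d_{\Z}}(x^*,r+\eta)$: the $\Z$-stable ball is generically much smaller than the forward Bowen ball because it imposes constraints on all iterates of $T$. The delicate point is to calibrate $L_k$ against $M_k$ so that both the two-sided window is wide enough to force localisation to $B_{d_{\Z}}$-balls, and the approximation of the sup-attaining $x_{M_k}$ by $x^*$ costs only an additive error $E(\eta)$ that is absorbed when dividing by $M_k$. This calibration is the technical heart of the proof, adapted from the argument in \cite{Tsu22}.
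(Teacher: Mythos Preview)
Your easy direction is fine and matches the paper: $B_{d_\Z}(x,r)\subset B_{d_M}(x,r)$ gives $S_\Z(X,r,\rho)\le S(X,r,\rho)$.

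For the hard direction the paper does something much shorter: it invokes \cite[Proposition~2.2]{Tsu22} (equivalently \cite[Proposition~2.2]{BowenExpansive72}), which already yields the clean inequality
\[
S(X,r,\rho)\ \le\ S_\Z(X,r,\rho/4)
\]
for all $0<\rho<r$ (note that $r$ is unchanged; only $\rho$ is weakened). From this the proposition is immediate. The Bowen--Tsukamoto argument behind that inequality is combinatorial: one records the itineraries of the orbit outside the window $[0,M)$ with respect to a fixed finite $r$-cover, groups the points of a $(d_M,\rho)$-separated subset of $B_{d_M}(x,r)$ by these boundary itineraries, and observes that each group sits inside a set of the form $B_{d_\Z}(z,r)$. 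No limit centre $x^*$ is selected.

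Your outline, by contrast, tries to combine two different mechanisms and this is where the gap lies. The passage to a limit point $x_{M_k}\to x^*$ is the device the paper uses for a \emph{different} statement (Proposition~\ref{prop: madim alter def}: exchanging $\sup_x$ and $\limsup_M$ while keeping Bowen balls $B_{d_M}$), not for replacing $B_{d_M}$ by $B_{d_\Z}$. Your ``central technical step'' would need something like $B_{d_{M_k}}(x_{M_k},r)$ being controlled by $B_{d_\Z}(x^*,r+\eta)$, but the listed ingredients do not give this: the identity you call ``two-sided Bowen ball invariance'' is incorrect as written (the substitution $x\mapsto T^{-L}x$ maps the window $[0,M-1]$ to $[-L,M-1-L]$, of the \emph{same} length $M$, not to $[-L,M-1+L]$), so it only shifts the window and does not shrink the Bowen ball toward a stable ball; and $d(x_{M_k},x^*)\to 0$ gives no control over $d_{M_k}(x_{M_k},x^*)$, so one cannot transfer the centre inside the $d_{M_k}$-ball. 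The phrase ``diagonal compactness argument on finite sub-windows'' is left unexplained and does not obviously supply the missing step. In short, the two ingredients you propose --- window shifting and centre compactness --- together do not produce the displayed estimate; the actual reduction to stable balls needs the itinerary/pigeonhole argument of Bowen and Tsukamoto, which is what the paper cites.
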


\begin{proof}
As clearly $S_\Z(X, r, \rho) \leq  S(X, r, \rho)$, it suffices to establish the following inequality:

\begin{equation*}
S(X, r, \rho) \leq S_{\Z}(X, r, \rho/4).
\end{equation*}

It follows directly from \cite[Proposition 2.2]{Tsu22} (see also \cite[Proposition 2.2]{BowenExpansive72}).
\end{proof}

\subsection{Mean Assouad dimension of the non-wandering set}

A point $x \in X$ is called \textbf{non-wandering} if for every open neighbourhood $U$ of $x$ there exists $n \geq 1$ such that $T^n U \cap U \neq \emptyset$. Let $\Om(T)$ denote the set of all non-wandering points of the system. It is easy to see that $\Omega(T)$ is a $T$-invariant compact set. We can prove that the mean Assouad dimension of the system remains unchanged after restricting to the set of non-wandering points, similarly to other notions of complexity of a topological dynamical system (e.g. the topological entropy \cite{BowenTopA70, KSNonaut96}, mean topological dimension \cite[Lemma 7.2]{Gutman17}, metric mean dimension \cite[Theorem 4.6]{RANonaut22}).

\begin{prop}\label{prop: nonwandering}
Let $(X,T,d)$ be a topological dynamical system and let $\Omega(T)$ be its set of non-wandering points. Then
\[ \madim(X,T,d) = \madim(\Omega(T), T, d). \]
\end{prop}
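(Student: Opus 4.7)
The inequality $\madim(\Omega(T), T, d) \leq \madim(X, T, d)$ is immediate from the subsystem monotonicity of mean Assouad dimension noted at the end of Section \ref{defn:mean Assouad dim}, since $\Omega(T)$ is a closed $T$-invariant subset of $X$.

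For the reverse inequality, the plan is to adapt Bowen's classical argument \cite{BowenTopA70} for the equality of topological entropies $\htop(X, T) = \htop(\Omega(T), T)$. The main combinatorial input is the \textbf{bounded sojourn lemma}: for every $\delta > 0$ there exists $N(\delta) \in \N$ such that for every $y \in X$,
\[
\#\bigl\{i \in \Z : T^i y \notin U_\delta(\Omega(T))\bigr\} \leq N(\delta),
\]
where $U_\delta(\Omega(T))$ denotes the open $\delta$-neighborhood of $\Omega(T)$. This follows by covering the compact set $X \setminus U_\delta(\Omega(T))$ by finitely many open $T$-wandering neighborhoods (the wandering set being open and disjoint from $\Omega(T)$), together with the fact that each orbit visits a wandering neighborhood at most once.

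Now fix $s > \madim(\Omega(T), T, d)$ and choose $C_0 > 0$ with $e^{S(\Omega(T), r', \rho')} \leq C_0 (r'/\rho')^s$ for all $0 < \rho' < r'$. The heart of the argument is to establish the covering estimate
\[
N_{d_M}\bigl(B_{d_M}(x, r), \rho\bigr) \leq K(r, \rho, \delta) \cdot \sup_{\omega \in \Omega(T)} N_{d_M}\bigl(B_{d_M}(\omega, r + \delta) \cap \Omega(T),\, \rho - \delta\bigr)
\]
for every $x \in X$, $M \in \N$, $0 < \rho < r$ and $\delta \in (0, \rho/4)$, with $K$ independent of $x$ and $M$. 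Granted this, taking $(1/M)\log$, passing to the supremum over $x \in X$ and the limit $M \to \infty$ yields $S(X, r, \rho) \leq S(\Omega(T), r + \delta, \rho - \delta)$. Setting $\delta = \rho/4$ gives $(r+\delta)/(\rho-\delta) \leq 2r/\rho$, and exponentiating together with the assumption on $s$ implies $e^{S(X, r, \rho)} \leq 2^s C_0 (r/\rho)^s$. Therefore $\madim(X, T, d) \leq s$, and letting $s \searrow \madim(\Omega(T), T, d)$ concludes the proof.

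The main obstacle is establishing the displayed covering estimate. The plan is to decompose $[0, M)$ into the bad set $A(x, M) = \{i : T^i x \notin U_\delta(\Omega(T))\}$ of cardinality at most $N(\delta)$, and the complementary good set. The bad times contribute only a sub-exponential combinatorial factor, bounded by $\binom{M}{N(\delta)} \cdot N_d(X, \rho/4)^{N(\delta)}$, which is absorbed into $K(r, \rho, \delta)$. On the good set, where $T^i x$ is $\delta$-close to $\Omega(T)$, one approximates the orbit of any $y \in B_{d_M}(x, r)$ by an orbit of an appropriately chosen point $\omega^* \in \Omega(T)$, using uniform continuity of $T$ to control error propagation on each of the $O(N(\delta))$ maximal runs of consecutive good times. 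A further application of the bounded sojourn lemma---now to points $y$ lying in a Bowen ball $B_{d_M}(\omega, r + \delta)$ centered at $\omega \in \Omega(T)$---is used to pass from covering numbers in $X$ to covering numbers in $\Omega(T)$. The delicate part is balancing the scales $\delta$, $\rho - \delta$ and $r + \delta$ against the combinatorial overhead so that the final bound is independent of $M$ while the loss $\delta$ is small relative to $\rho$.
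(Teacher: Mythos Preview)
Your overall strategy---the bounded-sojourn lemma, the partition of $\{0,\dots,M-1\}$ into good and bad times, and a sub-exponential combinatorial overhead from the bad times---matches the paper's. The gap is the displayed covering estimate
\[
N_{d_M}\bigl(B_{d_M}(x,r),\rho\bigr)\le K(r,\rho,\delta)\cdot \sup_{\omega\in\Omega}N_{d_M}\bigl(B_{d_M}(\omega,r+\delta)\cap\Omega,\rho-\delta\bigr),
\]
which cannot hold with a right-hand side involving a \emph{single} $N_{d_M}$ over the full window. Even granting that $\binom{M}{N(\delta)}$ is sub-exponential (so calling it part of a constant $K$ is already inaccurate but harmless), the good set breaks into up to $N(\delta)+1$ intervals $J$, and on each $J$ you only know $\dist(T^i x,\Omega)<\delta$ \emph{for each $i$ separately}. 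There is no single $\omega^*\in\Omega$ whose orbit $\delta$-shadows $x$ across all of $[0,M)$, nor even across a single long $J$: uniform continuity of $T$ gives no control on $d_{|J|}(\omega^*,T^{\min J}x)$ when $|J|$ is unbounded. Consequently you cannot embed $B_{d_M}(x,r)$ into one $d_M$-Bowen ball centred in $\Omega$, and the ``further application of the bounded sojourn lemma'' does not deliver the passage to $\cap\,\Omega$ either.

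The paper handles exactly this point by keeping a \emph{product} over the good intervals. After stratifying by the bad-time pattern $I$, it shows
\[
N_{d_M}\bigl(B_{d_M}(x,r)\cap X_I,\rho\bigr)\le N_{d_1}(X,\rho)^{O(N)}\prod_{J}\sup_{\omega\in\Omega}N_{d_{|J|}}\bigl(B_{d_{|J|}}(\omega,4r)\cap\Omega,\rho/3\bigr),
\]
where a separate geometric lemma (not bounded sojourn) is used to pass from ``each iterate is near $\Omega$'' to a genuine $d_{|J|}$-Bowen ball centred in $\Omega$. Each factor with $|J|\ge M_0$ is then bounded by $e^{|J|(S(\Omega,4r,\rho/3)+\varepsilon)}$, the short-$J$ factors and the sum over $I$ are absorbed into the sub-exponential prefactor, and $\sum_J|J|\le M$ in the exponent reassembles a single $e^{M(\cdot)}$. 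This yields $S(X,r,\rho)\le S(\Omega,4r,\rho/3)$, from which the conclusion follows as in your last paragraph. If you replace the single-$\omega^*$ step by this product decomposition together with the long/short-interval split, your outline becomes a correct proof.
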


For the proof we shall need some notation and preliminary lemmas. We shall denote $\Om := \Om(T)$, and for $M \in \N,\ 0 < \rho <r$ set
\[\Om_{M,\rho} = \{ x \in X : \dist_{d_M}(\Omega, x) < \rho \}\]
and
\[ K_\rho = \{ x \in X : \dist(\Om, x) \geq \rho\}.\]

\begin{lem}\label{lem: wandering visits}
For every $\rho > 0$ there exists $N = N(\rho) \in \N$ such that
\[ |\{ 0 \leq i < \infty : T^i x \in K_\rho\}| \leq N.\]
\end{lem}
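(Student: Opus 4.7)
The plan is a standard compactness-and-pigeonhole argument, the only real content being the transfer from a pointwise statement (each $y \in K_\rho$ is wandering) to a uniform statement on the orbit. First, since $\Om = \Om(T)$ is closed in $X$, the set $K_\rho = \{x \in X : \dist(\Om, x) \geq \rho\}$ is a compact subset of $X$ consisting entirely of wandering points. By the negation of the definition of non-wandering point, each $y \in K_\rho$ admits an open neighbourhood $U_y \ni y$ such that
\[ T^n U_y \cap U_y = \emptyset \quad \text{for every } n \geq 1. \]

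Next, I would use compactness of $K_\rho$ to extract a finite subcover $U_{y_1}, \ldots, U_{y_N}$ with $K_\rho \subset \bigcup_{j=1}^N U_{y_j}$, and declare $N(\rho) := N$. Note that the cover, and hence $N$, depends only on $\rho$ and the system, not on any particular $x$.

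The key step is then to show that for every $x \in X$ the forward orbit $\{T^i x : i \geq 0\}$ enters each $U_{y_j}$ \emph{at most once}. Indeed, if $0 \leq i < i'$ were such that $T^i x \in U_{y_j}$ and $T^{i'} x \in U_{y_j}$, then setting $n = i' - i \geq 1$ one would have $T^{i'} x = T^n(T^i x) \in T^n U_{y_j}$ (because $T^i x \in U_{y_j}$) and simultaneously $T^{i'} x \in U_{y_j}$, contradicting $T^n U_{y_j} \cap U_{y_j} = \emptyset$. Since $K_\rho \subset \bigcup_{j=1}^N U_{y_j}$, summing over the $N$ sets of the cover yields $|\{i \geq 0 : T^i x \in K_\rho\}| \leq N$, uniformly in $x$.

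There is no genuine obstacle here --- this is the classical "wandering points are visited finitely often" argument used in Bowen's work on entropy of the non-wandering set. The only subtlety worth flagging is the need for uniformity in $x \in X$: one must fix the finite cover \emph{before} quantifying over $x$, which is exactly what compactness of $K_\rho$ provides.
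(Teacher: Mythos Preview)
Your argument is correct and essentially identical to the paper's own proof: both pick, for each $y\in K_\rho$, a wandering neighbourhood $U_y$, pass to a finite subcover by compactness of $K_\rho$, and observe that any forward orbit enters each $U_{y_j}$ at most once. You even spell out the ``at most once'' step in slightly more detail than the paper does.
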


\begin{proof}
For each $x \in K_\rho$, let $U_x$ be an open neighbourhood of $x$ such that $T^n U_x \cap U_x = \emptyset$ for every $n \geq 1$ (it exists as $x \notin \Om$). As $K_\rho$ is a compact set, there exists a finite subcover $\{ U_i : 1 \leq i \leq N\}$ of $\{ U_x : x \in K_\rho \}$. As every orbit can visit each set $U_i$ at most once and $K_\rho \subset \bigcup \limits_{i=1}^N U_i$, we see that every orbit can visit $K_\rho$ at most $N$ times. 
\end{proof}

\begin{lem}\label{lem: om cover bound}
The following inequality holds for all $x \in X,\ M \in \N$ and $0 < \rho < r$
\begin{equation}\label{eq: om cover bound}
N_{d_M}(B_{d_M}(x,r) \cap \Omega_{M,\rho}, 3\rho) \leq \sup \limits_{y \in \Omega(T)} N_{d_M}(B_{d_M}(y, 4r)\cap \Omega(T), \rho).
\end{equation}
\end{lem}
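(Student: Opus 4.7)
The plan is to transfer an optimal cover of a nearby piece of $\Om(T)$ back to the original set. Specifically, given the center $x$, I would pick a single reference point $y \in \Om(T)$ close to $B_{d_M}(x,r) \cap \Om_{M,\rho}$, construct a map $\phi : B_{d_M}(x,r) \cap \Om_{M,\rho} \to B_{d_M}(y, 4r) \cap \Om(T)$ moving each point by at most $\rho$ in $d_M$, and pull back an optimal $\rho$-cover through $\phi$ to obtain a $3\rho$-cover of the source.

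To build the reference point, assume the left-hand side of \eqref{eq: om cover bound} is nonempty (otherwise the inequality is trivial) and pick some $z_0 \in B_{d_M}(x,r) \cap \Om_{M,\rho}$. By the definition of $\Om_{M,\rho}$ there is $y \in \Om(T)$ with $d_M(z_0, y) < \rho$, and consequently $d_M(x, y) \leq r + \rho < 2r$. For each $z \in B_{d_M}(x,r) \cap \Om_{M,\rho}$, choose (using countable choice if necessary) a point $\phi(z) \in \Om(T)$ with $d_M(z, \phi(z)) < \rho$. A short triangle-inequality computation then gives $d_M(y, \phi(z)) \leq d_M(y,x) + d_M(x,z) + d_M(z, \phi(z)) < 2r + r + \rho < 4r$, so $\phi(z) \in B_{d_M}(y, 4r) \cap \Om(T)$ as required.

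To finish, let $\{V_1, \ldots, V_k\}$ be an optimal cover of $B_{d_M}(y, 4r) \cap \Om(T)$ by sets of $d_M$-diameter at most $\rho$, where $k = N_{d_M}(B_{d_M}(y, 4r) \cap \Om(T), \rho)$, and set $U_i := \{ z \in B_{d_M}(x,r) \cap \Om_{M,\rho} : \phi(z) \in V_i\}$. These sets clearly cover the source, and for any $z_1, z_2 \in U_i$ one estimates
$$d_M(z_1, z_2) \leq d_M(z_1, \phi(z_1)) + d_M(\phi(z_1), \phi(z_2)) + d_M(\phi(z_2), z_2) < \rho + \rho + \rho = 3\rho,$$
so $\diam_{d_M}(U_i) \leq 3\rho$. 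Hence $N_{d_M}(B_{d_M}(x,r) \cap \Om_{M,\rho}, 3\rho) \leq k$, and taking the supremum over $y \in \Om(T)$ gives \eqref{eq: om cover bound}.

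I do not anticipate any serious obstacle in this argument; it is essentially a standard cover-pullback via an approximate projection onto $\Om(T)$. The only cosmetic point is that the pulled-back sets $U_i$ need not be open, but one may inflate each to an open $d_M$-neighborhood of arbitrarily small extra diameter without affecting the bound, which is enough regardless of whether the paper's convention on $N_d$ insists on open covers.
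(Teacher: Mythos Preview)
Your proof is correct and follows essentially the same approach as the paper: pick a reference point $y_0\in\Omega(T)$ near $B_{d_M}(x,r)\cap\Omega_{M,\rho}$, and pull back an optimal $\rho$-cover of $B_{d_M}(y_0,4r)\cap\Omega(T)$ to a $3\rho$-cover of the source. The only cosmetic difference is that the paper enlarges each cover set to the open $\rho$-neighborhood $V_i=\{u:\dist_{d_M}(U_i,u)<\rho\}$ (automatically open, of $d_M$-diameter $<3\rho$), whereas you phrase the same construction via the approximate projection $\phi$ and note the easy fix for openness.
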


\begin{proof}
Denote $\Om := \Om(T)$. If $B_{d_M}(x,r) \cap \Omega_{M,\rho} = \emptyset$, then \eqref{eq: om cover bound} holds trivially. Otherwise, take $z_0 \in B_{d_M}(x,r) \cap \Omega_{M,\rho} $ and $y_0 \in \Om$ such that $d_M(z_0, y_0) < \rho$. Take a cover
\[ B_{d_M}(y_0, 4r)\cap \Omega \subset \bigcup \limits_{i=1}^N U_i, \]
where $N = N_{d_M}(B_{d_M}(y_0, 4r)\cap \Omega(T), \rho)$ and $\diam(U_i,d_M) < \rho$. Set $V_i = \{ u \in X : \dist_{d_M}(U_i, u) < \rho\}$ and note that $\diam(V_i,d_M) < 3\rho$. We claim that
\begin{equation}\label{eq: V cover}
B_{d_M}(x,r) \cap \Omega_{M,\rho} \subset \bigcup \limits_{i=1}^N V_i.
\end{equation}
Proving this will yield \eqref{eq: om cover bound}. Take $z \in B_{d_M}(x,r) \cap \Omega_{M,\rho}$. There exists $y \in \Om$ such that $d_M(y,z) < \rho$. Then $d_M(y,y_0) \leq d_M(y, z) + d_M(z,x) + d_M(x, z_0) + d_M(z_0, y_0) < 4r$, so $y \in B_{d_M}(y_0, 4r)$. Therefore there exists $1 \leq i \leq N$ such that $y \in U_i$ and hence $z \in V_i$. This proves \eqref{eq: V cover}.
\end{proof}

\begin{proof}[{Proof of Proposition \ref{prop: nonwandering}}]
For $0 < \rho < r$ and $M \in \N$ set
\[ s_M(\Om, r, \rho) = \sup \limits_{x \in \Om} N_{d_M}(B_{d_M}(x,r) \cap \Om, \rho).\]
Note that for the restriction of $T$ to $\Om$ one has
\[ S(\Om, r, \rho) = \lim \limits_{M \to \infty} \frac{1}{M} \log s_M(\Om, r, \rho).\]
Therefore, for fixed $s > \madim(\Om, T, d)$ there exists $C > 0$ such that
\begin{equation}\label{eq: s bound}
e^{S(\Om, r,\rho)} \leq C (r/\rho)^s \text{ for all } 0 < r < \rho.
\end{equation} 
Let us fix such $s$. Fix $0 < \rho < r$ and $\eps > 0$, There exists $M_0 \in \N$ such that 
\begin{equation}\label{eq: sM limit bound}
\frac{1}{M} \log s_M(\Om, r, \rho) \leq S(\Om, r, \rho) + \eps \text{ for all } M \geq M_0.
\end{equation} 
Let $N = N(\rho)$ be an in Lemma \ref{lem: wandering visits}. Fix now $M \in \N$ with $M\geq M_0(N+1)$. Let
\[ \Lambda = \{ I \subset \{0, 1, \ldots, M-1\} : |I| \leq N \} \]
and for $x \in X$ set
\[ I_x = \{ 0 \leq i < M : T^i x \in K_\rho \}. \]
By Lemma \ref{lem: wandering visits}, $I_x \in \Lambda$ for every $x \in X$. Therefore, setting $ X_I = \{ x \in X : I_x = I \}$ we have $X = \bigcup \limits_{I \in \Lambda} X_I$ and hence
\begin{equation}\label{eq: Lambda sum bound}
N_{d_M}(B_{d_M}(x,r), \rho) \leq \sum \limits_{ I \in \Lambda} N_{d_M}(B_{d_M}(x,r) \cap X_I, \rho).
\end{equation} 
Let us now fix $I \in \Lambda$ and partition $\{ 0, 1, \ldots, M-1\}$ as follows:
\begin{equation}\label{eq: I partition} \{ 0, 1, \ldots, M-1\} = \bigcup \limits_{i \in I} \{i\} \cup \bigcup \limits_{J \in \mP_I} J,
\end{equation}
where $\mP_I$ is the collection of maximal subintervals of $\{ 0, 1, \ldots, M-1\}$ which are disjoint with $I$.  In other words, $\mP_I$ is the collection of all sets $J \subset \{ 0, 1, \ldots, M-1\}$ of the form $J = \{ 0 \leq i < M : j_0 \leq i \leq j_1  \}$ with $0 \leq j_0 \leq j_1 < M,\ J \cap I = \emptyset$ satisfying the conditions
\[ j_0 = 0 \text{ or } j_0 - 1 \in I \]
and
\[ j_1 = M-1 \text{ or } j_1 + 1 \in I. \]
Note that $|\mP_I| \leq N+1$ and so
\begin{equation}\label{eq: J leq M0}
 \sum \limits_{\substack{J \in \mP_I\\ |J| < M_0}} \# J \leq M_0(N+1).     
\end{equation}
We shall now make use of the following two observations:
\[N_{d_{M_1 + M_2}}(A, \rho) \leq N_{d_{M_1}}(A, \rho)N_{d_{M_2}}(T^{M_1}A, \rho) \text{ for } A \subset X \text{ and } M_1, M_2 \in \N\]
and for $J \in \mP_I$
\[ T^{\min J} (B_{d_M}(x,r) \cap X_I) \subset B_{d_{M - \min J}}(T^{\min J}x,r) \cap \Om_{|J|, \rho} \subset B_{d_{|J|}}(T^{\min J}x,r) \cap \Om_{|J|, \rho}. \]
Using those repeatedly over the partition in \eqref{eq: I partition} together with \eqref{eq: J leq M0}, we obtain for $x \in X$
\begin{equation}\label{eq: fixed I bound}
\begin{split}
N_{d_M}(B_{d_M}(x,r) \cap X_I, \rho) & \leq N_{d_1}(X,\rho)^{|I|} \prod \limits_{J \in \mP_I}N_{d_{|J|}}(B_{d_{|J|}} (T^{\min J} x, r) \cap \Om_{|J|, \rho}, \rho) \\
& \leq N_{d_1}(X,\rho)^{N+M_0(N+1)} \prod \limits_{\substack{J \in \mP_I\\ |J| \geq M_0}}N_{d_{|J|}}(B_{d_{|J|}} (T^{\min J} x, r) \cap \Om_{|J|, \rho}, \rho).
\end{split}
\end{equation}
Lemma \ref{lem: om cover bound} and \eqref{eq: sM limit bound} give for $J \in \mP_I$ such that $|J| \geq M_0$
\[
\begin{split}
N_{d_{|J|}}(B_{d_{|J|}} (T^{\min J} x, r) \cap \Om_{|J|, \rho}, \rho) & \leq \sup \limits_{y \in \Om} N_{d_{|J|}}(B_{d_{|J|}} (y , 4r) \cap \Om, \rho / 3) \\
& = s_{|J|}(\Om, 4r, \rho / 3) \\
& \leq e^{|J|(S(\Om, 4r, \rho / 3) + \eps)}.
\end{split}
\]
Combining this with \eqref{eq: fixed I bound} and setting $C(r,\rho) = N_{d_1}(X,\rho)^{N+M_0(N+1)}$ we obtain
\[
\begin{split}
N_{d_M}(B_{d_M}(x,r) \cap X_I, \rho) & \leq C(r, \rho) \exp \left( (S(\Om, 4r, \rho / 3) + \eps) \sum \limits_{\substack{J \in \mP_I\\ |J| \geq M_0}} |J| \right) \\
& \leq C(r, \rho) e^{(S(\Om, 4r, \rho / 3) + \eps)M}.
\end{split}
\]
Applying this to \eqref{eq: Lambda sum bound} and noting that $|\Lambda| \leq M^N$ gives
\[ N_{d_M}(B_{d_M}(x,r), \rho) \leq M^N C(r,\rho)e^{(S(\Om, 4r, \rho / 3) + \eps)M} \]
and hence
\[ \frac{1}{M} \sup \limits_{x \in X} \log N_{d_M}(B_{d_M}(x,r) \leq \frac{N}{M} \log M + \frac{\log C(r,\rho)}{M} + S(\Om, 4r, \rho / 3) + \eps.  \]
Letting $M \to \infty$ and taking arbitrarily small $\eps > 0$ yields
\[ S(X, r, \rho) \leq S(\Om, 4r, \rho / 3).  \]
By \eqref{eq: s bound}
\[ e^{S(X, r, \rho)} \leq e^{S(\Om, 4r, \rho / 3)} \leq 12^s C(r / \rho)^s. \]
As we can choose $s$ arbitrarily close to $\madim(\Om, T, d)$, we conclude that $\madim(X,T,d) \leq \madim(\Om, T, d)$. As the opposite inequality is trivial, this finishes the proof.
\end{proof}

\section{Examples}\label{sec: examples}

In this section we present calculations of the mean Assouad dimesnion and spectrum in two basic cases - full shifts and band-limited functions.

\subsection{Full shifts}

Let $(F,d)$ be a compact metric space. Let $(F^{\N},\sigma)$ be the one-sided full-shift on the alphabet $F$ equipped with the standard product metric
\begin{equation*}
    \dd((x_n)_{n\in\N}, (y_n)_{n\in\N})=\sum_{n\in\N}2^{-n}d(x_n,y_n).
\end{equation*}
It is interesting to note \cite[Theorem 5]{VV17} and \cite[Corollary 16]{Shi22} proved that $\ummdim(F^{\N},\sigma,\dd)=\udim (F,d)$ and $\lmmdim(F^{\N},\sigma,\dd)=\ldim (F,d)$ for the full-shifts on any compact alphabets. Their proofs are based on variational principles for the metric mean dimension. The purpose of this section is to prove that the mean Assouad dimension of the full-shift on any compact alphabet coincides with the Assouad dimension of the alphabet without using the variational principle.
More precisely, we show that:
\begin{prop}\label{madim=adim}
    It holds $\madim(F^{\N},\sigma,\dd)=\adim(F,d)$ and $\madim^{\theta}(F^{\N},\sigma,\dd)=\adim^{\theta}(F,d)$ for all $\theta\in (0,1)$.
\end{prop}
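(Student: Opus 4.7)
The plan is to establish both $\madim(F^{\N}, \sigma, \dd) = \adim(F, d)$ and $\madim^{\theta}(F^{\N}, \sigma, \dd) = \adim^{\theta}(F, d)$ by proving the two inequalities directly from the coordinate structure of $\dd$, without invoking the variational principles used in \cite{VV17, Shi22}. The starting observation: if $y \in B_{\dd_M}(x, r)$, evaluating $\dd(\sigma^m x, \sigma^m y) \leq r$ at $m = k$ and $m = M-1$ yields $d(x_k, y_k) \leq r$ for $0 \leq k < M$ and $d(x_k, y_k) \leq \min(\diam F, 2^{k-M+1} r)$ for $k \geq M-1$; conversely, coordinate-wise control $d(y_k, y'_k) \leq \eps$ forces $\dd_M(y, y') \leq 2\eps$ up to a geometric tail in $k$. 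This reduces Bowen-ball covering counts on $F^{\N}$ to products of Assouad-type covering counts on $F$.

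For the upper bound $\madim(F^{\N}, \sigma, \dd) \leq \adim(F, d)$, I fix $s > \adim(F, d)$ and $0 < \rho < r$, set $L = \lceil \log_2(4 \diam F / \rho) \rceil$, and cover each $B_d(x_k, R_k)$ for $0 \leq k < M + L$ (with $R_k = r$ for $k < M$ and $R_k = 2^{k-M+1} r$ otherwise) by at most $C_A \cdot 4^s (R_k/\rho)^s$ sets of $d$-diameter $\leq \rho/4$ via the Assouad hypothesis. The product of these coordinate covers has $\dd_M$-diameter at most $\sum_n 2^{-n}(\rho/4) + \diam F \cdot 2^{-L} \leq \rho$. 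Multiplying the coordinate counts, taking logarithms, dividing by $M$, and letting $M \to \infty$ with $L$ fixed in $\rho$ kills the boundary factors and produces $e^{S(X, r, \rho)} \leq C_A \cdot 4^s (r/\rho)^s$, so $\madim(F^{\N}, \sigma, \dd) \leq s$.

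For the lower bound, given $s < \adim(F, d)$ and any $C > 0$, I use \eqref{eq: covering sep numbers ineq} to produce $z \in F$, $0 < \rho_0 < r_0$, and a $\rho_0$-separated set $\{y^{(1)}, \ldots, y^{(\widehat N)}\} \subset B_d(z, r_0)$ with $\widehat N$ as large as desired relative to $(r_0/\rho_0)^s$. For each tuple $(i_0, \ldots, i_{M-1}) \in \{1, \ldots, \widehat N\}^M$ I form $w = (y^{(i_0)}, \ldots, y^{(i_{M-1})}, z, z, \ldots)$ and set $x_z = (z, z, \ldots)$. A direct calculation gives $\dd_M(w, x_z) \leq 2 r_0$ (since $d(w_k, z) \leq r_0$ for $k < M$ and $w_k = z$ otherwise, summed against the geometric weights), while for tuples first differing at index $k < M$ one has $\dd_M(w, w') \geq d(y^{(i_k)}, y^{(j_k)}) \geq \rho_0$. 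Therefore $\widehat N^M$ many $\rho_0$-separated points lie in $B_{\dd_M}(x_z, 2 r_0)$, giving $S(X, 2 r_0, \rho_0) \geq \log \widehat N$ and $e^{S(X, 2 r_0, \rho_0)} > C(2 r_0/\rho_0)^s$ after absorbing the factor $2^s$ into the choice of $\widehat N$.

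The spectrum case follows the same template with $\rho = r^{1/\theta}$: the upper bound applies the Assouad-spectrum estimate in each coordinate cover (the truncation level $L$ and the tail argument are unchanged), and the lower bound runs the same Bowen-ball construction starting from $r^{1/\theta}$-separated points in $B_d(z, r)$. The main technical obstacle I anticipate is the scale-matching step in the lower bound for the spectrum: Bowen-ball radius $\tilde r = 2r$ forces the $\madim^\theta$ relation to read $\tilde r^{1/\theta} = 2^{1/\theta} r^{1/\theta}$, whereas the direct construction naturally produces separation only at scale $r^{1/\theta}$. Bridging this mismatch is the most delicate point; it will be handled by exploiting the insensitivity of $\adim^\theta$ to constant rescalings of the scale pair $(r, r^{1/\theta})$ (together, if necessary, with a pigeonhole refinement concentrating the separated set into a sub-ball of $B_d(z, r_0)$), so that the exponent $s$ is preserved through the infimum defining $\madim^\theta$.
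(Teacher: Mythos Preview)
Your approach is essentially correct for $\madim$ and proceeds more directly than the paper, which first establishes the intermediate Lemma~\ref{equivalent defn}: there one shows $\madim(F^{\N},\sigma,\dd)=\madim(F^{\N})$ and $\madim^\theta(F^{\N},\sigma,\dd)=\madim^\theta(F^{\N})$, where the right-hand sides are defined using the $\ell^\infty$-metric $d_\infty^N$ on the projections $F^N$ rather than the Bowen metric $\dd_N$. In the $\ell^\infty$-reformulation one has $B_{d_\infty^N}((z,\dots,z),r)=B_d(z,r)^N$ \emph{exactly}, and products of $\rho$-separated sets are $\rho$-separated, so no constant factors appear. After this reduction, the comparison with $\adim(F,d)$ and $\adim^\theta(F,d)$ is immediate for both the dimension and the spectrum.

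Your direct route avoids Lemma~\ref{equivalent defn} at the cost of constant-factor losses between the sum metric $\dd$ and coordinate control. For $\madim$ this is harmless: the pair $(2r_0,\rho_0/2)$ is still an admissible pair of scales, and the factor $4^s$ is absorbed into $C$. For $\madim^\theta$, however, the constraint $\rho=r^{1/\theta}$ is rigid, and your construction produces separation at scale $c\,r_0^{1/\theta}$ inside a ball of radius $c'\,r_0$ with $c\neq (c')^{1/\theta}$. You correctly flag this as the delicate point. Your proposed fix---invoking insensitivity of $\adim^\theta$ (and implicitly $\madim^\theta$) to constant rescalings of the scale pair---is the natural remedy and is consistent with the bi-Lipschitz invariance asserted in Proposition~\ref{bi-Lipschitz invariants}, but carrying it out rigorously for a general compact alphabet (which need not be doubling) is not entirely trivial: the obvious refinement ``cover a $2r$-ball by $r$-balls'' requires a uniform bound that is exactly the Assouad/doubling condition. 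The paper's $\ell^\infty$-reformulation sidesteps this issue completely, which is the main advantage of that route; your approach buys a more hands-on argument at the expense of needing this auxiliary robustness lemma.
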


For $N\in\N$, we define projections $\pi_N: F^{\N}\to F^N$ as $\pi_N(x_1,x_2,\ldots)=(x_1,\ldots,x_N)$.
For $0<\rho<r$, we denote 
        \begin{equation*}
    S_{\infty}(F^{\N},r,\rho)=\lim_{N\to\infty}\frac{1}{N}\sup_{x\in F^{\N}} \log N_{d_{\infty}^{N}}(B_{d_{\infty}^{N}}(\pi_N(x),r),\rho),
\end{equation*}
where 
\begin{equation*}
    B_{d_{\infty}^{N}}(\pi_N(x),r)=\{\xi\in F^N: d_{\infty}^{N}(\pi_N(x),\xi)\leq r\}
\end{equation*}
is the ball in $F^N$ centred at $\pi_N(x)$ and of radius $r$ and $d_{\infty}^{N}$ is the $\ell^\infty$-distance on $F^N$. The limit with respect to $N$ exists since the quantity $\sup_{x\in F^{\N}} \log N_{d_{\infty}^{N}}(B_{d_{\infty}^{N}}(\pi_N(x),r),\rho)$ is sub-additive in $N$ for arbitrary $0<\rho<r$.

The following lemma is inspired by \cite[Proposition A.3 \& Lemma A.4]{mmdimcompress}. The key point is one can define the mean Assouad dimension and spectrum of the full-shift equivalently in a way which avoids the direct use of both the shift map $\sigma$ and the metric $\dd$. Instead, one uses only the infinity norm on $F^N$ and the projection $\pi_N: F^{\N}\to F^N$.
    \begin{lem}\label{equivalent defn}
        If we further define
\begin{equation*}
    \madim(F^{\N})=\inf\{s>0: \underset{C>0}{\exists}\ \underset{0<\rho<r}{\forall}\  e^{S_{\infty}(F^{\N},r,\rho)}\leq C(r/\rho)^s\}
\end{equation*}
and 
\begin{equation*}
    \madim^{\theta}(F^{\N})=\inf\{s>0: \underset{C>0}{\exists}\ \underset{0<r<1}{\forall}\  e^{S_{\infty}(F^{\N},r,r^{1/\theta})}\leq C(r/r^{1/\theta})^s\}
\end{equation*}
for $\theta\in(0,1)$. Then we have $\madim(F^{\N},\sigma,\dd)=\madim(F^{\N})$ and $\madim^{\theta}(F^{\N},\sigma,\dd)=\madim^{\theta}(F^{\N})$ for all $\theta\in(0,1)$.
    \end{lem}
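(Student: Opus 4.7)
The plan is to reduce the equivalence to two inequalities between $S(F^{\N}, r, \rho)$ (defined via $\dd_M$) and $S_{\infty}(F^{\N}, r, \rho)$ (defined via the product metric $d_\infty^N$): concretely, $S_{\infty}(F^{\N}, r, \rho) \leq S(F^{\N}, r, \rho/2)$ and $S(F^{\N}, r, \rho) \leq S_{\infty}(F^{\N}, 2r, \rho/c)$ for all $0 < \rho < r$ and some absolute constant $c$. Since only bi-Lipschitz changes of scales are involved, both inequalities are preserved under the infimum defining $\madim$, yielding $\madim(F^{\N}, \sigma, \dd) = \madim(F^{\N})$. For the spectrum case, substituting $\rho = r^{1/\theta}$ and handling the resulting constant discrepancies between $\rho/c$ and $(cr)^{1/\theta}$ by the same scale-decomposition trick yields the analogous equality.

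For the first (easier) direction I would introduce the embedding $\iota : F^N \hookrightarrow F^{\N}$ sending $(y_1, \ldots, y_N)$ to any eventually constant extension, and verify that $\frac{1}{2} d_\infty^N(y, z) \leq \dd_N(\iota(y), \iota(z)) \leq d_\infty^N(y, z)$ for all $y, z \in F^N$ (the upper bound uses $\sum_{n \geq 1} 2^{-n} = 1$ together with the fact that the constant tails of $\iota(y)$ and $\iota(z)$ agree; the lower bound picks $m = n^* - 1$ for $n^*$ realizing $d_\infty^N(y, z)$, so that the contribution $2^{-1} d(y_{n^*}, z_{n^*})$ dominates). This converts a $d_\infty^N$-covering problem into a $\dd_N$-covering problem at half the scale, giving $N_{d_\infty^N}(B_{d_\infty^N}(z, r), \rho) \leq \sup_{x \in F^{\N}} N_{\dd_N}(B_{\dd_N}(x, r), \rho/2)$, whence the first inequality by taking logarithm, supremum over $z$, and the limit in $N$.

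The second inequality involves the main technical obstacle: the control of the tail coordinates of the Bowen ball. From the definition of $\dd_M$ one checks that $y \in B_{\dd_M}(x, r)$ forces $d(x_n, y_n) \leq 2r$ for $n \leq M$ and $d(x_{M+j}, y_{M+j}) \leq 2^{j+1} r$ for $j \geq 1$, while the weight of the $(M+j)$-th coordinate in $\dd_M$ is at most $2^{-j}$. I would fix $L = L(\rho) \in \N$ with $D \cdot 2^{-L} \leq \rho/4$ (where $D = \diam(F, d)$) and cover $\pi_{M+L}(B_{\dd_M}(x, r))$ by products $V_1 \times \cdots \times V_{M+L}$ of $d$-sets of diameter $\leq \rho/2$ drawn from covers of the individual $d$-balls in each factor (at radii $2r$ for the first $M$ and $2^{j+1}r$ for the trailing $L$). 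A direct computation with the weighted sum verifies that each lifted cell $\pi_{M+L}^{-1}(V_1 \times \cdots \times V_{M+L})$ has $\dd_M$-diameter $\leq \rho$; the key point is that the free tail beyond $M+L$ contributes at most $\rho/4$ by the geometric decay of $\dd$. The total number of cells factors as a product over the $M + L$ coordinates; the first $M$ factors each contribute at most $\sup_y N_d(B_d(y, 2r), \rho/2)$, while the $L$ trailing factors depend only on $\rho$, so after taking $\log$, dividing by $M$, and letting $M \to \infty$, only the first block survives, giving $S(F^{\N}, r, \rho) \leq \log \sup_y N_d(B_d(y, 2r), \rho/2)$.

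To close the loop I would pass back to $S_{\infty}$ via the standard product-of-separated-sets argument: for $y \in F$ and $\tilde y = (y, \ldots, y) \in F^N$, a maximal $\rho/8$-separated set in $B_d(y, 2r)$ yields by Cartesian product a $\rho/8$-separated set in $B_{d_\infty^N}(\tilde y, 2r)$, and combining with the covering-separating conversions \eqref{eq: covering sep numbers ineq} one obtains $N_d(B_d(y, 2r), \rho/2)^N \leq N_{d_\infty^N}(B_{d_\infty^N}(\tilde y, 2r), \rho/32)$. Taking supremum over $y$, logarithm, dividing by $N$, and letting $N \to \infty$ (using that the defining sub-additive sequence of $S_{\infty}$ equals its infimum in $N$) yields $\log \sup_y N_d(B_d(y, 2r), \rho/2) \leq S_{\infty}(F^{\N}, 2r, \rho/32)$, hence $S(F^{\N}, r, \rho) \leq S_{\infty}(F^{\N}, 2r, \rho/32)$. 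The main obstacle throughout is precisely this tail management: a naive substitution of $B_{\dd_M}(x, r)$ by a $d_\infty^{M+L}$-ball of radius $2^{L+1}r$ would introduce a $\rho$-dependent blow-up of the radius that ruins the Assouad-type scaling, and the product cover strategy respects the geometric decay of $\dd$ and bypasses this obstruction.
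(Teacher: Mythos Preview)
Your proposal is correct, but the route differs from the paper's in one substantial way. For the direction $S_\infty \leq S$ your embedding $\iota:F^N\hookrightarrow F^{\N}$ works exactly as you say; the paper instead uses the inclusion $B_{d_\infty^{N+L}}(\pi_{N+L}(x),r)\subset\pi_{N+L}(B_{\dd_N}(x,\tfrac{3}{2}r))$ coming from the single inequality $\dd_N(y,z)<d_\infty^{N+L}(\pi_{N+L}(y),\pi_{N+L}(z))+\tfrac{\rho}{2}$, which is essentially the same idea in different clothing. The real divergence is in the direction $S\leq S_\infty$. The paper uses that same inequality once more: it bounds $N_{\dd_N}(B_{\dd_N}(x,r),\rho)$ by $N_{d_\infty^{N+L}}(\pi_{N+L}(B_{\dd_N}(x,r)),\rho/2)$, then observes $\pi_{N+L}(B_{\dd_N}(x,r))\subset B_{d_\infty^N}(\pi_N(x),r)\times F^L$, so one picks up $N_{d_\infty^N}(B_{d_\infty^N}(\pi_N(x),r),\rho/2)$ times a factor $N_{d_\infty^L}(F^L,\rho/2)$ that disappears after dividing by $N$ and letting $N\to\infty$. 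This gives $S(F^\N,r,\rho)\leq S_\infty(F^\N,r,\rho/2)$ directly, without ever passing through the single-coordinate quantity $\sup_y N_d(B_d(y,2r),\rho/2)$.

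Your detour through $\log\sup_y N_d(B_d(y,2r),\rho/2)$ followed by the product-of-separated-sets trick is correct but effectively proves something stronger: it shows both $S$ and $S_\infty$ are sandwiched by the defining quantity for $\adim(F,d)$, which is precisely the content of Proposition~\ref{madim=adim} in the paper. In other words, your argument collapses the lemma and the proposition into one step. That is a legitimate (and arguably more informative) route, but it makes the comparison between $S$ and $S_\infty$ less transparent and the constants slightly worse. The paper's proof isolates the equivalence of the two growth rates as a standalone fact, usable without any reference to the alphabet's Assouad dimension; yours trades that modularity for a shorter overall path to Proposition~\ref{madim=adim}.
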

    \begin{proof}
        First, we prove that $\madim(F^{\N},\sigma,\dd)\leq \madim(F^{\N})$. Fix $x\in F^{\N}, 0<\rho<r$ and $N\in\N$. We immediately have $\pi_N(B_{\dd_N}(x,r))\subset B_{d_{\infty}^N}(\pi_N(x),r)$ since
        \begin{equation*}
            d_{\infty}^N(\pi_N(y),\pi_N(z))\leq \dd_N(y,z)
        \end{equation*}
        for all $y,z\in F^{\N}$. Let $L\in\N$ satisfy that $\sum_{n>L}2^{-n}\cdot \diam(F,d)<\rho/2$. Then
        \begin{equation}\label{Bowen metric and infinity metric}
            \dd_N(y,z)<d_{\infty}^{N+L}(\pi_{N+L}(y),\pi_{N+L}(z))+\frac{\rho}{2}
        \end{equation}
        and thus
        \begin{equation*}
            N_{\dd_N}(B_{\dd_N}(x,r),\rho)\leq N_{d_{\infty}^{N+L}}\left(\pi_{N+L}(B_{\dd_N}(x,r)),\frac{\rho}{2}\right).
        \end{equation*}
        Observe that
        \begin{equation*}
        \begin{split}
            \pi_{N+L}(B_{\dd_N}(x,r))&=\pi_N(B_{\dd_N}(x,r)) \times \pi_L(\sigma^N(B_{\dd_N}(x,r)))\\
            &\subset \pi_N(B_{\dd_N}(x,r)) \times F^L\subset B_{d_{\infty}^N}(\pi_N(x),r)\times F^L
        \end{split}
        \end{equation*}
        and therefore
        \begin{equation*}
            N_{d_{\infty}^{N+L}}\left(\pi_{N+L}(B_{\dd_N}(x,r)),\frac{\rho}{2}\right) \leq N_{d_{\infty}^N}\left(B_{d_{\infty}^N}(\pi_N(x),r),\frac{\rho}{2}\right) \cdot N_{d_{\infty}^L}\left(F^L,\frac{\rho}{2}\right).
        \end{equation*}
        This yields that
        \begin{equation*}
            \begin{split}
                &\lim_{N\to\infty}\frac{\sup\limits_{x\in F^{\N}}\log N_{\dd_N}(B_{\dd_N}(x,r),\rho)}{N}\\
                &\leq \lim_{N\to\infty}\frac{\sup\limits_{x\in F^{\N}}\log N_{d_{\infty}^{N}}\left(B_{d_{\infty}^{N}}(\pi_N(x),r),\frac{\rho}{2}\right)+\log N_{d_{\infty}^{L}}\left(F^L,\frac{\rho}{2}\right)}{N}\\
                &=\lim_{N\to\infty}\frac{\sup\limits_{x\in F^{\N}}\log N_{d_{\infty}^{N}}\left(B_{d_{\infty}^{N}}(\pi_N(x),r),\frac{\rho}{2}\right)}{N}.
            \end{split}
        \end{equation*}
        That is to say, $S(F^{\N},r,\rho)\leq S_{\infty}(F^{\N},r,\frac{\rho}{2})$ holds for arbitrary $0<\rho<r$. Therefore $\madim(F^{\N}, \sigma, \dd)\leq\madim(F^{\N})$. 
    
    We now prove the reverse inequality. Note $B_{d_{\infty}^{N+L}}(\pi_{N+L}(x),r)\subset \pi_{N+L}(B_{\dd_N}(x,\frac{3}{2}r))$\footnote{Here one uses inequality \eqref{Bowen metric and infinity metric}.}. This yields that
    \begin{equation*}
        \begin{split}
            &\lim_{N\to\infty}\frac{\sup\limits_{x\in F^{\N}}\log N_{d_{\infty}^{N+L}}(B_{d_{\infty}^{N+L}}(\pi_{N+L}(x),r),\rho)}{N}\\
            &\leq \lim_{N\to\infty}\frac{\sup\limits_{x\in F^{\N}}\log N_{d_{\infty}^{N+L}}(\pi_{N+L}(B_{\dd_N}(x,\frac{3}{2}r)),\rho)}{N}\\
            &\leq \lim_{N\to\infty}\frac{\sup\limits_{x\in F^{\N}}\log N_{d_{\infty}^{N}}(\pi_{N}(B_{\dd_N}(x,\frac{3}{2}r)),\rho)+\log N_{d_{\infty}^{L}}(F^L,\rho)}{N}.\\
        \end{split}
    \end{equation*}
    Observe also that if $\{U_i(x)\}_{i=1}^{K(x)}$ is a cover of $B_{\dd_N}(x,\frac{3}{2}r)$ with $\diam(U_i(x),\dd_N)<\rho$ for each $1\leq i\leq K(x)$, then $\{\pi_N(U_i(x))\}_{i=1}^{K(x)}$ is a cover of $\pi_N(B_{\dd_N}(x,\frac{3}{2}r))$ with $\diam(\pi_N(U_i(x)),d_{\infty}^{N})<\rho$. Hence
    \begin{equation*}
       N_{d_{\infty}^{N}}(\pi_{N}(B_{\dd_N}(x,\frac{3}{2}r)),\rho)\leq
       N_{\dd_N}(B_{\dd_N}(x,\frac{3}{2}r),\rho).
    \end{equation*}
    We further obtain that
    \begin{equation*}
        \lim_{N\to\infty}\frac{\sup\limits_{x\in F^{\N}}\log N_{d_{\infty}^{N+L}}(B_{d_{\infty}^{N+L}}(\pi_{N+L}(x),r),\rho)}{N+L}
        \leq \lim_{N\to\infty}\frac{\sup\limits_{x\in F^{\N}}\log N_{\dd_N}(B_{\dd_N}(x,\frac{3}{2}r),\rho)}{N}
    \end{equation*}
    and therefore $S_{\infty}(F^{\N},r,\rho)\leq S(F^{\N},\frac{3}{2}r,\rho)$. By the arbitrariness of $\rho$ and $r$, we finally obtain that $\madim(F^{\N}, \sigma, \dd)\geq\madim(F^{\N})$. Up to now, we have proven that $\madim(F^{\N}, \sigma, \dd)=\madim(F^{\N})$. Similarly, we may also obtain that $\madim^{\theta}(F^{\N}, \sigma, \dd)=\madim^{\theta}(F^{\N})$ for all $\theta\in(0,1)$. 
    \end{proof}

\begin{proof}[Proof of Proposition \ref{madim=adim}]
   First, we prove that $\madim(F^{\N},\sigma,\dd)\leq \adim (F,d)$. Let $s>\adim (F,d)$. Then there exists $C_0>0$ such that 
   \begin{equation*}
       \sup_{x\in F}N_d(B_d(x,r),\rho)\leq C_0\left(\frac{r}{\rho}\right)^s
   \end{equation*}
   for all $0<\rho<r$, where $B_d(x,r)=\{y\in F: d(x,y)\leq r\}$. Fix $0<\rho<r$. Note the quantity $N\mapsto \sup_{x\in F^{\N}} \log N_{d_{\infty}^{N}}(B_{d_{\infty}^{N}}(\pi_N(x),r),\rho)$ is sub-additive. Then 
   \begin{equation*}
   \begin{split}
       S_{\infty}(F^{\N},r,\rho)&=\inf_{N\in\N}\frac{1}{N}\sup_{x\in F^{\N}} \log N_{d_{\infty}^{N}}(B_{d_{\infty}^{N}}(\pi_N(x),r),\rho)\\
       &\leq \sup_{x\in F^{\N}} \log N_{d}(B_{d}(\pi_1(x),r),\rho)=\sup_{x\in F}\log N_d(B_d(x,r),\rho).
   \end{split}
   \end{equation*}
   This implies that $e^{S_{\infty}(F^{\N},r,\rho)}\leq C_0(r/\rho)^s$, and thus $\madim(F^{\N},\sigma,\dd)\leq s$ since $\rho$ and $r$ are arbitrary. Letting $s\to \adim (F,d)$, we conclude that $\madim(F^{\N},\sigma,\dd)\leq \adim (F,d)$.
   
   We now prove the opposite direction. 
   For $E\subset F$ and $\rho>0$, recall $\widehat{N}_d(E,\rho)$ denotes the largest possible cardinality of a $\rho$-packing by closed balls of radius $\rho$, centred at $E$. 
   Let $t>0$ with $t<\adim(F,d)$. Fix $C>0$ and $n\in\N$. We may find a point $x\in F$ and small enough $0<\rho<r$ such that
    \begin{equation*}
        \widehat{N}_d(B_d(x,r),\rho)>C\left(\frac{r}{\rho}\right)^t.
    \end{equation*}
    Let $\{B_d(y_k,\rho):1\leq k\leq \widehat{N}_d(B_d(x,r),\rho)\}$ be the optimal $\rho$-packing of $B_d(x,r)$. We consider the point $(x,\ldots,x)\in F^n$.
    Note that $B_{d_{\infty}^n}((x,\ldots,x),r)=B_d(x,r)\times\cdots\times B_d(x,r)$.
    It is direct that $\{B_{d_{\infty}^n}((y_{j_1},\ldots,y_{j_n}),\rho):1\leq j_k\leq \widehat{N}_d(B_d(x,r),\rho),1\leq k\leq n\}$ is a $\rho$-packing of $B_{d_{\infty}^n}((x,\ldots,x),r)$. Then we obtain that
    \begin{equation*}
        N_{d_{\infty}^n}(B_{d_{\infty}^n}((x,\ldots,x),r),\rho)\geq
        \widehat{N}_{d_{\infty}^n}(B_{d_{\infty}^n}((x,\ldots,x),r),\rho)\geq  \left(\widehat{N}_d(B_d(x,r),\rho)\right)^n
        \geq  C^n\left(\frac{r}{\rho}\right)^{nt}
    \end{equation*}
    and therefore
    \begin{equation*}
        S_{\infty}(F^{\N},r,\rho)\geq \log C+t\log\frac{r}{\rho}.
    \end{equation*}
    This yields that for every $C>0$, there exist $0<\rho<r$ such that $e^{S_{\infty}(F^{\N},r,\rho)}\geq C(r/\rho)^t$, and thus $\madim(F^{\N},\sigma,\dd)\geq t$. Letting $t\to\adim (F,d)$, we finally obtain that $\madim(F^{\N},\sigma,\dd)\geq \adim(F,d)$, which shows that $\madim(F^{\N},\sigma,\dd)=\adim(F,d)$. Similarly, we may also obtain that $\madim^{\theta}(F^{\N},\sigma,\dd)=\adim^{\theta}(F,d)$ for all $\theta\in (0,1)$.
\end{proof}

The following example shows that the general upper bound in terms of the mean Assouad dimension in Proposition \ref{general bound} can be sharp.

\begin{example}
    Fix $\lambda>0$ and let $F_{\lambda}=\{0\}\cup\left\{\frac{1}{n^{\lambda}} \vert n\in\N\right\}=\left\{0,1,\frac{1}{2^{\lambda}},\frac{1}{3^{\lambda}},\ldots\right\}$. We define a metric $\dd$ on the  full-shift $F_{\lambda}^{\N}$ by
    \begin{equation*}
        \dd((x_n)_{n\in\N},(y_n)_{n\in\N})=\sum_{n\in\N} 2^{-n}|x_n-y_n|.
    \end{equation*}
    Let $\sigma: F_{\lambda}^{\N}\to F_{\lambda}^{\N}$ be the (left) shift map.
    Then $\mmdim(F_{\lambda}^{\N},\sigma,\dd)=\bdim F_{\lambda}=\frac{1}{1+\lambda}$ and $\madim(F_{\lambda}^{\N},\sigma,\dd)=\adim F_{\lambda}=1$. Moreover, combining Proposition \ref{madim=adim} and \cite[Corollary 6.4]{FY18}, for all $\theta\in(0,1)$ we have
    \begin{equation*}
    \begin{split}
        \madim^{\theta}(F_{\lambda}^{\N},\sigma,\dd)&=\adim^{\theta} F_{\lambda}\\
        &=\min\left\{\frac{1}{(1+\lambda)(1-\theta)}, 1\right\}\\
        &=\min\left\{\frac{\mmdim(F_{\lambda}^{\N},\sigma,\dd)}{1-\theta}, \madim(F_{\lambda}^{\N},\sigma,\dd)\right\}.
    \end{split}
    \end{equation*}
\end{example}

\begin{rem}In \cite[Theorem A]{Rut24}, Rutar investigated all possible forms of the Assouad spectra of sets in Euclidean spaces. More precisely, if a function $\phi: (0,1)\to [0,d]$ satisfies
    \begin{equation*}\label{attainable forms of Assouad spectra}
        0\leq (1-\theta_1)\phi(\theta_1)-(1-\theta_2)\phi(\theta_2)\leq (\theta_2-\theta_1)\phi\left(\frac{\theta_1}{\theta_2}\right)
    \end{equation*}
    for all $0<\theta_1<\theta_2<1$, there exists a set $ F\subset \R^d$ for which $\adim^{\theta} F=\phi(\theta)$ for all $\theta\in (0,1)$. Based on Rutar's classification result and Proposition \ref{madim=adim}, the mean Assouad spectra of the full-shift on an alphabet is fully understood, if the alphabet is a compact subset of $\R^d$. Recently, Orgov\'{a}nyi and Rutar \cite{OR25branching} considered a more refined problem of studying attainable forms of the two-scale branching function (in the non-dynamical setting), defined by
    \begin{equation*}
        \beta_X(u,v)=\log \sup_{x\in X}N_d(B_d(x,e^{-v}),e^{-u}),
    \end{equation*}
    where $0\leq v\leq u$. They have obtained classification results for $\beta_X$. Studying branching functions is essential for obtaining full dimension results in certain settings, e.g. for the lower box-counting dimension for attractors of infinite iterated function systems \cite{BanajiRutar24} (see also \cite{RutarCFNotes26}). It is easy to notice that for a \textit{TDS} $(X,T, d)$, function $\gamma_X(u,v) = S(X, e^{-v}, e^{-u})$ is a two-scale branching function (in the sense of \cite[Definition 1.1]{OR25branching}) and it is $\alpha$-Lipschitz up to an $o(u)$ error for $\alpha = \mqadim(X,T,d)$ (by the same arguments as in \cite[Lemma 2.4]{OR25branching}). This opens the possibility of using the branching functions machinery for studying complexity of dynamical systems.
\end{rem}

\subsection{Band-limited functions}

We can compute the mean Assouad dimension and mean Assouad spectrum of the space of band-limited functions with respect to the shift transformation. Its significance comes from the fact that this class of systems played a crucial role in the proof of the celebrated embedding theorem of Gutman and Tsukamoto for mean topological dimension \cite{GT20}. Let us recall its definition from \cite{GT20}. Given a bounded continuous function $\varphi : \R \to \C$, we consider its Fourier transform $\mF(\varphi)$ understood as a tempered distribution, see e.g. \cite[Chapter 7]{RudinFunctional}. Given $a<b$, we denote
\[ V[a,b] = \{ \varphi: \R \to \C\ |\ \supp \mF(\varphi) \subset [a,b] \}.
\]
Equipped with the $L^\infty(\R)$-norm, $V[a,b]$ is a Banach space. We consider further the unit ball space
\[ B_1(V[a,b]) = \{ \varphi \in V[a,b] : \|\varphi\|_{L^{\infty}} \leq 1 \} \]
and endow it with the metric
\[ d(\varphi_1, \varphi_2) = \sum \limits_{n=1}^\infty 2^{-n}\|\varphi_1 - \varphi_2 \|_{L^{\infty}([-n,n])}, \]
which renders $B_1(V[a,b])$ a compact metric space (see \cite[Lemma 2.3]{GT20}). Note that $d$ is translation-invariant, i.e. $d(\varphi_1 + \psi, \varphi_2 + \psi) = d(\varphi_1, \varphi_2)$. Finally, we consider the homeomorphism of $B_1(V[a,b])$ given by the shift map
\[\sigma : B_1(V[a,b]) \to B_1(V[a,b]),\ \sigma(\varphi)(t) = \varphi(t+1),\]
so that $(B_1(V[a,b]), \sigma, d)$ is a topological dynamical system.

It turns out that the above system is highly homogeneous, in the sense that its mean Assouad dimension, mean Assouad spectrum and metric mean dimension coincide:

\begin{prop} For every $\theta \in (0,1)$
\[\mmdim(B_1(V[a,b]), \sigma, d) = \madim^{\theta}(B_1(V[a,b]), \sigma, d) = \madim(B_1(V[a,b]), \sigma, d) = 2(b-a). \]
\end{prop}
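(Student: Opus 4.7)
By Proposition \ref{general bound} one has $\ummdim(X,\sigma,d) \leq \madim^{\theta}(X,\sigma,d) \leq \madim(X,\sigma,d)$ for $X := B_1(V[a,b])$ and every $\theta \in (0,1)$. Hence the whole chain of equalities reduces to the two matching bounds $\ummdim(X,\sigma,d) \geq 2(b-a)$ and $\madim(X,\sigma,d) \leq 2(b-a)$.

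The lower bound on $\ummdim$ is essentially established in \cite{GT20}: using the Shannon sampling theorem for the Paley--Wiener space $V[a,b]$ at the Nyquist rate $1/(b-a)$, one constructs, for every sufficiently small $\eps > 0$ and every $M \in \N$, at least $\exp\bigl((2(b-a) + o_\eps(1))M\log(1/\eps)\bigr)$ pairwise $\eps$-separated functions in $B_1(V[a,b])$ with respect to $d_M$, obtained by independently varying the real and imaginary parts of the samples on $[0,M]$ inside an $\eps$-net of $[-1,1]$.

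The core of the proof is the upper bound $\madim(X,\sigma,d) \leq 2(b-a)$. The key structural properties of $(V[a,b],d)$ are that $d$ is translation-invariant and $1$-homogeneous and that $\sigma$ is a linear isometry; consequently, for all $\psi_1,\psi_2,\varphi\in V[a,b]$, $\lambda\in\R$ and $M\in\N$,
\[
d_M(\psi_1+\varphi,\psi_2+\varphi)=d_M(\psi_1,\psi_2),\qquad d_M(\lambda\psi_1,\lambda\psi_2)=|\lambda|\,d_M(\psi_1,\psi_2).
\]
Writing $B_k(V[a,b]) := \{\psi\in V[a,b]:\|\psi\|_{L^\infty}\leq k\}$ for $k>0$, for any $\varphi\in B_1(V[a,b])$ and any $0<\rho<r$ a translation by $-\varphi$ (using $B_1(V[a,b])-\varphi\subset B_2(V[a,b])$) followed by rescaling by $1/r$ yield
\[
N_{d_M}\!\bigl(B_{d_M}(\varphi,r)\cap B_1(V[a,b]),\,\rho\bigr) \leq N_{d_M}\!\bigl(B_{d_M}(0,1)\cap B_{2/r}(V[a,b]),\,\rho/r\bigr).
\]
The problem thus reduces to proving $N_{d_M}\!\bigl(B_{d_M}(0,1)\cap B_{2/r}(V[a,b]),\rho/r\bigr) \leq C^{M}(r/\rho)^{2(b-a)M+o(M)}$ for some constant $C>0$. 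For $\psi$ in this set, isolating the $n=1$ term of the defining series of $d$ in the inequality $d_M(\psi,0)\leq 1$ gives $\|\psi\|_{L^\infty([-1,M])}\leq 2$, while globally $\|\psi\|_{L^\infty}\leq 2/r$. Representing $\psi$ via Shannon sampling at rate $1/(b-a)$ then produces $\asymp 2(b-a)M$ essential real samples constrained to $[-2,2]$, together with tail samples bounded by $2/r$ which enter $d_M$ only with weight $2^{-n}$ at distance $n$ from $[0,M-1]$. Discretizing the essential samples at resolution $\asymp\rho/r$ and the tail samples at geometrically coarser scales produces the desired cover, and taking $M^{-1}\log$ and letting $M\to\infty$ then gives $S(X,r,\rho)\leq \log C + (2(b-a)+o(1))\log(r/\rho)$, establishing $\madim(X,\sigma,d)\leq 2(b-a)$.

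The main obstacle is this last quantitative sampling step: one has to combine the local $d_M$-bound on $[-1,M]$ with the weaker global $L^\infty$-bound $2/r$ and extract the correct exponent $2(b-a)$ with only $o(M)$ loss. This amounts to a refined Plancherel--P\'olya/Shannon sampling estimate with careful bookkeeping of the boundary contributions (the samples that lie outside the window $[0,M]$ but still couple into $d_M$ through the exponentially decaying weights $2^{-n}$); the analytic tools needed are of the same nature as those used in the constructions of \cite[\S 2]{GT20}.
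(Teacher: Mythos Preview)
Your reduction via Proposition~\ref{general bound} to the single inequality $\madim(X,\sigma,d)\leq 2(b-a)$ is exactly how the paper begins. The divergence is in how that upper bound is obtained, and your route is left incomplete at its decisive step.

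You work directly with the Bowen ball $B_{d_M}(\varphi,r)$: after translating by $-\varphi$ and rescaling by $1/r$ you land in $B_{d_M}(0,1)\cap B_{2/r}(V[a,b])$, and then you propose a sampling-theoretic covering of this set. The difficulty you yourself flag is real: the rescaled functions are only known to satisfy $\|\psi\|_{L^\infty}\leq 2/r$, which blows up as $r\to 0$, so you cannot simply invoke the metric mean dimension of $B_1(V[a,b])$. You are forced into a delicate analysis balancing the local bound $\|\psi\|_{L^\infty([-1,M])}\leq 2$, the global bound $2/r$, the $1/|x|$ decay of the sinc kernel, and the weights $2^{-n}$ in $d$. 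You sketch this but do not carry it out, and the Plancherel--P\'olya estimate you allude to is precisely the missing content; it is not a routine bookkeeping exercise.

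The paper sidesteps this entirely by invoking Proposition~\ref{prop: madim stable def}, the characterisation of $\madim$ via the two-sided stable sets $B_{d_\Z}(\varphi,r)$. The point is that for this particular system one has the identity
\[
B_{d_\Z}(\varphi,r)=\{\psi\in B_1(V[a,b]):\|\psi-\varphi\|_{L^\infty(\R)}\leq r\},
\]
so the map $F(\psi)=(\psi-\varphi)/r$ sends $B_{d_\Z}(\varphi,r)$ \emph{into $B_1(V[a,b])$ itself}. Since $F$ is a $d_M$-similarity with ratio $1/r$ (using translation invariance of $d$ and the relation $F\circ\sigma=\sigma\circ F+\text{const}$), one gets
\[
N_{d_M}(B_{d_\Z}(\varphi,r),2\rho)\leq N_{d_M}(B_1(V[a,b]),\rho/r),
\]
and the right-hand side is controlled directly by $\mmdim(B_1(V[a,b]),\sigma,d)$. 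No sampling analysis is needed. The key idea you are missing is that passing from $B_{d_M}$ to $B_{d_\Z}$ trades the awkward local-plus-global bound for a clean global $L^\infty$ bound, which is exactly what makes the rescaling close up.
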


\begin{proof}

It is well-known that $\mmdim(B_1(V[a,b]), \sigma, d) = 2(b-a)$ - see for instance \cite[footnote 4.]{GQT19} for the sketch of the argument in the case of the mean topological dimension. For the metric mean dimension one has to obtain the same upper bound, and it follows by noting that the embedding used in the argument - given via the sampling theorem - is in fact Lipschitz continuous. Therefore, due to Proposition \ref{general bound}, it suffices to prove
\begin{equation}\label{eq: bd lim madim leq mmdim}
\madim(B_1(V[a,b]), \sigma, d) \leq \mmdim(B_1(V[a,b]), \sigma, d).
\end{equation}

For that we shall apply Proposition \ref{prop: madim stable def}, and give an upper bound on $N_{d_M}(B_{d_\Z}(x,r),\rho)$ for $0< \rho < r$, where 
\[B_{d_{\Z}}(\varphi,r)=\{\psi\in B_1(V[a,b]): d(\sigma^M \varphi, \sigma^M \psi)\leq r \text{ for all } M\in\Z \}.\]
Note that $B_{d_{\Z}}(\varphi,r)$ is $\sigma$-invariant and in our case one simply has
\begin{equation}\label{eq: stable set V}
B_{d_{\Z}}(\varphi,r) = \{ \psi \in B_1(V[a,b]) : \|\psi - \varphi\|_{L^\infty(\R)} \leq r  \}.
\end{equation}
Fix $\eps>0$. Then there exists $\delta_0 > 0$ so that for all $0 < \delta \leq \delta_0$ there is $M_0(\delta) \in \N$ such that for all $M \geq M_0(\delta)$
\begin{equation}\label{eq: mmdim upper bound}
N_{d_M}(B_1(V[a,b]), \delta) \leq (1 / \delta)^{M(\mmdim(B_1(V[a,b]), \sigma, d) + \eps)}.
\end{equation} 
Fix $\varphi \in B_1(V[a,b])$ and $r>0.$ Consider the following map:
\[ F: B_{d_{\Z}}(\varphi,r) \to B_1(V[a,b]),\ F(\psi) = \frac{\psi - \varphi}{r}.  \]
Note that indeed $F(B_{d_{\Z}}(\varphi,r)) \subset  B_1(V[a,b])$ due to \eqref{eq: stable set V} and the linearity of the Fourier transform $\mF$. Moreover, $F$ satisfies
\[ F(\sigma(\psi)) = \sigma(F(\psi)) + \frac{\sigma(\varphi) - \varphi}{r}, \]
and hence by the translation invariance of $d(\cdot, \cdot)$
\begin{equation}\label{eq: F dist equivariance} d(F\circ\sigma(\psi_1), F\circ\sigma(\psi_2)) = d(\sigma\circ F(\psi_1), \sigma\circ F(\psi_2)).
\end{equation}
Furthermore, $F$ is a similarity scaling by $1/r$ in the metric $d$, i.e.
\begin{equation}\label{eq: F similarity}
d(F(\psi_1), F(\psi_2)) = \frac{1}{r}d(\psi_1, \psi_2).
\end{equation}
Take now $\rho \in (0, r)$ and set $\delta := \rho / r$. We claim that
\begin{equation}\label{eq: dZ bound} N_{d_M}(B_{d_{\Z}}(\varphi,r), 2\rho) \leq N_{d_M}(F(B_{d_{\Z}}(\varphi,r)), \delta).
\end{equation}
To see that, let $U_1, \ldots, U_N$ be an open cover of $F(B_{d_{\Z}}(\varphi,r))$ with sets of diameter at most $\delta$ in the metric $d_M$ and such that $N = N_{d_M}(F(B_{d_{\Z}}(\varphi,r)), \delta)$. Pick $F(\psi_i) \in U_i$ with $\psi_i \in B_{d_{\Z}}(\varphi,r)$ for each $i = 1, \ldots, N$. We claim that collection $\{ B_{d_M}(\psi_i, \rho) : i = 1, \ldots, N \}$ is a cover of $B_{d_{\Z}}(\varphi,r)$. Indeed, given $\psi \in B_{d_{\Z}}(\varphi,r)$, pick $i \in \{1, \ldots, N\}$ such that $d_M(F(\psi), F(\psi_i)) < \delta$. Then by \eqref{eq: F dist equivariance} and \eqref{eq: F similarity}
\[
\begin{split}
d_M(\psi, \psi_i) &= \max \limits_{0 \leq n < M} \{ d(\sigma^n \psi, \sigma^n \psi_i) \} = r \max \limits_{0 \leq n < M} \{ d(F(\sigma^n \psi), F(\sigma^n \psi_i)) \}\\
& = r \max \limits_{0 \leq n < M} \{ d(\sigma^n ( F( \psi)), \sigma^n( F( \psi_i))) \} \\
& = rd_M(F(\psi), F(\psi_i)) \leq r\delta = \rho.
\end{split}\]
This proves \eqref{eq: dZ bound}. If $\delta = \rho / r \leq \delta_0$, then \eqref{eq: dZ bound} together with \eqref{eq: mmdim upper bound} gives for $M \geq M(r, \rho) := M(\delta)$
\[ N_{d_M}(B_{d_{\Z}}(\varphi,r), 2\rho) \leq (r / \rho)^{M(\mmdim(B_1(V[a,b]), \sigma, d) + \eps)}. \]
On the other hand, if $\delta \geq \delta_0$, then \eqref{eq: dZ bound} and \eqref{eq: mmdim upper bound} give (as $0 < \rho < r$)
\[
\begin{split}
N_{d_M}(B_{d_{\Z}}(\varphi,r), 2\rho) & \leq N_{d_M}(F(B_{d_{\Z}}(\varphi,r)), \delta) \leq N_{d_M}(F(B_{d_{\Z}}(\varphi,r)), \delta_0) \\
& \leq (1 / \delta_0)^{ M(\mmdim(B_1(V[a,b]), \sigma, d) + \eps)} \\
& \leq (r / \rho) ^{M(\mmdim(B_1(V[a,b]), \sigma, d) + \eps)} (1 / \delta_0 ) ^{M(\mmdim(B_1(V[a,b]), \sigma, d) + \eps)}.
\end{split}
\]
The last two inequalities and Proposition \ref{prop: madim stable def}  imply  \eqref{eq: bd lim madim leq mmdim} and hence the proof is finished.
\end{proof}

\section{Mean Assouad dimension and spectrum of Bedford--McMullen carpet systems}\label{sec:carpet systems}
\subsection{Definitions and results}

Beyond the basic properties and examples, we determine the precise formulae for the mean Assouad dimension and spectrum of any Bedford--McMullen carpet systems, complementing the results in \cite{Tsu25carpets} where the mean Hausdorff dimension and metric mean dimension of such systems are investigated.
First, we recall the notation set up by Tsukamoto \cite{Tsu25carpets}. Let $a> b\geq 2$ be two natural numbers and set 
\[  A = \{0,1,\dots, a-1\}, \quad B = \{0,1,\dots, b-1\}. \]
Let $(A\times B)^\mathbb{N}$ be the one-sided full-shift defined on the alphabet $A\times B$
equipped with the shift transformation $\sigma: (A\times B)^{\mathbb{N}}\to (A\times B)^{\mathbb{N}}$.
Let $\pi: (A\times B)^{\mathbb{N}}\to B^{\mathbb{N}}$ be the natural projection.
By abuse of notation, we also denote by $\sigma:B^{\mathbb{N}}\to B^{\mathbb{N}}$ the shift map on $B^{\mathbb{N}}$.

Let $[0,1]^\mathbb{N}$ be the Hilbert cube. Consider the product
$[0,1]^{\mathbb{N}}\times [0,1]^{\mathbb{N}}$ equipped with the standard product metric $d$ by 
\begin{equation}  \label{eq: metric on infinite dimensional carpets}
   d\left((x, y), (x^\prime, y^\prime)\right)  = \sum_{n=1}^\infty 2^{-n} \max\left\{|x_n-x^\prime_n|, |y_n-y^\prime_n|\right\}, 
\end{equation}   
where $x = (x_n)_{n\in \mathbb{N}}, y= (y_n)_{n\in \mathbb{N}}$ and 
$x^\prime = (x^\prime_n)_{n\in \mathbb{N}}, y^\prime = (y^\prime_n)_{n\in \mathbb{N}}$ are points in $[0,1]^\mathbb{N}$.
We define a shift map on $[0,1]^{\mathbb{N}}\times [0,1]^{\mathbb{N}}$ 
by 
\[  \sigma\left((x_n)_{n\in \mathbb{N}}, (y_n)_{n\in \mathbb{N}}\right) 
    = \left((x_{n+1})_{n\in \mathbb{N}}, (y_{n+1})_{n\in \mathbb{N}}\right). \]

Let $\Omega\subset (A\times B)^{\mathbb{N}}$ be a subshift, namely a closed subset with $\sigma(\Omega) \subset \Omega$.
We always assume that $\Omega$ is non-empty. We consider the restriction of $\pi:(A\times B)^{\mathbb{N}}\to B^{\mathbb{N}}$ to $\Omega$ and still denote it by $\pi:\Omega\to \pi(\Omega)$. Set $\Omega^\prime = \pi(\Omega)$, which is a subshift of $B^{\mathbb{N}}$. 
We define a \textbf{Bedford--McMullen carpet system} $X_\Omega \subset [0,1]^{\mathbb{N}}\times [0,1]^{\mathbb{N}}$ by\footnote{ Let us emphasize that a Bedford--McMullen carpet system $X_\Om$ is an infinite-dimensional subset of the infinite-dimensional space $[0,1]^\N \times [0,1]^\N$ and not a subset of $[0,1]^2$. The projection of $X_\Om$ onto the first $2N$ coordinates gives a subset of $[0,1]^N \times [0,1]^N$ which is a self-affine set of a Bedford--McMullen carpet type. The connection between finite dimensional projections and the system $X_\Om$ is explained with more details in Section \ref{sec: proj adim BM}.}
\[  X_\Omega = \left\{\left(\sum_{m=1}^\infty \frac{x_m}{a^m}, \sum_{m=1}^\infty \frac{y_m}{b^m}\right) 
      \in [0,1]^{\mathbb{N}} \times [0,1]^{\mathbb{N}} \middle|\, 
     (x_m, y_m)\in \Omega \text{ for all $m \geq 1$}\right\}. \]
Here for $x_m\in A^{\mathbb{N}}\subset \ell^\infty$, we consider the summation 
$\sum_{m=1}^\infty \frac{x_m}{a^m}$ in $\ell^\infty$. 
Then $\sum_{m=1}^\infty \frac{x_m}{a^m} \in [0,1]^{\mathbb{N}}$.
The same idea is applied to the term $\sum_{m=1}^\infty \frac{y_m}{b^m}$.
In this way we obtain a $\sigma$-invariant subset $X_{\Omega}$.
Then $(X_\Omega, \sigma)$ is a subsystem of 
$\left([0,1]^{\mathbb{N}}\times [0,1]^{\mathbb{N}}, \sigma\right)$.

\begin{thm} \label{theorem: mean Hausdorff dimension and metric mean dimension of carpet systems}\cite[Theorem 5.3]{Tsu25carpets}
The mean Hausdorff dimension and metric mean dimension of 
$(X_\Omega, \sigma, d)$ are given by 
\begin{equation}\label{eq: metric mean dimension of carpet systems}
    \begin{aligned}
   \mhdim\left(X_\Omega, \sigma, d\right) & =  \frac{\htop^{\log_a b}\left(\pi, \Omega, \sigma\right)}{\log b}, \\
   \mmdim\left(X_\Omega, \sigma, d\right) & = \frac{\htop\left(\Omega, \sigma\right)}{\log a} + 
      \left(\frac{1}{\log b} - \frac{1}{\log a}\right) \htop\left(\Omega^\prime, \sigma\right),
\end{aligned}
\end{equation}
where $\htop^{\log_a b}\left(\pi, \Omega, \sigma\right)$ is the weighted topological entropy\footnote{We refrain from defining weighted topological entropy since we do not discuss the mean Hausdorff dimension of $\left(X_\Omega, \sigma, d\right)$ in this paper. Instead, we refer the reader to \cite{FH16,Tsuweighted} for two different approaches to define the weighted topological entropy.} of the factor map 
$\pi:(\Omega, \sigma) \to \left(\Omega^\prime, \sigma\right)$ with respect to the weight $\log_a b$.
\end{thm}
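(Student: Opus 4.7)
The plan is to exploit the layered product structure of $X_\Omega$. Each point corresponds to a sequence $((x_m, y_m))_{m \geq 1}$ with $(x_m, y_m) \in \Omega \subset (A\times B)^{\N}$, where $x_m \in A^{\N}$ and $y_m \in B^{\N}$ encode the $m$-th $a$-ary digit of every coordinate $x_n$ and the $m$-th $b$-ary digit of every $y_n$, respectively. The shift $\sigma$ on $X_\Omega$ is the simultaneous $n$-direction shift applied at every level $m$, with each level independently constrained to lie in $\Omega$. The proof of each formula reduces to counting admissible configurations at suitable depths in the level index $m$.

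For the metric mean dimension, the key combinatorial observation is that two points $(x,y), (x',y') \in X_\Omega$ are $\varepsilon$-close in the Bowen metric $d_M$ if and only if their digit configurations agree for $1 \leq m \leq M_1 := \lfloor \log(1/\varepsilon)/\log a \rfloor$ in both the $A$- and $B$-components, and for $M_1 < m \leq M_2 := \lfloor \log(1/\varepsilon)/\log b \rfloor$ in the $B$-component alone, up to boundary corrections of size $O(\log(1/\varepsilon))$ arising from the summable tail of the metric $d$ defined in \eqref{eq: metric on infinite dimensional carpets}. Since level $m \leq M_1$ independently contributes a word from $\mathcal{L}_M(\Omega)$ while level $M_1 < m \leq M_2$ contributes a word from $\mathcal{L}_M(\Omega^\prime)$, where $\mathcal{L}_M(\cdot)$ denotes the language of admissible words of length $M$, this yields
\[ N_{d_M}(X_\Omega, \varepsilon) \asymp |\mathcal{L}_M(\Omega)|^{M_1} \cdot |\mathcal{L}_M(\Omega^\prime)|^{M_2 - M_1}. \]
Taking logarithms, dividing by $M$ and sending $M \to \infty$ gives the rate $M_1 \htop(\Omega, \sigma) + (M_2 - M_1) \htop(\Omega^\prime, \sigma)$. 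Dividing once more by $\log(1/\varepsilon)$ and letting $\varepsilon \to 0$, and using $M_1/\log(1/\varepsilon) \to 1/\log a$ and $(M_2 - M_1)/\log(1/\varepsilon) \to 1/\log b - 1/\log a$, produces the claimed expression for $\mmdim(X_\Omega, \sigma, d)$.

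For the mean Hausdorff dimension formula, the argument is substantially more delicate and is the main obstacle. Unlike the metric mean dimension, which uses balls of uniform radius $\varepsilon$, the Hausdorff-type sum $\sum_i (\diam U_i)^s$ is optimized by anisotropic rectangles of dimensions roughly $a^{-m_1} \times b^{-m_2}$ with $m_1 \leq m_2$, adapted to the differing horizontal and vertical scaling rates. Such a rectangle is specified by a word of length $m_1$ in $\mathcal{L}_M(\Omega)$ together with an extension of length $m_2 - m_1$ in $\mathcal{L}_M(\Omega^\prime)$ obtained via the factor map $\pi$. Optimizing combinatorially over such covers introduces precisely the weight $\log_a b \in (0,1)$ governing the relative vertical-to-horizontal scaling, and the optimal exponent is identified with the weighted topological entropy $\htop^{\log_a b}(\pi, \Omega, \sigma)$. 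The harder half is the lower bound: one must exhibit large separated families realizing the weighted entropy rate, which proceeds via the Feng--Huang variational principle for weighted topological entropy applied to near-maximizing invariant measures on $\Omega$, or equivalently through Tsukamoto's direct combinatorial reformulation thereof.
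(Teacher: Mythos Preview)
This theorem is not proved in the present paper. It is quoted verbatim as \cite[Theorem 5.3]{Tsu25carpets} and serves only as background for the new results on the mean Assouad dimension; the authors explicitly state in the footnote that they do not even define the weighted topological entropy appearing in the mean Hausdorff formula. Consequently there is no in-paper proof to compare your proposal against.

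As for the content of your sketch: the metric mean dimension paragraph is a reasonable outline of the covering argument and matches the spirit of the approximate-square machinery the paper does develop (for the Assouad computation) in Section~\ref{sec:carpet systems}. The mean Hausdorff paragraph, however, is not a proof but a narrative of where the difficulty lies; invoking ``the Feng--Huang variational principle'' or ``Tsukamoto's direct combinatorial reformulation'' for the lower bound is a pointer to the literature rather than an argument. If you intend to supply an actual proof, you would need to carry out the weighted-entropy lower bound explicitly, which is the substantial part of Tsukamoto's original paper.
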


We calculate the mean Assouad dimension and spectrum of a carpet system with respect to the metric \eqref{eq: metric on infinite dimensional carpets}.
\begin{thm} \label{theorem: mean Assouad dimension of carpet systems}
The mean Assouad dimension of $(X_\Omega, \sigma, d)$ is given by 
\begin{equation}\label{eq: mean Assouad dimension of carpet systems}
   \madim\left(X_\Omega, \sigma, d\right)  = \frac{\htop(\Omega\vert\Omega', \sigma)}{\log a} + 
      \frac{\htop(\Omega', \sigma)}{\log b}.
\end{equation}
Moreover, for all $\theta\in(0,1)$ we have
\begin{equation*}\label{eq: mean Assouad spectrum of carpet systems}
\begin{split}
    &\madim^{\theta}\left(X_\Omega, \sigma, d\right)  \\
    &= \frac{\mmdim\left(X_\Omega, \sigma, d\right)-\theta\left(\left(\frac{1}{\log a}-\frac{1}{\log b}\right)\htop(\Omega\vert\Omega', \sigma)+\frac{\htop(\Omega,\sigma)}{\log b}\right)}{1-\theta}
    \wedge \  \madim\left(X_\Omega, \sigma, d\right),
\end{split}
\end{equation*}
where $\htop(\Omega\vert\Omega', \sigma)$ is the topological conditional entropy\footnote{See Subsection \ref{defn:topological conditional entropy}.} of the factor map 
$\pi:(\Omega, \sigma) \to \left(\Omega^\prime, \sigma\right)$.
\end{thm}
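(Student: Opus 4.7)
The plan is to reduce the count $N_{d_M}(B_{d_M}((u,v), r), \rho)$ to a combinatorial problem about the subshift $\Omega$ and its projection $\Omega'$, and then carry out a two-case analysis analogous to Mackay's \cite{Mac11} and Fraser's \cite{Fra14} computation of the Assouad dimension of planar Bedford--McMullen carpets, but worked out symbolically in the Bowen metric $d_M$. A point in $X_\Omega$ is encoded by an infinite sequence $(x_m, y_m)_{m\geq 1}$ with each $(x_m, y_m)\in \Omega \subset A^{\N}\times B^{\N}$, and the Hilbert-cube coordinates are $u_k = \sum_{m\geq 1} x_{m,k}/a^m$, $v_k = \sum_{m\geq 1} y_{m,k}/b^m$. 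Since $d$ is an exponentially weighted $\ell^\infty$-distance between coordinates and each coordinate is an $a$-adic (resp.\ $b$-adic) sum, the Bowen ball $B_{d_M}((u,v), \varepsilon)$ is equivalent, up to constants absorbable into $e^{o(M)}$ factors, to fixing the digits $x_{m,k}$ for $m \leq M_a(\varepsilon):=\lfloor\log_a(1/\varepsilon)\rfloor$ and $k \leq M + O(\log(1/\varepsilon))$, together with $y_{m,k}$ for $m \leq M_b(\varepsilon):=\lfloor\log_b(1/\varepsilon)\rfloor$; since $a>b$ one has $M_b(\varepsilon) > M_a(\varepsilon)$. The first step is to justify this reduction carefully and show that $N_{d_M}(B_{d_M}((u,v),r),\rho)$ equals, up to a factor of $e^{o(M)}$ as $M\to\infty$, the number of ways to extend the symbols fixed by the $r$-ball to those fixed by a $\rho$-ball, subject to $(x_m, y_m) \in \Omega$ for every $m$. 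The key structural point is that these subshift constraints act independently across different $m$, so the count factors.

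The second step is the case analysis. Set $M_a = M_a(r),\ N_a = M_a(\rho),\ M_b = M_b(r),\ N_b = M_b(\rho)$. One always has $M_a < \min(N_a, M_b)$ and $\max(N_a, M_b) < N_b$, while the relative order of $N_a$ and $M_b$ depends on $\log(1/\rho)/\log(1/r)$. In Case 1 ($N_a \leq M_b$, i.e.\ $\theta \geq \log_a b$ in the spectrum setting), the new $x$-levels $M_a < m \leq N_a$ sit under already-fixed $y$-levels, so each contributes $\sim e^{M(\htop(\Omega\vert\Omega',\sigma) + o(1))}$ by the characterization of topological conditional entropy as fibre entropy, while the new $y$-levels $M_b < m \leq N_b$ are unconstrained on the $x$-side and contribute $\sim e^{M(\htop(\Omega',\sigma)+o(1))}$ each; summing log-counts and dividing by $M\log(r/\rho)$ yields the exponent $\frac{\htop(\Omega\vert\Omega',\sigma)}{\log a} + \frac{\htop(\Omega',\sigma)}{\log b}$. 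In Case 2 ($N_a > M_b$, i.e.\ $\theta < \log_a b$), one gains an additional range $M_b < m \leq N_a$ in which $x$ and $y$ are fixed simultaneously, contributing $\sim e^{M(\htop(\Omega,\sigma)+o(1))}$ per $m$; writing $\alpha = \log(1/r)$ and $\beta = \log(1/\rho) = \alpha/\theta$ and assembling the three contributions reproduces precisely the Case 2 expression of the theorem. The two formulae agree at $\theta = \log_a b$, and the $\min$ in the stated spectrum reflects the fact that the Case 2 expression exceeds the Case 1 value once $\theta > \log_a b$. The mean Assouad dimension is the supremum of the effective exponent over all $0 < \rho < r$, which is attained in Case 1 and equals $\frac{\htop(\Omega\vert\Omega',\sigma)}{\log a} + \frac{\htop(\Omega',\sigma)}{\log b}$.

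The above count gives the upper bound. The matching lower bound is established by exhibiting explicit $(M,\rho)$-separated subsets of $B_{d_M}((u,v), r)$: for any $\omega'\in\Omega'$ one selects $\sim e^{M\htop(\Omega\vert\Omega',\sigma)}$ distinct $M$-cylinders of $\Omega$ projecting to the initial $M$ coordinates of $\omega'$ (using the fibre-entropy characterization recalled in the excerpt), and analogously one realizes the $\htop(\Omega',\sigma)$ and $\htop(\Omega,\sigma)$ counts in the other ranges of $m$; products of such choices across different $m$-levels yield $\rho$-separated families inside the $r$-ball, which convert into the required covering lower bound via the standard inequalities \eqref{eq: covering sep numbers ineq} relating covering and separating numbers. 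The main obstacle will be the careful bookkeeping in Step 1: because $d$ is an exponentially weighted sum rather than a sharp cutoff, ``the number of coordinates fixed by an $\varepsilon$-ball'' is only approximate, so one must show that the fringe effects in the $k$-direction (tails of the $2^{-k}$ weights) and the boundary effects near $m = M_a, M_b$ contribute only $e^{o(M)}$ factors after dividing by $M$. A secondary technical point is realizing the conditional entropy count fibre-wise uniformly across base sequences $\omega' \in \Omega'$ and uniformly across different values of $m\in(M_a, M_b]$, which follows from the symbolic description of topological conditional entropy applied independently on each level $m$.
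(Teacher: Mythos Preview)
Your approach is essentially the same as the paper's: the paper carries out precisely this two-case combinatorial count (your Cases 1 and 2 correspond to the paper's Cases 2 and 1, respectively, with the threshold at $\theta=\log_a b$), and obtains the lower bound by choosing base symbols $y_m$ that realise the maximal fibre count $\sup_{v\in\Omega'|_N}|\pi_N^{-1}(v)|$ on the relevant levels. The paper handles your ``Step 1 bookkeeping'' more cleanly by first proving a separate reduction lemma (the analogue of Lemma \ref{equivalent defn}) that replaces $(d_M,M)$ by the $\ell^\infty$ metric on the finite projection $X_\Omega|_N$, and then works with approximate squares $Q_{N,r}(x,y)$ in place of metric balls---this makes the digit-fixing exact rather than approximate and eliminates the $e^{o(M)}$ fringe terms you anticipated.
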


It is well-known the Minkowski and Assouad dimensions of a planar Bedford--McMullen carpet coincide if and only if the carpet has \textit{uniform fibres}. We exhibit a dynamical counterpart for carpet systems. 
\begin{cor}
    For a carpet system $(X_{\Omega},\sigma)$ equipped with the metric \eqref{eq: metric on infinite dimensional carpets}, equality $\mmdim\left(X_\Omega, \sigma, d\right)=\madim\left(X_\Omega, \sigma, d\right)$ holds if and only if $\htop(\Omega,\sigma)=\htop(\Omega',\sigma)+\htop(\Omega\vert\Omega',\sigma)$.
    Moreover, if $\mmdim\left(X_\Omega, \sigma, d\right)=\madim\left(X_\Omega, \sigma, d\right)$, then $\mhdim\left(X_\Omega, \sigma, d\right)=\mmdim\left(X_\Omega, \sigma, d\right)$.
    If $\mhdim\left(X_\Omega, \sigma, d\right)=\mmdim\left(X_\Omega, \sigma, d\right)$ and the function $\Omega'\ni y\mapsto \htop(\pi^{-1}(y),\sigma)$ is constant, then $\mmdim\left(X_\Omega, \sigma, d\right)=\madim\left(X_\Omega, \sigma, d\right)$.
\end{cor}
\begin{proof}
    It is direct from \eqref{eq: metric mean dimension of carpet systems} and \eqref{eq: mean Assouad dimension of carpet systems} that $\mmdim\left(X_\Omega, \sigma, d\right)=\madim\left(X_\Omega, \sigma, d\right)$ if and only if 
    \begin{equation}\label{eq: mdimm = mdima}
    \htop(\Omega,\sigma)=\htop(\Omega',\sigma)+\htop(\Omega\vert\Omega',\sigma).
    \end{equation}

    For a \textit{TDS} $(X,T)$, denote by $\mathcal{M}^{T}(X)$ the set of $T$-invariant Borel probability measures on $X$ and denote by $h_{\mu}(X,T)$ the Kolmogorov--Sinal entropy of the measure $\mu\in\mathcal{M}^{T}(X)$.
    By the variational principle for weighted topological entropy (\cite[Theorem 1.3]{Tsuweighted}),
    \[\htop^{\log_a b}\left(\pi, \Omega, \sigma\right)=\sup_{\mu\in\mathcal{M}^{\sigma}(\Omega)}\{(\log_a b)h_{\mu}(\Omega,\sigma)+(1-\log_a b)h_{\pi_*\mu}(\Omega',\sigma)\}\]
    and therefore
    \[\mhdim\left(X_\Omega, \sigma, d\right)=\sup_{\mu\in\mathcal{M}^{\sigma}(\Omega)}\left\{\frac{h_{\mu}(\Omega,\sigma)}{\log a}+\left(\frac{1}{\log b}-\frac{1}{\log a}\right)h_{\pi_*\mu}(\Omega',\sigma)\right\},\]
    where $\pi_*\mu$ is the push-forward measure of $\mu$ by $\pi:\Omega\to\Omega'$. By the classical variational principle for topological entropy,
    \[\mmdim\left(X_\Omega, \sigma, d\right)  = \sup_{\mu\in\mathcal{M}^{\sigma}(\Omega)}\frac{h_{\mu}(\Omega,\sigma)}{\log a} + 
      \left(\frac{1}{\log b} - \frac{1}{\log a}\right) \sup_{\nu\in\mathcal{M}^{\sigma}(\Omega')}h_{\nu}(\Omega',\sigma).\]
      We conclude that $\mhdim\left(X_\Omega, \sigma, d\right)=\mmdim\left(X_\Omega, \sigma, d\right)$ if and only if there exists $\mu\in\mathcal{M}^{\sigma}(\Omega)$ satisfying
      \begin{equation}\label{eq: mdimm = mdimh}
          \htop(\Omega,\sigma)=h_{\mu}(\Omega,\sigma) \text{ and } \htop(\Omega',\sigma)=h_{\pi_*\mu}(\Omega',\sigma).
      \end{equation}
      We now assume that $\mmdim\left(X_\Omega, \sigma, d\right)=\madim\left(X_\Omega, \sigma, d\right)$. We choose $\mu\in\mathcal{M}^{\sigma}(\Omega)$ such that $\htop(\Omega,\sigma)=h_{\mu}(\Omega,\sigma)$. Let $\nu=\pi_*\mu$. By the relative variational principle (\cite[(1.2)]{LedWal77}),
      \begin{equation*}
          \begin{split}
              \htop(\Omega,\sigma) &= h_{\mu}(\Omega,\sigma) = \sup_{\eta\in\mathcal{M}^{\sigma}(\Omega): \pi_*\eta=\nu} h_{\eta}(\Omega,\sigma)\\
              &= h_{\nu}(\Omega',\sigma)+\int_{\Omega'}\htop(\pi^{-1}(y),\sigma)d\nu(y)\\
              &\leq \htop(\Omega',\sigma) + \sup_{y\in\Omega'}\htop(\pi^{-1}(y),\sigma)\\
              &\overset{\eqref{eq:fiber entropy}}{=}\htop(\Omega',\sigma) + \htop(\Omega\vert\Omega',\sigma)\overset{\eqref{eq: mdimm = mdima}}{=}\htop(\Omega,\sigma).
          \end{split}
      \end{equation*}
      We obtain that $h_{\nu}(\Omega',\sigma)+\int_{\Omega'}\htop(\pi^{-1}(y),\sigma)d\nu(y)
              = \htop(\Omega',\sigma) + \sup_{y\in\Omega'}\htop(\pi^{-1}(y),\sigma)$ and thus $\htop(\Omega',\sigma)=h_{\nu}(\Omega',\sigma)$. Therefore \eqref{eq: mdimm = mdimh} holds, hence $\mhdim\left(X_\Omega, \sigma, d\right)=\mmdim\left(X_\Omega, \sigma, d\right)$ as desired.

      We now assume that $\mhdim\left(X_\Omega, \sigma, d\right)=\mmdim\left(X_\Omega, \sigma, d\right)$ and $\htop(\pi^{-1}(y),\sigma)=\htop(\pi^{-1}(y'),\sigma)$ for all $y,y'\in\Omega'$. Then we may choose $\mu\in\mathcal{M}^{\sigma}(\Omega)$ such that \eqref{eq: mdimm = mdimh} holds. Let $\nu=\pi_*\mu$. Once again, by the relative variational principle
      \begin{equation*}
          \begin{split}
              \htop(\Omega,\sigma) &= h_{\mu}(\Omega,\sigma)= \sup_{\eta\in\mathcal{M}^{\sigma}(\Omega): \pi_*\eta=\nu} h_{\eta}(\Omega,\sigma)\\
              &= h_{\nu}(\Omega',\sigma)+\int_{\Omega'}\htop(\pi^{-1}(y),\sigma)d\nu(y)\\
              &= \htop(\Omega',\sigma) + \sup_{y\in\Omega'}\htop(\pi^{-1}(y),\sigma)\\
              &\overset{\eqref{eq:fiber entropy}}{=}\htop(\Omega',\sigma) + \htop(\Omega\vert\Omega',\sigma)
          \end{split}
      \end{equation*}
      and hence $\mmdim\left(X_\Omega, \sigma, d\right)=\madim\left(X_\Omega, \sigma, d\right)$, which finishes the proof.
\end{proof}

Observe that, by rearranging the formulae given in Theorems \ref{theorem: mean Hausdorff dimension and metric mean dimension of carpet systems} and \ref{theorem: mean Assouad dimension of carpet systems}, we immediately obtain an alternative expression of the mean Assouad spectrum of $(X_\Omega, \sigma, d)$ as follows.
\begin{cor}\label{cor: BM spectrum alternative}
    The mean Assouad spectrum of $(X_\Omega, \sigma, d)$ is completely determined by the ratio $\log b/\log a$, the metric mean dimension and the mean Assouad dimension of $(X_\Omega, \sigma, d)$. More precisely, we have
    \begin{equation*}
      \begin{split}
    &\madim^{\theta}\left(X_\Omega, \sigma, d\right)  \\
    &= \frac{\mmdim\left(X_\Omega, \sigma, d\right)-\theta\left(\madim\left(X_\Omega, \sigma, d\right)-\left(\madim\left(X_\Omega, \sigma, d\right)-\mmdim\left(X_\Omega, \sigma, d\right)\right)\frac{\log a}{\log b}\right)}{1-\theta}
      \end{split}
    \end{equation*}
    if $\theta\in(0,\log b/\log a]$, and
    \begin{equation*}
        \madim^{\theta}\left(X_\Omega, \sigma, d\right)=\madim\left(X_\Omega, \sigma, d\right)
    \end{equation*}
    if $\theta\in(\log b/\log a,1)$.
\end{cor}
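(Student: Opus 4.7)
The plan is to treat this as a purely algebraic rearrangement of the formulae already established in Theorems \ref{theorem: mean Hausdorff dimension and metric mean dimension of carpet systems} and \ref{theorem: mean Assouad dimension of carpet systems}. The only content beyond bookkeeping is locating the threshold $\theta = \log b / \log a$ at which the minimum in the expression for $\madim^{\theta}$ switches from one branch to the other.

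First, I would introduce the shorthand $h = \htop(\Omega, \sigma)$, $h' = \htop(\Omega', \sigma)$, and $h'' = \htop(\Omega \mid \Omega', \sigma)$, and record the two identities
\begin{equation*}
   \mmdim(X_\Omega, \sigma, d) = \frac{h}{\log a} + \left(\frac{1}{\log b} - \frac{1}{\log a}\right)h', \qquad
   \madim(X_\Omega, \sigma, d) = \frac{h''}{\log a} + \frac{h'}{\log b}.
\end{equation*}
Treat these as two linear equations in the unknowns that appear inside the expression from Theorem \ref{theorem: mean Assouad dimension of carpet systems}, namely $\left(\tfrac{1}{\log a} - \tfrac{1}{\log b}\right) h'' + \tfrac{h}{\log b}$. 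A direct (and short) computation expands $\madim - (\madim - \mmdim)\tfrac{\log a}{\log b}$ using the shorthand above, and after cancellation yields exactly $\left(\tfrac{1}{\log a} - \tfrac{1}{\log b}\right) h'' + \tfrac{h}{\log b}$. Substituting this identity into the statement of Theorem \ref{theorem: mean Assouad dimension of carpet systems} immediately rewrites the first branch of $\madim^\theta$ in the form stated in the corollary.

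Next, I would analyze the minimum $\wedge$ appearing in Theorem \ref{theorem: mean Assouad dimension of carpet systems}. Writing $A := \madim - (\madim - \mmdim)\tfrac{\log a}{\log b}$, I would solve the inequality
\begin{equation*}
    \frac{\mmdim - \theta A}{1 - \theta} \leq \madim
\end{equation*}
for $\theta$. Rearranging gives $(\madim - \mmdim)\bigl(1 - \theta \tfrac{\log a}{\log b}\bigr) \geq 0$, so, provided $\madim > \mmdim$, the first branch is smaller precisely when $\theta \leq \log b / \log a$, and the constant branch $\madim$ is smaller otherwise. If $\madim = \mmdim$, the two expressions coincide identically in $\theta$, so the corollary's two-case formulation still holds trivially.

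No serious obstacle is expected; the argument is essentially a change of variables from $(h, h', h'')$ to $(\mmdim, \madim, \log a / \log b)$. The only mildly subtle point is verifying that the coefficient of $h'$ (with the $1/(\log b)^2$ term) cancels in the expansion of $\madim - (\madim - \mmdim)\tfrac{\log a}{\log b}$, which I would write out carefully on a single line to make the cancellation transparent. Once that identity is noted, the location of the transition at $\theta = \log b / \log a$ follows by a one-line sign analysis, completing the proof.
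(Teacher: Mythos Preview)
Your proposal is correct and follows exactly the same approach as the paper, which simply states that the corollary is obtained ``by rearranging the formulae given in Theorems \ref{theorem: mean Hausdorff dimension and metric mean dimension of carpet systems} and \ref{theorem: mean Assouad dimension of carpet systems}'' without further detail. Your write-up just makes this algebraic rearrangement explicit, including the identification of the threshold $\theta=\log b/\log a$ via the sign of $(\madim-\mmdim)\bigl(1-\theta\tfrac{\log a}{\log b}\bigr)$.
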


\begin{rem}
    Recall Proposition \ref{bi-Lipschitz invariants} yields that bi-Lipschitz isomorphism preserves the mean Assouad spectrum. As a by-product of the formula in Corollary \ref{cor: BM spectrum alternative}, we see that the ratio $\log b/\log a$ is a bi-Lipschitz invariant of dynamical systems in the class of Bedford--McMullen carpet systems equipped with the metric defined by \eqref{eq: metric on infinite dimensional carpets}.
\end{rem}

\subsection{Preparations for the proofs}\label{preparations}
 Fix $N\in\N$ and denote by $\Omega|_N$ and $\Omega^\prime|_N$ the images of $\Omega$ and $\Omega^\prime$ 
under the projections
\begin{equation*}\label{projection I}
\begin{split}
    (A\times B)^{\mathbb{N}} & \to  A^N \times B^N,  \quad 
     \left((u_n)_{n\in \mathbb{N}}, (v_n)_{n\in \mathbb{N}}\right) \mapsto \left((u_1, \dots, u_N), (v_1, \dots, v_N)\right), \\
  B^{\mathbb{N}} & \to B^N,  \quad    (v_n)_{n\in \mathbb{N}} \mapsto (v_1, \dots, v_N).
\end{split}
\end{equation*}
Let $\pi_N:\Omega|_N\to\Omega^\prime|_N$ be the natural projection map. 

We start to calculate the topological conditional entropy $\htop(\Omega\vert\Omega', \sigma)$ of the factor map 
$\pi:(\Omega, \sigma) \to \left(\Omega^\prime, \sigma\right)$, which will be used later.
\begin{lem}\label{formula:topological conditional entropy of subshifts}
    In the above setting, we have
    \begin{equation*}
        \htop(\Omega\vert\Omega', \sigma)=\lim_{N\to\infty}\frac{\sup_{v\in \Omega'|_{N}}\log|\pi_{N}^{-1}(v)|}{N},
    \end{equation*}
    where $|\pi_N^{-1}(v)|$ is the cardinality of $\pi_N^{-1}(v)\subset\Omega\vert_N$.
    
    Notice that the above limit exists since $\sup_{v\in\Omega'\vert_N}|\pi_N^{-1}(v)|$ is sub-multiplicative in $N$.
\end{lem}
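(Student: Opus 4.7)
The plan is structured in three steps: first establishing existence of the limit on the right-hand side via sub-multiplicativity, then translating the topological conditional entropy into a count of cylinder prefixes using a convenient metric on $\Omega$, and finally matching the two counting quantities at the exponential scale.

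For the sub-multiplicativity (already used by the lemma), fix $N_1, N_2 \in \N$ with $N = N_1 + N_2$ and $v \in \Omega'|_N$. Write $v = v^{(1)} v^{(2)}$ with $v^{(j)} \in \Omega'|_{N_j}$. Given any $w \in \pi_N^{-1}(v)$, the shift-invariance of $\Omega$ gives the decomposition $w = w^{(1)} w^{(2)}$ with $w^{(j)} \in \pi_{N_j}^{-1}(v^{(j)})$, yielding $|\pi_N^{-1}(v)| \leq |\pi_{N_1}^{-1}(v^{(1)})| \cdot |\pi_{N_2}^{-1}(v^{(2)})|$. Taking suprema gives sub-multiplicativity of $N \mapsto \sup_{v \in \Omega'|_N} |\pi_N^{-1}(v)|$, and the limit exists by Fekete's lemma.

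For the identification with the topological conditional entropy, I would equip $\Omega$ with the symbolic metric $d(x,x') = 2^{-n}$ where $n = \min\{i \geq 0 : x_i \neq x'_i\}$; since topological conditional entropy is a topological invariant, the choice of a compatible metric is immaterial. A direct computation shows that for $M \in \N$ and $k \geq 1$, the $M$-th Bowen ball of radius $2^{-k}$ about $x$ coincides with the length-$(M+k-1)$ cylinder through $x$. Hence $N_{d_M}(\pi^{-1}(y), 2^{-k})$ equals the number of distinct length-$(M+k-1)$ prefixes of elements of $\pi^{-1}(y)$, which I denote $g_{M+k-1}(y)$. Plugging this into the definition of $\htop(\Omega\vert\Omega',\sigma)$ in Subsection \ref{defn:topological conditional entropy} and changing variables yields $\htop(\Omega\vert\Omega',\sigma) = \lim_{N \to \infty} \frac{\log g_N}{N}$, where $g_N := \sup_y g_N(y)$.

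It remains to show $\lim_N \frac{\log g_N}{N} = \lim_N \frac{1}{N}\log\sup_v |\pi_N^{-1}(v)|$. The easy direction $g_N \leq \sup_v |\pi_N^{-1}(v)|$ is immediate, since length-$N$ prefixes of $\pi^{-1}(y)$ all lie in $\pi_N^{-1}(\pi_N(y))$. For the reverse, I would fix $v^* \in \Omega'|_N$ achieving the supremum, extend each $w \in \pi_N^{-1}(v^*)$ to some $x_w \in \Omega$ with length-$N$ prefix $w$, and study the projections $y_w = \pi(x_w)$, which all lie in the compact clopen cylinder $C_{v^*} = \{y \in \Omega' : \pi_N(y) = v^*\}$. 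A diagonal pigeonhole argument over successively longer sub-cylinders of $C_{v^*}$, combined with the upper semi-continuity of $y \mapsto g_N(y)$, should produce $\bar y \in \Omega'$ retaining a sub-exponential fraction of the $w$'s as length-$N$ prefixes of $\pi^{-1}(\bar y)$.

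The main obstacle is precisely this final step: a naive iterated pigeonhole loses a factor of $|\Omega'|_M|$ at each level, which grows at the exponential rate $\htop(\Omega', \sigma)$, and therefore risks destroying the exponential growth coming from $\sup_v |\pi_N^{-1}(v)|$. Careful interleaving of the pigeonhole with a compactness argument on the sequence of $y_w$'s, making use of the finiteness of $\pi_N^{-1}(v^*)$ together with upper semi-continuity, should preserve the exponential growth rate and yield the matching of the two limits.
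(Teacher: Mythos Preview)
Your reduction is correct: with a symbolic metric on $\Omega$, the conditional entropy becomes $\lim_N\frac{1}{N}\log g_N$ with $g_N=\sup_{y\in\Omega'}g_N(y)$ the maximal number of length-$N$ prefixes of a fibre, and the bound $g_N\le f_N:=\sup_v|\pi_N^{-1}(v)|$ is immediate. But the reverse inequality is left genuinely open in your argument, and the obstacle you flag is real: iterated pigeonholing over longer $\Omega'$-cylinders costs a factor exponential in the cylinder length, and upper semi-continuity of $y\mapsto g_N(y)$ does not by itself repair this. In fact one cannot expect $g_N(\bar y)\approx f_N$ for any single $\bar y$: for the subshift $\Omega=\{(x,y):x_n=y_{n+1}\text{ for all }n\}$ (with $B$ embedded in $A$) every fibre $\pi^{-1}(y)$ is a singleton, so $g_N\equiv 1$, while $f_N=b$ for every $N$. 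The lemma still holds here only because both exponential rates vanish, so the matching must be argued at the level of growth rates, not term by term.

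The paper's lower bound takes a more direct route and avoids pigeonholing altogether: fixing $y\in\Omega'$ and a reference point $(\xi,y)\in\Omega$, it forms for each $u\in\pi_N^{-1}(y|_N)$ the spliced sequence $(u_1,\dots,u_N,\xi_{N+1},\xi_{N+2},\dots;\,y)$ and declares these $|\pi_N^{-1}(y|_N)|$ points to be an $\varepsilon$-separated subset of $\pi^{-1}(y)$, yielding $\widehat N_{\dd_N}(\pi^{-1}(y),\varepsilon)\ge|\pi_N^{-1}(y|_N)|$ for every $y$ and hence $g_N\ge f_N$. This would settle the matter in one line --- but note that, as written, it tacitly assumes the spliced sequences lie in $\Omega$, which is not automatic for an arbitrary subshift (the example above again shows the splice can land outside $\Omega$, forcing $g_N<f_N$). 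So the paper's shortcut and your longer route in fact leave the same inequality unjustified in full generality; a way to close it cleanly is to construct an invariant measure on $\Omega$ concentrated near the optimal cylinder $[v_N^*]$ (via the usual empirical-measure averaging, as in the proof of the variational principle) and invoke the relative variational principle to obtain $h_\mu(\sigma)-h_{\pi_*\mu}(\sigma)\ge\alpha$, hence $\htop(\Omega\vert\Omega',\sigma)\ge\alpha$.
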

\begin{proof}
We define a metric on $(A\times B)^{\N}$ by
\begin{equation*}
    \dd((x,y),(x',y'))=2^{-\min\{n:(x_n,y_n)\neq (x'_n,y'_n)\}}.
\end{equation*}
Fix $y\in\Omega'\subset B^{\N}$. Take $m = m(\eps)\in\N$ with $2^{-m}<\varepsilon$. Let $v = v(y) =(y_1,\ldots,y_{N+m})\in\Omega'|_{N+m}$. For each $u\in A^{N+m}$ satisfying $(u,v)\in\Omega|_{N+m}\subset A^{N+m}\times B^{N+m}$, set 
\[
    U_u=\{(x,y)\in\Omega|(x_1,\ldots,x_{N+m})=u \}.
\]
Then 
\[\pi^{-1}(y)=\bigcup_{\substack{u\in A^{N+m} \\ \text{with } (u,v)\in\Omega|_{N+m}}}U_u\]
with $\diam(U_u,\dd_N)\leq\varepsilon$. Thus $N_{\dd_N}(\pi^{-1}(y),\varepsilon)\leq|\pi_{N+m}^{-1}(v)|$ for  $m=m(\eps)$ as above and
\begin{equation*}
    \begin{split}
        \htop(\Omega\vert\Omega', \sigma)&=\lim_{\varepsilon\to0}\left(\lim_{N\to\infty}\frac{\sup_{y\in \Omega'}\log N_{\dd_N}(\pi^{-1}(y),\varepsilon)}{N}\right)\\
        &\leq \lim_{\varepsilon\to0} \left( \lim_{N\to\infty}\frac{\sup_{y\in \Omega'}\log|\pi_{N+m(\eps)}^{-1}(v)|}{N} \right)  \\
        & =  \lim_{\varepsilon\to0} \left( \lim_{N\to\infty}\frac{\sup_{y\in \Omega'}\log|\pi_{N+m(\eps)}^{-1}(v)|}{N+m(\eps)} \right)  \\
        & =\lim_{N\to\infty}\frac{\sup_{v\in \Omega'|_{N}}\log|\pi_{N}^{-1}(v)|}{N}.
    \end{split}
\end{equation*}

Next, let $0<\varepsilon<\frac{1}{2}$. Let $q = q(y)=(y_1,\ldots,y_N)$. Fix $\xi\in A^{\N}$ with $(\xi,y)\in\Omega$. We consider points in $\pi^{-1}(y)$  of the form $(x,y)$, where $((x_1,\ldots,x_N),q)\in\Omega|_N$ and $x_n=\xi_n$ for $n\geq N+1$. These points form an $\varepsilon$-separated set in $\pi^{-1}(y)$ with respect to the metric $\dd_N$. There are exactly $|\pi_N^{-1}(q)|$ of such choices. Then $\widehat{N}_{\dd_N}(\pi^{-1}(y),\varepsilon)\geq|\pi_{N}^{-1}(q)|$ and
\begin{equation*}
    \begin{split}
        \htop(\Omega\vert\Omega', \sigma)&=\lim_{\varepsilon\to0}\left(\limsup_{N\to\infty}\frac{\sup_{y\in \Omega'}\log \widehat{N}_{\dd_N}(\pi^{-1}(y),\varepsilon)}{N}\right)\\
        &\geq \lim_{N\to\infty}\frac{\sup_{y\in \Omega'}\log|\pi_{N}^{-1}(q)|}{N}\\
        &=\lim_{N\to\infty}\frac{\sup_{q\in \Omega'|_{N}}\log|\pi_{N}^{-1}(q)|}{N},
    \end{split}
\end{equation*}
which completes the proof.
\end{proof}

We also define $X_\Omega |_N$ as the image of $X_\Omega$ under the projection (denoted by $\tau_N$)
\begin{equation*}\label{projection II}
    [0,1]^{\mathbb{N}}\times [0,1]^{\mathbb{N}} \to [0,1]^N \times [0,1]^N, \quad 
     \left((x_n)_{n\in \mathbb{N}}, (y_n)_{n\in \mathbb{N}}\right) \mapsto \left((x_1, \dots, x_N), (y_1, \dots, y_N)\right).
\end{equation*}
We have 
\begin{equation}\label{eq: X Omega N}
X_\Omega|_N = \left\{\left(\sum_{m=1}^\infty \frac{x_m}{a^m}, \sum_{m=1}^\infty \frac{y_m}{b^m}\right)
     \in [0,1]^N\times [0,1]^N\middle|\, (x_m,y_m)\in \Omega|_N \text{ for all } m\geq 1\right\}.
\end{equation}
For $0<\rho<r$, we define
\begin{equation*}
    S_{\infty}(X_{\Omega},r,\rho)=\lim_{N\to\infty}\frac{1}{N}\sup_{(\widetilde{x},\widetilde{y})\in X_{\Omega}} \log N_{d_{\infty}^{N}}(B_{d_{\infty}^{N}}(\tau_N(\widetilde{x},\widetilde{y}),r),\rho),
\end{equation*}
where 
\begin{equation*}
    B_{d_{\infty}^{N}}(\tau_N(\widetilde{x},\widetilde{y}),r)=\{(\xi,\eta)\in X_{\Omega}\vert_N: d_{\infty}^{N}(\tau_N(\widetilde{x},\widetilde{y}),(\xi,\eta))\leq r\}
\end{equation*}
is the ball in $X_{\Omega}\vert_N$ centred at $\tau_N(\widetilde{x},\widetilde{y})$ and of radius $r$ and $d_{\infty}^{N}$ is the $\ell^\infty$-distance on $X_\Omega|_N \subset \mathbb{R}^{2N}$. It is easy to check that the quantity $\sup_{(\widetilde{x},\widetilde{y})\in X_{\Omega}} \log N_{d_{\infty}^{N}}(B_{d_{\infty}^{N}}(\tau_N(\widetilde{x},\widetilde{y}),r),\rho)$ is sub-additive in $N$ for arbitrary $0<\rho<r$.

\begin{lem}\label{lem:equivalent definition using projection and infinity norm}
    If we define
\begin{equation*}
    \madim(X_\Omega)=\inf\{s>0: \underset{C>0}{\exists}\ \underset{0<\rho<r}{\forall}\  e^{S_{\infty}(X_\Omega,r,\rho)}\leq C(r/\rho)^s\}
\end{equation*}
and
\begin{equation*}
    \madim^{\theta}(X_\Omega)=\inf\{s>0: \underset{C>0}{\exists}\ \underset{0<r<1}{\forall}\  e^{S_{\infty}(X_\Omega,r,r^{1/\theta})}\leq C(r/r^{1/\theta})^s\}
\end{equation*} 
for $\theta\in(0,1)$.
    Then it holds $\madim(X_\Omega, \sigma, d)=\madim(X_\Omega)$ and $\madim^{\theta}(X_\Omega, \sigma, d)=\madim^{\theta}(X_\Omega)$ for all $\theta\in(0,1)$. 
\end{lem}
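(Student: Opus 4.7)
The plan is to adapt the proof of Lemma \ref{equivalent defn} to the subshift setting, replacing the full-shift metric by the carpet metric $d$. The approach rests on two metric comparison estimates between the Bowen metric $d_M$ on $X_\Omega$ and the $\ell^\infty$ metric $d_\infty^N$ on $X_\Omega|_N$: \textbf{(i)} $d_\infty^N(\tau_N p,\tau_N q)\leq 2\,d_N(p,q)$, obtained by isolating the $n=1$ term in the series defining $d(\sigma^m p,\sigma^m q)$ and maximizing over $m\in\{0,\ldots,N-1\}$; and \textbf{(ii)} $d_N(p,q)\leq d_\infty^{N+L}(\tau_{N+L}p,\tau_{N+L}q)+2^{-L}$ for any $L\in\N$, obtained by splitting the same series at $n=N+L-m$ and using $[0,1]$-boundedness of the coordinates for the tail. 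The second ingredient is the \emph{lifting property}: since $\tau_M(X_\Omega)=X_\Omega|_M$ for every $M$, any point of $X_\Omega|_M$ admits a lift in $X_\Omega$, with the coordinates beyond position $M$ only constrained to lie in $[0,1]$.

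For the direction $\madim(X_\Omega,\sigma,d)\leq \madim(X_\Omega)$, I will fix $L=L(\rho)$ with $2^{-L}\leq \rho/2$ and use \textbf{(ii)} to pull back a minimal cover of $\tau_{N+L}(B_{d_N}((x,y),r))$ by sets of $d_\infty^{N+L}$-diameter $\rho/2$ to a cover of $B_{d_N}((x,y),r)$ by sets of $d_N$-diameter $\rho$. By \textbf{(i)}, the projected set lies inside $B_{d_\infty^N}(\tau_N(x,y),2r)\cap X_\Omega|_N$ extended arbitrarily in the last $L$ coordinates; covering the extension by a trivial grid adds a multiplicative factor that depends only on $L$ and $\rho$. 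Dividing by $N$ and letting $N\to\infty$ yields $S(X_\Omega,r,\rho)\leq S_\infty(X_\Omega,2r,\rho/2)$, from which the dimension inequality follows since the scale ratio $r/\rho$ changes by only a constant. For the reverse inequality, I will lift $\xi\in B_{d_\infty^N}(\tau_N(x,y),r)\cap X_\Omega|_N$ to some $\tilde\xi\in X_\Omega$ with $\tau_N(\tilde\xi)=\xi$; the computation analogous to \textbf{(ii)} gives $d_{N-L}(\tilde\xi,(x,y))\leq 3r/2$ whenever $2^{-L}\leq r/2$. A cover of $B_{d_{N-L}}((x,y),3r/2)$ by sets of $d_{N-L}$-diameter $\rho/2$ projects via $\tau_{N-L}$ to $d_\infty^{N-L}$-diameter $\rho$ sets by \textbf{(i)}, and extends to a cover of $B_{d_\infty^N}(\tau_N(x,y),r)\cap X_\Omega|_N$ by $d_\infty^N$-diameter $\rho$ sets at the cost of a grid in the last $L$ coordinates. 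Letting $N\to\infty$ gives $S_\infty(X_\Omega,r,\rho)\leq S(X_\Omega,3r/2,\rho/2)$, and hence $\madim(X_\Omega)\leq \madim(X_\Omega,\sigma,d)$.

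The spectrum equality $\madim^\theta(X_\Omega,\sigma,d)=\madim^\theta(X_\Omega)$ will follow from the same two inequalities restricted along the curve $\rho=r^{1/\theta}$: the distortion constants $3/2$, $1/2$, and $2$ only perturb the ratio $r/\rho$ by a bounded multiplicative factor, which preserves the infimum defining the spectrum. The main technical obstacle I expect is the bookkeeping around the $L$ coordinates dropped or added when passing between $X_\Omega|_N$ and $X_\Omega|_{N\pm L}$; one must verify that the cost of covering such ``fiber'' extensions by a grid in $[0,1]^{2L}$ produces a factor independent of $N$, so that it vanishes after normalization by $\tfrac{1}{N}$ and passage to the limit. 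A minor technicality is to confirm that the lift of $\xi\in X_\Omega|_M$ can actually be chosen inside the closed $\sigma$-invariant set $X_\Omega$, which is immediate from the definition $X_\Omega|_M=\tau_M(X_\Omega)$.
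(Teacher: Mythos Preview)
Your proposal is correct and follows essentially the same route as the paper, which simply defers to the proof of Lemma~\ref{equivalent defn}; you have spelled out the adaptations that the paper leaves implicit. The two metric comparisons \textbf{(i)} and \textbf{(ii)} are the exact analogues of the inequalities $d_\infty^N(\pi_N y,\pi_N z)\le \dd_N(y,z)$ and $\dd_N(y,z)<d_\infty^{N+L}(\pi_{N+L}y,\pi_{N+L}z)+\rho/2$ used there (your extra factor~$2$ in \textbf{(i)} is harmless and in fact the correct constant for this metric), and your lifting observation $X_\Omega|_M=\tau_M(X_\Omega)$ together with the grid cover of the trailing $[0,1]^{2L}$ coordinates is precisely what replaces the exact product decomposition $\pi_{N+L}(\cdot)=\pi_N(\cdot)\times\pi_L(\sigma^N\cdot)$ available only in the full-shift case.
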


We omit the proof of Lemma \ref{lem:equivalent definition using projection and infinity norm} since it is essentially the same to that of Lemma \ref{equivalent defn}.

Let $(x, y) \in \left(\Omega|_N\right)^{\mathbb{N}}$ where $x = (x_m)_{m\in \mathbb{N}}$ and $y = (y_m)_{m\in \mathbb{N}}$ with 
$x_m\in A^N$, $y_m\in B^N$ and $(x_m, y_m) \in \Omega|_N$. For $r>0$, define $l_1(r),\ l_2(r)$ be the unique integers satisfying
\begin{equation*}
\begin{split}
    a^{-l_1(r)} & \leq r < a^{-l_1(r)+1},\\
    b^{-l_2(r)} & \leq r < b^{-l_2(r)+1}.
\end{split}
\end{equation*}
In particular,
\begin{equation}\label{eq: l1 l2 def}
    -\frac{\log r}{\log a} \leq l_1(r) < -\frac{\log r}{\log a}+1, \ -\frac{\log r}{\log b} \leq l_2(r) < -\frac{\log r}{\log b}+1.
\end{equation}
We denote the $N$-th \textbf{approximate square} centred at $(x,y)$ with side-length $r>0$ by $Q_{N,r}(x,y)$ and define it by
\begin{equation}  \label{eq: definition of Q_Nr}
    Q_{N, r}(x, y)  =
   \left\{\left(\sum_{m=1}^\infty \frac{x^\prime_m}{a^m}, \sum_{m=1}^\infty \frac{y^\prime_m}{b^m}\right)  \middle|\, 
    \parbox{2.5in}{\centering $(x^\prime_m, y^\prime_m) \in \Omega|_N$ for all $m\geq 1$ with \\
    $x^\prime_m = x_m\ \text{ for } 1\leq m \leq l_1(r))$ and \\ $y^\prime_m = y_m\ \text{ for } 1\leq m \leq l_2(r))$} \right\} \subset [0,1]^N \times [0,1]^N. 
\end{equation}   
Then a direct calculation shows that
\begin{equation*}   \label{eq: diameter of Q_NM}
    \diam\left(Q_{N,r}(x,y), d_{\infty}^{N}\right) \leq \max\left\{a^{-l_1(r)}, b^{-l_2(r)}\right\}\leq r.
\end{equation*}

We now define
\begin{equation*}
    \widetilde{S}(X_\Omega,r,\rho)=\lim_{N\to\infty}\frac{1}{N}\sup_{(x, y) \in \left(\Omega|_N\right)^{\mathbb{N}}} \log {\tilde{N}}_{d_{\infty}^{N}}(Q_{N,r}(x,y),\rho),
\end{equation*}
where ${\tilde{N}}_{d_{\infty}^{N}}(Q_{N,r}(x,y),\rho)$ denote the least number of elements in $\{Q_{N,\rho}(\xi,\eta): (\xi,\eta)\in \left(\Omega|_N\right)^{\N}\}$ required to cover $Q_{N,r}(x,y)$. We may replace $S_{\infty}(X_{\Omega},r,\rho)$ with $\widetilde{S}(X_\Omega,r,\rho)$ in the definition of $\madim(X_\Omega)$ and $\madim^{\theta}(X_\Omega)$.  This is because ball $B_{d_{\infty}^{N}}(\tau_N(x,y),r)$ and approximate square $Q_{N,r}(x,y)$ are comparable, similarly as in the case of non-dynamical Bedford-McMullen carpets \cite[Lemma 7.1]{Fra14}.
More precisely, it follows from Lemma \ref{lem:equivalent definition using projection and infinity norm}
\begin{equation*}
    \madim(X_\Omega)=\inf\{s>0: \underset{C>0}{\exists}\ \underset{0<\rho<r}{\forall}\  e^{\widetilde{S}(X_\Omega,r,\rho)}\leq C(r/\rho)^s\}.
\end{equation*} 
Moreover, for all $\theta\in(0,1)$,
\begin{equation*}
    \madim^{\theta}(X_\Omega)=\inf\{s>0: \underset{C>0}{\exists}\ \underset{0<r<1}{\forall}\  e^{\widetilde{S}(X_\Omega,r,r^{1/\theta})}\leq C(r/r^{1/\theta})^s\}.
\end{equation*}

\subsection{Calculation of the mean Assouad dimension: upper bound}
 Fix $N\in\N$ and let $(x, y) \in \left(\Omega|_N\right)^{\mathbb{N}}$ where $x = (x_m)_{m\in \mathbb{N}}$ and $y = (y_m)_{m\in \mathbb{N}}$ with $x_m\in A^N$, $y_m\in B^N$ and $(x_m, y_m) \in \Omega|_N$. Recall $\pi_N:\Omega|_N\to \Omega'|_N$ is the natural projection map.
 Choose $0<\rho< r$ small enough.
\begin{claim}\label{upper bound of approximate squares}
    There exists $C>0$ independent of the choice of $(x, y) \in \left(\Omega|_N\right)^{\mathbb{N}}$ such that for all $0<\rho<r$ we have
    \begin{equation*}
        {\tilde{N}}_{d_{\infty}^{N}}(Q_{N,r}(x,y),\rho) \leq C\left(\sup_{v\in\Omega'\vert_N}|\pi_N^{-1}(v)|\right)^{l_1(\rho)-l_1(r)} \left(|\Omega'\vert_N|^{l_2(\rho)-l_2(r)}\right).
    \end{equation*}
\end{claim}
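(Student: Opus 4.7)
The plan is to construct explicitly a cover of $Q_{N,r}(x,y)$ by approximate squares $Q_{N,\rho}(\xi,\eta)$ and bound the number of admissible digit extensions. By definition \eqref{eq: definition of Q_Nr}, the square $Q_{N,\rho}(\xi,\eta)$ is parametrized by its first $l_1(\rho)$ digits $\xi_m\in A^N$ and first $l_2(\rho)$ digits $\eta_m\in B^N$, subject to $(\xi_m,\eta_m)\in\Omega\vert_N$ for $m\leq l_1(\rho)$ and $\eta_m\in\Omega'\vert_N$ for $l_1(\rho)<m\leq l_2(\rho)$. For $Q_{N,\rho}(\xi,\eta)$ to meet $Q_{N,r}(x,y)$ one must have $\xi_m=x_m$ for $m\leq l_1(r)$ and $\eta_m=y_m$ for $m\leq l_2(r)$. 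Thus the covering problem reduces to counting admissible extensions over the index ranges $(l_1(r),l_1(\rho)]$ for $\xi$ and $(l_2(r),l_2(\rho)]$ for $\eta$.

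Since $a>b$, one has $l_1(s)\leq l_2(s)$ for every $s>0$, so $l_1(r)\leq l_1(\rho)\leq l_2(\rho)$ and $l_1(r)\leq l_2(r)\leq l_2(\rho)$ automatically; only the relative position of $l_1(\rho)$ and $l_2(r)$ requires a case distinction. In the case $l_1(\rho)\leq l_2(r)$, the two extension ranges are effectively disjoint: for $m\in(l_1(r),l_1(\rho)]$ the digit $\eta_m=y_m$ is already fixed and one picks $\xi_m\in\pi_N^{-1}(y_m)$ (at most $\sup_v|\pi_N^{-1}(v)|$ choices per $m$), while for $m\in(l_2(r),l_2(\rho)]$ one freely picks $\eta_m\in\Omega'\vert_N$ (at most $|\Omega'\vert_N|$ choices per $m$), directly yielding the desired product. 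In the overlapping case $l_1(\rho)>l_2(r)$, I split the range into three pieces: $(l_1(r),l_2(r)]$ where only $\xi_m$ is chosen with $\eta_m$ fixed; $(l_2(r),l_1(\rho)]$ where both $\xi_m,\eta_m$ are jointly chosen in $\Omega\vert_N$, bounded using $|\Omega\vert_N|\leq\sup_v|\pi_N^{-1}(v)|\cdot|\Omega'\vert_N|$; and $(l_1(\rho),l_2(\rho)]$ where only $\eta_m\in\Omega'\vert_N$ is chosen. A direct multiplication shows the exponents telescope: the power of $\sup_v|\pi_N^{-1}(v)|$ becomes $(l_2(r)-l_1(r))+(l_1(\rho)-l_2(r))=l_1(\rho)-l_1(r)$, and the power of $|\Omega'\vert_N|$ becomes $(l_1(\rho)-l_2(r))+(l_2(\rho)-l_1(\rho))=l_2(\rho)-l_2(r)$, matching the first case.

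The main obstacle I expect is the bookkeeping in the overlapping case — in particular, verifying that the three sub-range contributions recombine to exactly the claimed exponents, which requires the interval $(l_2(r),l_1(\rho)]$ to be handled via the submultiplicative bound on $|\Omega\vert_N|$ rather than by choosing $\xi_m$ and $\eta_m$ independently. The constant $C$ is included to absorb boundary effects when $r\geq 1$ and some $l_i(r)$ is non-positive; in that situation the range decomposition must be truncated at $m=1$, which introduces only a multiplicative error depending on $a$ and $b$ (since $|l_i(r)|$ is then bounded in terms of $r$) and independent of $N$, $\rho$, and the specific choice of $(x,y)$.
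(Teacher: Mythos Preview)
Your proposal is correct and follows essentially the same argument as the paper: the same case split on whether $l_1(\rho)\le l_2(r)$ or $l_1(\rho)>l_2(r)$, the same three-range decomposition in the overlapping case, and the same key inequality $|\Omega\vert_N|\le \sup_v|\pi_N^{-1}(v)|\cdot|\Omega'\vert_N|$ to telescope the exponents. The only cosmetic difference is that the paper treats the overlapping case first and your remark about the constant $C$ absorbing the boundary effect when some $l_i(r)\le 0$ is a point the paper leaves implicit.
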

We assume that the claim holds. 
Using inequalities \eqref{eq: l1 l2 def} and Lemma \ref{formula:topological conditional entropy of subshifts}, we obtain that
\begin{equation*}
    \begin{split}
        &\widetilde{S}(X_\Omega,r,\rho)=\lim_{N\to\infty}\frac{1}{N}\sup_{(x, y) \in \left(\Omega|_N\right)^{\mathbb{N}}} \log {\tilde{N}}_{d_{\infty}^{N}}(Q_{N,r}(x,y),\rho)\\
        &\leq (l_1(\rho)-l_1(r))\lim\limits_{N\to\infty}\frac{\sup_{v\in\Omega'\vert_N}\log|\pi_N^{-1}(v)|}{N} +(l_2(\rho)-l_2(r))\lim\limits_{N\to\infty}\frac{\log |\Omega'\vert_N|}{N}\\
        &=(l_1(\rho)-l_1(r)) \htop(\Omega\vert\Omega', \sigma) +(l_2(\rho)-l_2(r)) \htop(\Omega',\sigma)\\
        &\leq \left(\frac{\log r}{\log a}-\frac{\log \rho}{\log a}+1\right) \htop(\Omega\vert\Omega', \sigma)  +\left(\frac{\log r}{\log b}-\frac{\log \rho}{\log b}+1\right) \htop(\Omega',\sigma)\\
        &=\log\frac{r}{\rho}\left(\frac{\htop(\Omega\vert\Omega', \sigma)}{\log a}+\frac{\htop(\Omega',\sigma)}{\log b}\right)
        +\htop(\Omega\vert\Omega', \sigma)+\htop(\Omega',\sigma)
    \end{split}
\end{equation*}
and therefore
\begin{equation*}
    \madim\left(X_\Omega, \sigma, d\right)  \leq \frac{\htop(\Omega\vert\Omega', \sigma)}{\log a} + 
      \frac{\htop(\Omega', \sigma)}{\log b}.
\end{equation*}
Thus it suffices to prove Claim \ref{upper bound of approximate squares}.

\begin{proof}[Proof of Claim \ref{upper bound of approximate squares}]
We consider the following two cases.

    \textit{Case 1:} $l_1(r)\leq l_2(r) \leq l_1(\rho) \leq l_2(\rho)$.

    For each $m=l_1(r)+1,\ldots,l_2(r)$ choose $\xi_m\in A^N$ satisfying $(\xi_m,y_m)\in\Omega\vert_N$. We observe that $Q_{N,r}(x,y)$ is composed of several middle-sized (multidimensional) vertical rectangles of the form
    \begin{equation}\label{eq: approx square xi}  
   \left\{\left(\sum_{m=1}^\infty \frac{x^\prime_m}{a^m}, \sum_{m=1}^\infty \frac{y^\prime_m}{b^m}\right)\in Q_{N,r}(x,y)  \middle|\, 
    \parbox{2.5in}{\centering $x^\prime_m = \xi_m\ \text{ for }l_1(r)+1\leq m \leq l_2(r)$} \right\}, 
\end{equation}   
which are denoted by 
\begin{equation*}
   Q_{N,r}(x_1,\ldots,x_{l_1(r)},\xi_{l_1(r)+1},\ldots,\xi_{l_2(r)} ; y_1,\ldots,y_{l_2(r)}). 
\end{equation*}
These vertical rectangles share the same height with $Q_{N,r}(x,y)$, that is $b^{-l_2(r)}$, but have smaller width $a^{-l_2(r)}$.
 The total number of such middle-sized rectangles is exactly
\begin{equation*}
    \prod_{m=l_1(r)+1}^{l_2(r)} |\pi_N^{-1}(y_m)|.
\end{equation*}
Furthermore, inside each vertical rectangle $Q_{N,r}(x_1,\ldots,x_{l_1(r)},\xi_{l_1(r)+1},\ldots,\xi_{l_2(r)} ; y_1,\ldots,y_{l_2(r)})$, we choose $|\Om|_N|$ smaller rectangles. We iterate the construction for $l_1(\rho)-l_2(r)$ times  and finally obtain smaller-sized rectangles of the form
\begin{equation*}
    \left\{\left(\sum_{m=1}^\infty \frac{x^\prime_m}{a^m}, \sum_{m=1}^\infty \frac{y^\prime_m}{b^m}\right)\in Q_{N,r}(x_1,\ldots,x_{l_1(r)},\xi_{l_1(r)+1},\ldots,\xi_{l_2(r)} ; y_1,\ldots,y_{l_2(r)})   \middle|\, 
    \parbox{1.8in}{\centering $(x^\prime_m, y^\prime_m) = (\xi_m, \eta_m)\ \text{ for }$ \\  $l_2(r)+1\leq m \leq l_1(\rho)$} \right\} 
\end{equation*}
with $(\xi_m, \eta_m)\in\Omega\vert_N$ for each $l_2(r)+1\leq m \leq l_1(\rho)$, which are denoted by
\begin{equation*}
    Q_{N,r}(x_1,\ldots,x_{l_1(r)},\xi_{l_1(r)+1},\ldots,\xi_{l_1(\rho)} ; y_1,\ldots,y_{l_2(r)},\eta_{l_2(r)+1},\ldots,\eta_{l_1(\rho)}).
\end{equation*}
Up to now, we have a collection of rectangles with width $a^{-l_1(\rho)}$, approximately $\rho$, and height $b^{-l_1(\rho)}$, which is larger than $\rho$ since $l_1(\rho)\leq l_2(\rho)$.
The total number of such smaller-sized rectangles is exactly
\begin{equation*}
    |\Omega\vert_N|^{l_1(\rho)-l_2(r)}.
\end{equation*}
Finally, we may cover $Q_{N,r}(x_1,\ldots,x_{l_1(r)},\xi_{l_1(r)+1},\ldots,\xi_{l_1(\rho)} ; y_1,\ldots,y_{l_2(r)},\eta_{l_2(r)+1},\ldots,\eta_{l_1(\rho)})$ by $N$-th approximate squares of side-length $\rho$ of the form 
\begin{equation*}
    Q_{N, \rho}(\bar x, \bar y)  =
   \left\{\left(\sum_{m=1}^\infty \frac{x^\prime_m}{a^m}, \sum_{m=1}^\infty \frac{y^\prime_m}{b^m}\right)  \middle|\, 
    \parbox{2.5in}{\centering $(x^\prime_m, y^\prime_m) \in \Omega|_N$ for all $m\geq 1$ with \\
    $x^\prime_m = \bar x_m\ \text{ for } 1\leq m \leq l_1(\rho)$ and \\ $y^\prime_m = \bar y_m\ \text{ for }1\leq m \leq l_2(\rho)$} \right\}, 
\end{equation*}
where $(\bar x, \bar y)$ satisfies
\[
\begin{aligned}
\bar x_m=x_m & \text{ for } m=1,\ldots,l_1(r), \\
\bar x_m=\xi_m & \text{ for } m=l_1(r)+1,\ldots,l_1(\rho),\\
\bar y_m=y_m & \text{ for } 1\leq m\leq l_2(r),\\
\bar y_m=\eta_m & \text{ for }  l_2(r)+1\leq m\leq l_1(\rho), \\
\bar y_m\in\Omega'\vert_N & \text{ for }  l_1(\rho)+1\leq m\leq l_2(\rho).
\end{aligned}
\]
There are $|\Om'|_N|^{l_2(\rho) - l_1(\rho)}$ of such choices  as  we may cover smaller-sized rectangles simultaneously if they are located in the same row, so that we need $|\Omega'\vert_N|$ covering sets at each step.

Notice that the cover constructed above is optimal up to a constant. Putting these estimates together, we have
\begin{equation*}
    \begin{split}
        {\tilde{N}}_{d_{\infty}^{N}}(Q_{N,r}(x,y),\rho)&\asymp
        \left(\prod_{m=l_1(r)+1}^{l_2(r)} |\pi_N^{-1}(y_m)|\right) \left(|\Omega\vert_N|^{l_1(\rho)-l_2(r)}\right) \left(|\Omega'\vert_N|^{l_2(\rho)-l_1(\rho)}\right) \\
        &\leq \left(\sup_{v\in\Omega'\vert_N}|\pi_N^{-1}(v)|\right)^{l_2(r)-l_1(r)} \left(|\Omega\vert_N|^{l_1(\rho)-l_2(r)}\right) \left(|\Omega'\vert_N|^{l_2(\rho)-l_1(\rho)}\right) \\
        &\leq \left(\sup_{v\in\Omega'\vert_N}|\pi_N^{-1}(v)|\right)^{l_2(r)-l_1(r)} \left(|\Omega'\vert_N|\cdot\sup_{v\in\Omega'\vert_N}|\pi_N^{-1}(v)|\right)^{l_1(\rho)-l_2(r)} \left(|\Omega'\vert_N|^{l_2(\rho)-l_1(\rho)}\right) \\ 
        &=\left(\sup_{v\in\Omega'\vert_N}|\pi_N^{-1}(v)|\right)^{l_1(\rho)-l_1(r)} \left(|\Omega'\vert_N|^{l_2(\rho)-l_2(r)}\right).
    \end{split}
\end{equation*}

    \textit{Case 2:} $l_1(r) \leq l_1(\rho) \leq l_2(r) \leq l_2(\rho)$.

      For each $m=l_1(r)+1,\ldots,l_1(\rho)$ choose $\xi_m\in A^N$ satisfying $(\xi_m,y_m)\in\Omega\vert_N$. We observe that $Q_{N,r}(x,y)$ is composed of several rectangles of the form
    \begin{equation*}  
   \left\{\left(\sum_{m=1}^\infty \frac{x^\prime_m}{a^m}, \sum_{m=1}^\infty \frac{y^\prime_m}{b^m}\right)\in Q_{N,r}(x,y)  \middle|\, 
    \parbox{2.5in}{\centering $x^\prime_m = \xi_m\ \text{ for }l_1(r)+1\leq m \leq l_1(\rho)$} \right\}, 
\end{equation*}   
which are denoted by 
\begin{equation*}
   Q_{N,r}(x_1,\ldots,x_{l_1(r)},\xi_{l_1(r)+1},\ldots,\xi_{l_1(\rho)} ; y_1,\ldots,y_{l_2(r)}). 
\end{equation*}
 The total number of such rectangles is exactly
\begin{equation*}
    \prod_{m=l_1(r)+1}^{l_1(\rho)} |\pi_N^{-1}(y_m)|.
\end{equation*}
Moreover, we may cover $Q_{N,r}(x_1,\ldots,x_{l_1(r)},\xi_{l_1(r)+1},\ldots,\xi_{l_1(\rho)} ; y_1,\ldots,y_{l_2(r)})$ by approximate squares $Q_{N, \rho}(\bar x, \bar y)$ of side-length $\rho$,
where $(\bar x, \bar y)$ satisfies
\[
\begin{split}
\bar x_m=x_m & \text{ for } m=1,\ldots,l_1(r),\\
\bar x_m=\xi_m & \text{ for } m=l_1(r)+1,\ldots,l_1(\rho),\\
\bar y_m=y_m & \text{ for } 1\leq m\leq l_2(r),\\
\bar y_m\in\Omega'\vert_N & \text{ for } l_2(r)+1\leq m\leq l_2(\rho).
\end{split}
\]
There are $|\Omega'\vert_N|^{l_2(\rho)-l_2(r)}$ of those. Putting these estimates together, we have
\begin{equation}\label{easier case}
    \begin{split}
        {\tilde{N}}_{d_{\infty}^{N}}(Q_{N,r}(x,y),\rho)&\asymp
        \left(\prod_{m=l_1(r)+1}^{l_1(\rho)} |\pi_N^{-1}(y_m)|\right) \left(|\Omega'\vert_N|^{l_2(\rho)-l_2(r)}\right) \\
        &\leq \left(\sup_{v\in\Omega'\vert_N}|\pi_N^{-1}(v)|\right)^{l_1(\rho)-l_1(r)} \left(|\Omega'\vert_N|^{l_2(\rho)-l_2(r)}\right).
    \end{split}
\end{equation}

This completes the proof of Claim \ref{upper bound of approximate squares}.

\end{proof}

\subsection{Calculation of the mean Assouad dimension: lower bound}\label{Sec:mean Assouad dimension: lower bound}
Fix $N\in\N$.  Choose $0<\rho<r$ arbitrarily small and satisfying $(br)^{\frac{\log a}{\log b}}<\rho$. Then $l_1(r) \leq l_1(\rho) \leq l_2(r) \leq l_2(\rho)$. Moreover, let $(x, y) \in \left(\Omega|_N\right)^{\mathbb{N}}$ where $x = (x_m)_{m\in \mathbb{N}}$ and $y = (y_m)_{m\in \mathbb{N}}$ with $(x_m, y_m) \in \Omega|_N$ and $|\pi_N^{-1}(y_m)|=\sup_{v\in\Omega'\vert_N}|\pi_N^{-1}(v)|$ for $m=l_1(r)+1,\ldots,l_1(\rho)$. Then the unique inequality in \eqref{easier case} is replaced by equality and therefore
\begin{equation*}
    \begin{split}
        &\lim_{N\to\infty}\frac{1}{N} \log {\tilde{N}}_{d_{\infty}^{N}}(Q_{N,r}(x,y),\rho)\\
        &\asymp (l_1(\rho)-l_1(r))\lim\limits_{N\to\infty}\frac{\sup_{v\in\Omega'\vert_N}\log|\pi_N^{-1}(v)|}{N} +(l_2(\rho)-l_2(r))\lim\limits_{N\to\infty}\frac{\log |\Omega'\vert_N|}{N}\\
        &\geq \left(-\frac{\log \rho}{\log a}+\frac{\log r}{\log a}-1\right)\htop(\Omega\vert\Omega', \sigma)+
        \left(-\frac{\log \rho}{\log b}+\frac{\log r}{\log b}-1\right)\htop(\Omega',\sigma)\\
        &=\log\frac{r}{\rho}\left(\frac{\htop(\Omega\vert\Omega', \sigma)}{\log a}+\frac{\htop(\Omega',\sigma)}{\log b}\right)
        -\htop(\Omega\vert\Omega', \sigma)-\htop(\Omega',\sigma).
    \end{split}
\end{equation*}
Since we can choose a sequence of points $(x, y)\in (\Omega|_N)^{\N}$ and scales $0<\rho<r$ satisfying the above inequality, it follows that 
\begin{equation*}
    \madim\left(X_\Omega, \sigma, d\right)  \geq \frac{\htop(\Omega\vert\Omega', \sigma)}{\log a} + 
      \frac{\htop(\Omega', \sigma)}{\log b}.
\end{equation*}

\subsection{Calculation of the mean Assouad spectrum}
The following lemma is a prior indication that there will be a phase transition in the spectrum occurring at $\theta=\log b/\log a$.
\begin{lem}
    Let $r\in(0,1)$ and $\theta\in(0,1)$. 
    \begin{enumerate}[$(1)$]
        \item If $\theta\leq\log b/\log a$, then $l_1(r)\leq l_2(r)\leq l_1(r^{1/\theta})\leq l_2(r^{1/\theta})$.
        \item If $\theta\geq\log b/\log a$, then $l_1(r)\leq l_1(r^{1/\theta})\leq l_2(r)\leq l_2(r^{1/\theta})$.
    \end{enumerate}
\end{lem}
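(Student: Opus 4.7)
The plan is to reduce both statements to the monotonicity of the ceiling function applied to linear expressions in $-\log r$. From the definition \eqref{eq: l1 l2 def}, the integers $l_1(r)$ and $l_2(r)$ are uniquely characterised as the smallest integers dominating $-\log r/\log a$ and $-\log r/\log b$ respectively, so that
\[ l_1(r) = \left\lceil \frac{-\log r}{\log a} \right\rceil, \qquad l_2(r) = \left\lceil \frac{-\log r}{\log b} \right\rceil, \]
and analogously for $r^{1/\theta}$ with $-\log r$ replaced by $-\log r/\theta$. Since $r \in (0,1)$, we have $-\log r > 0$, and since $a > b \geq 2$ we have $\log a > \log b > 0$.

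First I would dispose of the inequalities that hold regardless of the case. The bound $l_1(r) \leq l_1(r^{1/\theta})$ and $l_2(r) \leq l_2(r^{1/\theta})$ follow from $\theta \in (0,1)$, which gives $1/\theta > 1$ and hence $-\log r \leq -\log r/\theta$; monotonicity of $\lceil\,\cdot\,\rceil$ then finishes these. Similarly $l_1(r) \leq l_2(r)$ and $l_1(r^{1/\theta}) \leq l_2(r^{1/\theta})$ follow from $1/\log a \leq 1/\log b$.

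The only inequality whose direction depends on $\theta$ is the middle one, comparing $l_2(r)$ with $l_1(r^{1/\theta})$. For (1), the assumption $\theta \leq \log b/\log a$ rewrites as $\theta \log a \leq \log b$, so
\[ \frac{-\log r}{\log b} \leq \frac{-\log r}{\theta \log a}, \]
and applying $\lceil \cdot \rceil$ gives $l_2(r) \leq l_1(r^{1/\theta})$, completing the chain. For (2) the assumption $\theta \geq \log b/\log a$ reverses this inequality, yielding $l_1(r^{1/\theta}) \leq l_2(r)$ by the same argument.

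There is no real obstacle: the whole lemma is a transparent consequence of the monotonicity of the ceiling function, and the phase-transition threshold $\theta = \log b/\log a$ arises naturally from equating the coefficients $1/\log b$ and $1/(\theta \log a)$.
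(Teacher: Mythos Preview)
Your proof is correct and is exactly the direct verification the paper has in mind: the paper's own proof consists of the single sentence ``It follows directly from the definition of $l_1(\cdot)$ and $l_2(\cdot)$,'' and your argument simply unpacks that sentence via the ceiling-function identification and monotonicity.
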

\begin{proof}
    It follows directly from the definition of $l_1(\cdot)$ and $l_2(\cdot)$.
\end{proof}

\begin{prop}\label{mean Assouad spectrum:part i}
    Let $\theta\in(0,\log b/\log a]$. Then
    \begin{equation*}
    \begin{split}
        &\madim^{\theta}\left(X_\Omega, \sigma, d\right)  \\
    &= \frac{\frac{\htop\left(\Omega, \sigma\right)}{\log a} + 
      \left(\frac{1}{\log b} - \frac{1}{\log a}\right) \htop\left(\Omega^\prime, \sigma\right)-\theta\left(\left(\frac{1}{\log a}-\frac{1}{\log b}\right)\htop(\Omega\vert\Omega', \sigma)+\frac{\htop(\Omega,\sigma)}{\log b}\right)}{1-\theta}.
    \end{split}
    \end{equation*}
\end{prop}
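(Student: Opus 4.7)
The plan is to work with the equivalent formulation of $\madim^\theta(X_\Omega, \sigma, d)$ via $\widetilde{S}(X_\Omega, r, r^{1/\theta})$ from Lemma \ref{lem:equivalent definition using projection and infinity norm} and the subsequent remark, and then adapt the two-sided analysis of $\tilde{N}_{d_\infty^N}(Q_{N,r}(x,y), r^{1/\theta})$ used in computing $\madim(X_\Omega, \sigma, d)$. For $\theta \in (0, \log b/\log a]$, the lemma immediately preceding this proposition gives $l_1(r) \leq l_2(r) \leq l_1(r^{1/\theta}) \leq l_2(r^{1/\theta})$, so Case 1 of Claim \ref{upper bound of approximate squares} applies verbatim with $\rho = r^{1/\theta}$.

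For the upper bound, Claim \ref{upper bound of approximate squares} yields, uniformly in $(x, y) \in (\Omega|_N)^\N$,
\[ \tilde{N}_{d_\infty^N}(Q_{N,r}(x,y), r^{1/\theta}) \leq C \Bigl(\sup_{v \in \Omega'|_N} |\pi_N^{-1}(v)|\Bigr)^{l_2(r) - l_1(r)} |\Omega|_N|^{l_1(r^{1/\theta}) - l_2(r)} |\Omega'|_N|^{l_2(r^{1/\theta}) - l_1(r^{1/\theta})}. \]
Taking $(1/N)\log$ and passing to $N\to\infty$, Lemma \ref{formula:topological conditional entropy of subshifts} together with the standard identities $\htop(\Omega, \sigma) = \lim_N \log|\Omega|_N|/N$ and $\htop(\Omega', \sigma) = \lim_N \log|\Omega'|_N|/N$ gives
\[ \widetilde{S}(X_\Omega, r, r^{1/\theta}) \leq (l_2(r) - l_1(r)) \htop(\Omega|\Omega', \sigma) + (l_1(r^{1/\theta}) - l_2(r)) \htop(\Omega, \sigma) + (l_2(r^{1/\theta}) - l_1(r^{1/\theta})) \htop(\Omega', \sigma). \]

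For the matching lower bound, for each $N$ fix $v^* \in \Omega'|_N$ maximizing $|\pi_N^{-1}(v)|$, and construct $(x, y) \in (\Omega|_N)^\N$ with $y_m = v^*$ and $x_m \in \pi_N^{-1}(v^*)$ (arbitrary) for $l_1(r) < m \leq l_2(r)$, the remaining $(x_m, y_m) \in \Omega|_N$ being arbitrary. The approximate squares $Q_{N, r^{1/\theta}}(\xi, \eta) \subset Q_{N, r}(x, y)$ are parametrized by extensions of the code of $(x, y)$: $|\pi_N^{-1}(v^*)|$ choices for $\xi_m$ with $l_1(r) < m \leq l_2(r)$, then $|\Omega|_N|$ choices for $(\xi_m, \eta_m)$ with $l_2(r) < m \leq l_1(r^{1/\theta})$, and finally $|\Omega'|_N|$ choices for $\eta_m$ with $l_1(r^{1/\theta}) < m \leq l_2(r^{1/\theta})$. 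Since approximate squares with distinct codes are pairwise interior-disjoint (different digit choices correspond to disjoint cylinders in the symbolic description of the infinite-dimensional carpet), this count provides the matching lower bound on $\tilde{N}_{d_\infty^N}$, and hence on $\widetilde{S}(X_\Omega, r, r^{1/\theta})$.

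To finish, substitute the asymptotics $l_1(\zeta) = -\log \zeta/\log a + O(1)$ and $l_2(\zeta) = -\log \zeta/\log b + O(1)$ (for $\zeta \in \{r, r^{1/\theta}\}$) into both bounds, and normalize by $\log(r/r^{1/\theta}) = -(\log r)(1 - \theta)/\theta$. Collecting the coefficients of $\htop(\Omega, \sigma)$, $\htop(\Omega', \sigma)$ and $\htop(\Omega|\Omega', \sigma)$ shows that the resulting numerator equals $\mmdim(X_\Omega, \sigma, d) - \theta\bigl((1/\log a - 1/\log b) \htop(\Omega|\Omega', \sigma) + \htop(\Omega, \sigma)/\log b\bigr)$, giving the claimed formula. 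The main obstacle is the lower bound construction: verifying that the code-indexed approximate squares indeed give interior-disjoint subsets of $Q_{N,r}(x,y)$, which is a combinatorial fact about the symbolic description of the carpet that replaces the simpler Case 2 analysis used in the lower bound for $\madim$; the algebraic simplification, while bookkeeping-intensive, is mechanical once the $l_i$ asymptotics are inserted.
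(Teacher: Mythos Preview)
Your proposal is correct and follows essentially the same approach as the paper. Both proofs use the Case~1 analysis from the proof of Claim~\ref{upper bound of approximate squares} (applied with $\rho=r^{1/\theta}$, which is legitimate precisely because $\theta\le\log b/\log a$ forces $l_1(r)\le l_2(r)\le l_1(r^{1/\theta})\le l_2(r^{1/\theta})$), obtain the three-term bound involving $\sup_v|\pi_N^{-1}(v)|$, $|\Omega|_N|$ and $|\Omega'|_N|$, and for the lower bound pick $(x,y)$ with $y_m$ maximizing $|\pi_N^{-1}(\cdot)|$ on the relevant range; your phrasing of the lower bound via counting interior-disjoint sub-approximate-squares is equivalent to the paper's assertion that the cover is optimal and that inequality $(\star)$ becomes an equality for this choice of $(x,y)$.
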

\begin{proof}
    Fix $N\in\N$ and let $(x, y) \in \left(\Omega|_N\right)^{\mathbb{N}}$ where $x = (x_m)_{m\in \mathbb{N}}$ and $y = (y_m)_{m\in \mathbb{N}}$ with $(x_m, y_m) \in \Omega|_N$. Fix $r\in(0,1)$. Let $Q_{N, r}(x, y)$ be as defined in \eqref{eq: definition of Q_Nr}. Then $Q_{N, r}(x, y)$ is composed of middle-sized rectangles $Q_{N,r}(x_1,\ldots,x_{l_1(r)},\xi_{l_1(r)+1},\ldots,\xi_{l_2(r)} ; y_1,\ldots,y_{l_2(r)})$ as in \eqref{eq: approx square xi}, with $\xi_m\in\pi_N^{-1}(y_m)$ for $m=l_1(r)+1,\ldots,l_2(r)$. The total number of such middle-sized rectangles is exactly
    \begin{equation*}
        \prod_{m=l_1(r)+1}^{l_2(r)}|\pi_N^{-1}(y_m)|.
    \end{equation*}
    Furthermore, each $Q_{N,r}(x_1,\ldots,x_{l_1(r)},\xi_{l_1(r)+1},\ldots,\xi_{l_2(r)} ; y_1,\ldots,y_{l_2(r)})$ is composed of smaller-sized rectangles
\begin{equation*}
    Q_{N,r}(x_1,\ldots,x_{l_1(r)},\xi_{l_1(r)+1},\ldots,\xi_{l_1(r^{1/\theta})} ; y_1,\ldots,y_{l_2(r)},\eta_{l_2(r)+1},\ldots,\eta_{l_1(r^{1/\theta})}).
\end{equation*}
 with $(\xi_m, \eta_m)\in\Omega\vert_N$ for each $l_2(r)+1\leq m \leq l_1(r^{1/\theta})$. The total number of such smaller-sized rectangles is exactly
\begin{equation*}
    |\Omega\vert_N|^{l_1(r^{1/\theta})-l_2(r)}.
\end{equation*}
Finally, we may cover $Q_{N,r}(x_1,\ldots,x_{l_1(r)},\xi_{l_1(r)+1},\ldots,\xi_{l_1(r^{1/\theta})} ; y_1,\ldots,y_{l_2(r)},\eta_{l_2(r)+1},\ldots,\eta_{l_1(r^{1/\theta})})$ by $N$-th approximate squares $Q_{N, r^{1/\theta}}(\bar x, \bar y)$,
where $(\bar x, \bar y)$ satisfies
\[
\begin{split}
\bar x_m=x_m & \text{ for } m=1,\ldots,l_1(r),\\
\bar x_m=\xi_m & \text{ for } m=l_1(r)+1,\ldots,l_1(r^{1/\theta}),\\
\bar y_m=y_m & \text{ for } 1\leq m\leq l_2(r),\\
\bar y_m=\eta_m & \text{ for } l_2(r)+1\leq m\leq l_1(r^{1/\theta}),\\
\bar y_m\in\Omega'\vert_N & \text{ for } l_1(r^{1/\theta})+1\leq m\leq l_2(r^{1/\theta}).
\end{split}
\]
There are $|\Omega'_N|^{l_2(r^{1 / \theta}) - l_1(r^{1 / \theta})}$ of such approximate squares. Putting these estimates together, we have
\begin{equation*}
    \begin{split}
        &{\tilde{N}}_{d_{\infty}^{N}}(Q_{N,r}(x,y),r^{1/\theta})\\
        &\asymp
        \left(\prod_{m=l_1(r)+1}^{l_2(r)} |\pi_N^{-1}(y_m)|\right) \left(|\Omega\vert_N|^{l_1(r^{1/\theta})-l_2(r)}\right) \left(|\Omega'\vert_N|^{l_2(r^{1/\theta})-l_1(r^{1/\theta})}\right) \\
        &\overset{(\star)}{\leq} \left(\sup_{v\in\Omega'\vert_N}|\pi_N^{-1}(v)|\right)^{l_2(r)-l_1(r)} \left(|\Omega\vert_N|^{l_1(r^{1/\theta})-l_2(r)}\right) \left(|\Omega'\vert_N|^{l_2(r^{1/\theta})-l_1(r^{1/\theta})}\right) \\
        &= \left(\sup_{v\in\Omega'\vert_N}|\pi_N^{-1}(v)|\right)^{\lfloor\log r/\log a\rfloor-\lfloor\log r/\log b\rfloor} \left(|\Omega\vert_N|^{\lfloor\log r/\log b\rfloor-\lfloor\log r/(\theta\log a)\rfloor}\right)\\
        &\cdot\left(|\Omega'\vert_N|^{\lfloor\log r/(\theta\log a)\rfloor-\lfloor\log r/\theta\log b\rfloor}\right).
    \end{split}
\end{equation*}
Taking logarithms, dividing by $N$ and taking $N\to\infty$, we obtain that
\begin{equation*}
    \begin{split}
        &\widetilde{S}(X_\Omega,r,r^{1/\theta})=\lim_{N\to\infty}\frac{1}{N}\sup_{(x, y) \in \left(\Omega|_N\right)^{\mathbb{N}}} \log {\tilde{N}}_{d_{\infty}^{N}}(Q_{N,r}(x,y),r^{1/\theta})\\
        &\leq \left(\frac{\log r}{\log a}-\frac{\log r}{\log b}+1\right) \htop(\Omega\vert\Omega', \sigma) +\left(\frac{\log r}{\log b}-\frac{\log r}{\theta\log a}+1\right) \htop(\Omega,\sigma) \\
        &+\left(\frac{\log r}{\theta\log a}-\frac{\log r}{\theta\log b}+1\right) \htop(\Omega',\sigma)\\
        &=\left(\left(\frac{1}{\log a}-\frac{1}{\log b}\right)\htop(\Omega\vert\Omega', \sigma)+\frac{\htop(\Omega,\sigma)}{\log b}-\frac{\htop(\Omega,\sigma)}{\theta\log a}+\left(\frac{1}{\theta\log a}-\frac{1}{\theta\log b}\right)\htop(\Omega', \sigma)\right) \log r\\
        &+\htop(\Omega\vert\Omega', \sigma)+\htop(\Omega,\sigma)+\htop(\Omega',\sigma)
    \end{split}
\end{equation*}
and therefore
\begin{equation*}
    \begin{split}
        &\madim^{\theta}\left(X_\Omega, \sigma, d\right) \\
        &\leq
        \frac{\left(\frac{1}{\log a}-\frac{1}{\log b}\right)\htop(\Omega\vert\Omega', \sigma)+\frac{\htop(\Omega,\sigma)}{\log b}-\frac{\htop(\Omega,\sigma)}{\theta\log a}+\left(\frac{1}{\theta\log a}-\frac{1}{\theta\log b}\right)\htop(\Omega', \sigma)}{1-1/\theta}\\
        &=\frac{\frac{\htop\left(\Omega, \sigma\right)}{\log a} + 
      \left(\frac{1}{\log b} - \frac{1}{\log a}\right) \htop\left(\Omega^\prime, \sigma\right)-\theta\left(\left(\frac{1}{\log a}-\frac{1}{\log b}\right)\htop(\Omega\vert\Omega', \sigma)+\frac{\htop(\Omega,\sigma)}{\log b}\right)}{1-\theta}.
    \end{split}
\end{equation*}
This yields the desired upper bound. Observe that it we choose $(x, y) \in \left(\Omega|_N\right)^{\mathbb{N}}$ with $|\pi_N^{-1}(y_m)|=\sup_{v\in\Omega'\vert_N}|\pi_N^{-1}(v)|$ for $m=l_1(r)+1,\ldots,l_2(r)$, then the inequality $(\star)$ in the above argument is replaced by equality and our covering estimates were optimal up to multiplicative constants. Therefore, in this way we will obtain the required lower bound for the mean Assouad spectrum of $(X_{\Omega},\sigma,d)$.
\end{proof}

\begin{prop}
     Let $\theta\in(\log b/\log a,1)$. Then
    \begin{equation*}
    \begin{split}
        \madim^{\theta}\left(X_\Omega, \sigma, d\right) = \frac{\htop(\Omega\vert\Omega', \sigma)}{\log a} + 
      \frac{\htop(\Omega', \sigma)}{\log b}.
    \end{split}
    \end{equation*}
\end{prop}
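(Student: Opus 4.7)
The right-hand side of the claimed identity coincides with $\madim(X_\Omega, \sigma, d)$ by Theorem \ref{theorem: mean Assouad dimension of carpet systems}, so the proof reduces to showing $\madim^\theta(X_\Omega, \sigma, d) = \madim(X_\Omega, \sigma, d)$ whenever $\theta \in (\log b/\log a, 1)$. The upper bound $\madim^\theta \leq \madim$ holds trivially for every $\theta \in (0,1)$, directly from definitions \eqref{eq:mean Assouad dim} and \eqref{eq: madim spectrum def}.

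For the lower bound, the plan is to saturate the Case 2 estimate from the proof of Claim \ref{upper bound of approximate squares}. Since $\theta > \log b/\log a$, the phase-transition lemma preceding Proposition \ref{mean Assouad spectrum:part i} guarantees the ordering $l_1(r) \leq l_1(r^{1/\theta}) \leq l_2(r) \leq l_2(r^{1/\theta})$, so taking $\rho = r^{1/\theta}$ places us precisely in Case 2. The only non-sharp step in the chain leading to \eqref{easier case} is the bound $\prod_{m=l_1(r)+1}^{l_1(r^{1/\theta})} |\pi_N^{-1}(y_m)| \leq \bigl(\sup_{v \in \Omega'|_N}|\pi_N^{-1}(v)|\bigr)^{l_1(r^{1/\theta}) - l_1(r)}$. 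This becomes an equality when $(x,y) \in (\Omega|_N)^{\mathbb{N}}$ is chosen so that $|\pi_N^{-1}(y_m)|$ attains the supremum for every $m = l_1(r)+1, \ldots, l_1(r^{1/\theta})$, which is admissible since coordinates of sequences in $(\Omega|_N)^{\mathbb{N}}$ can be picked independently.

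With this optimal choice, taking logarithms, dividing by $N$, letting $N \to \infty$, and invoking Lemma \ref{formula:topological conditional entropy of subshifts} produces
\[
\widetilde{S}(X_\Omega, r, r^{1/\theta}) \geq (l_1(r^{1/\theta}) - l_1(r))\,\htop(\Omega|\Omega', \sigma) + (l_2(r^{1/\theta}) - l_2(r))\,\htop(\Omega', \sigma) - O(1).
\]
The inequalities in \eqref{eq: l1 l2 def} give $l_1(r^{1/\theta}) - l_1(r) \geq \log(r/r^{1/\theta})/\log a - 1$ and $l_2(r^{1/\theta}) - l_2(r) \geq \log(r/r^{1/\theta})/\log b - 1$, so
\[
\widetilde{S}(X_\Omega, r, r^{1/\theta}) \geq \madim(X_\Omega, \sigma, d)\cdot\log(r/r^{1/\theta}) - O(1).
\]
Since $\log(r/r^{1/\theta}) = (1/\theta-1)(-\log r) \to \infty$ as $r \to 0$, this forces $\madim^\theta(X_\Omega, \sigma, d) \geq \madim(X_\Omega, \sigma, d)$, completing the proof when combined with the trivial upper bound.

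The main (minor) point is the appeal to the phase-transition lemma to place the scales in Case 2; once that is done, the argument is a direct dual of the lower-bound proof for $\madim(X_\Omega, \sigma, d)$ carried out in the previous subsection, with the role of arbitrary $(\rho, r)$ replaced by the single-parameter family $(\rho, r) = (r^{1/\theta}, r)$.
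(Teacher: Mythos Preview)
Your proof is correct and follows essentially the same route as the paper's. The only cosmetic difference is in the upper bound: the paper redoes the Case~2 covering argument explicitly for $\rho=r^{1/\theta}$, whereas you shortcut this by invoking the already-established formula for $\madim(X_\Omega,\sigma,d)$ (Sections~5.3--5.4) together with the general inequality $\madim^\theta\leq\madim$---a legitimate and slightly more economical move, since that formula is proved before this proposition and the reference is not circular. The lower bound is identical to the paper's: both choose $(x,y)$ so that each $|\pi_N^{-1}(y_m)|$ attains the supremum, turning the inequality in \eqref{easier case} into an equality.
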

\begin{proof}
    
    We proceed with the covering argument in Proposition \ref{mean Assouad spectrum:part i}, whereas this time we will obtain a collection of rectangles with height approximately $r^{1/\theta}$ after $l_1(r^{1/\theta})$ steps, which is before the bases become smaller than $r$, since in this situation $l_1(r^{1/\theta})\leq l_2(r)$. For this reason, the middle term concerning power of $|\Omega_N|$ is not required anymore. More precisely, for $(x, y) \in \left(\Omega|_N\right)^{\mathbb{N}}$, let $Q_{N, r}(x, y)$ be defined as in \eqref{eq: definition of Q_Nr}. Then $Q_{N, r}(x, y)$ is composed of middle-sized rectangles of the form
    \begin{equation*}
   \left\{\left(\sum_{m=1}^\infty \frac{x^\prime_m}{a^m}, \sum_{m=1}^\infty \frac{y^\prime_m}{b^m}\right)\in Q_{N,r}(x,y)  \middle|\, 
    \parbox{2.5in}{\centering $x^\prime_m = \xi_m\ \text{ for }l_1(r)+1\leq m \leq l_1(r^{1/\theta})$} \right\}, 
\end{equation*} 
    which are denoted by 
\begin{equation*}
   Q_{N,r}(x_1,\ldots,x_{l_1(r)},\xi_{l_1(r)+1},\ldots,\xi_{l_1(r^{1/\theta})} ; y_1,\ldots,y_{l_2(r)}). 
\end{equation*}
 The total number of such middle-sized rectangles (each with height approximately $r^{1/\theta}$) is exactly
\begin{equation*}
    \prod_{m=l_1(r)+1}^{l_1(r^{1/\theta})} |\pi_N^{-1}(y_m)|.
\end{equation*}
    Furthermore, we may cover each $Q_{N,r}(x_1,\ldots,x_{l_1(r)},\xi_{l_1(r)+1},\ldots,\xi_{l_1(r^{1/\theta})} ;y_1,\ldots,y_{l_2(r)})$ by $N$-th approximate squares $Q_{N, r^{1/\theta}}(\bar x, \bar y)$ (each with height and base length $r^{1/\theta}$),
where $(\bar x, \bar y)$ satisfies
\[
\begin{aligned}
\bar x_m=x_m & \text{ for } m=1,\ldots,l_1(r), \\
\bar x_m=\xi_m & \text{ for } m=l_1(r)+1,\ldots,l_1(r^{1/\theta}),\\
\bar y_m=y_m & \text{ for } 1\leq m\leq l_2(r),\\
\bar y_m\in\Omega'\vert_N & \text{ for }  l_2(r)+1\leq m\leq l_2(r^{1/\theta}).
\end{aligned}
\]
There are $|\Om'|_N|^{l_2(r^{1/\theta}) - l_2(r)}$ of such choices. Putting these estimates together, we have
    \begin{equation*}
    \begin{split}
        &{\tilde{N}}_{d_{\infty}^{N}}(Q_{N,r}(x,y),r^{1/\theta})\\
        &\asymp
        \left(\prod_{m=l_1(r)+1}^{l_1(r^{1/\theta})} |\pi_N^{-1}(y_m)|\right) \left(|\Omega'\vert_N|^{l_2(r^{1/\theta})-l_2(r)}\right) \\
        &\overset{(\star\star)}{\leq} \left(\sup_{v\in\Omega'\vert_N}|\pi_N^{-1}(v)|\right)^{l_1(r^{1/\theta})-l_1(r)} \left(|\Omega'\vert_N|^{l_2(r^{1/\theta})-l_2(r)}\right) \\
        &= \left(\sup_{v\in\Omega'\vert_N}|\pi_N^{-1}(v)|\right)^{\lfloor\log r/\log a\rfloor-\lfloor\log r/\theta\log a\rfloor} \left(|\Omega'\vert_N|^{\lfloor\log r/\log b\rfloor-\lfloor\log r/\theta\log b\rfloor}\right).
    \end{split}
\end{equation*}
Taking logarithms, dividing $N$ and taking $N\to\infty$, we obtain that
\begin{equation*}
    \begin{split}
        &\widetilde{S}(X_\Omega,r,r^{1/\theta})=\lim_{N\to\infty}\frac{1}{N}\sup_{(x, y) \in \left(\Omega|_N\right)^{\mathbb{N}}} \log {\tilde{N}}_{d_{\infty}^{N}}(Q_{N,r}(x,y),r^{1/\theta})\\
        &\leq \left(\frac{\log r}{\log a}-\frac{\log r}{\theta\log a}+1\right) \htop(\Omega\vert\Omega', \sigma) +\left(\frac{\log r}{\log b}-\frac{\log r}{\theta\log b}+1\right) \htop(\Omega',\sigma)\\
        &=\left(\frac{\htop(\Omega\vert\Omega', \sigma)}{\log a} + 
      \frac{\htop(\Omega', \sigma)}{\log b}\right) \log r^{1-1/\theta} +\htop(\Omega\vert\Omega', \sigma)+\htop(\Omega', \sigma)
    \end{split}
\end{equation*}
and therefore
\begin{equation*}
    \begin{split}
        \madim^{\theta}\left(X_\Omega, \sigma, d\right) \leq
        \frac{\htop(\Omega\vert\Omega', \sigma)}{\log a} + \frac{\htop(\Omega', \sigma)}{\log b}.
    \end{split}
\end{equation*}
This yields the desired upper bound. Observe that it we choose $(x, y) \in \left(\Omega|_N\right)^{\mathbb{N}}$ with $|\pi_N^{-1}(y_m)|=\sup_{v\in\Omega'\vert_N}|\pi_N^{-1}(v)|$ for $m=l_1(r)+1,\ldots,l_1(r^{1/\theta})$, then the inequality $(\star\star)$ in the above argument is replaced by equality and our covering estimates were optimal up to multiplicative constants. Therefore, in this way we will obtain the required lower bound for the mean Assouad spectrum of $(X_{\Omega},\sigma,d)$.
\end{proof}

\subsection{Relations with finite-dimensional carpets}\label{sec: proj adim BM}

While the Bedford-McMullen carpet systems are infinite-dimensional objects, they are related to the finite-dimensional self-affine carpets. The connection is easiest when $\Omega$ is a full shift.

\begin{example}
    Fix a non-empty subset $R\subset A\times B$. We define  a Bedford--McMullen carpet by
    \[  C_R = \left\{\left(\sum_{m=1}^\infty \frac{x_m}{a^m}, \sum_{m=1}^\infty \frac{y_m}{b^m}\right) 
      \in [0,1]^2 \middle|\, 
     (x_m, y_m)\in R \text{ for all $m \geq 1$}\right\}. \]
     Let $\widehat{d}$ be the metric on the plane defined by
     \[\widehat{d}\left((x, y), (x^\prime, y^\prime)\right)  = \max\left\{|x-x^\prime|, |y-y^\prime|\right\}. \]
     For each $v\in B$ we denote by $t(v)$ the number of $u\in A$ satisfying $(u,v)\in R$. Denote by $R'$ the collection of $v\in B$ for which there exists $u\in A$ with $(u,v)\in R$. The Assouad dimension of $C_R$ is given by
     \begin{equation}\label{Assouad dim of carpet}
         \adim(C_R,\widehat{d})=\frac{\log \max_{v\in B}t(v)}{\log a}+\frac{\log |R'|}{\log b},
     \end{equation}
     see e.g. \cite[Theorem 1.1]{Mac11}.
     We now explain how \eqref{eq: mean Assouad dimension of carpet systems} is related to \eqref{Assouad dim of carpet}. Let $\Omega=R^{\N}$. First, observe that $\Omega'=\pi(\Omega)=\pi(R^{\N})=(R')^{\N}$ and hence $\htop(\Omega', \sigma)=\lim_{N\to\infty}\frac{\log|\Omega'|_N|}{N}=\lim_{N\to\infty}\frac{\log|(R')^N|}{N}=\log |R'|$. Moreover, by Lemma \ref{formula:topological conditional entropy of subshifts},
     \begin{equation*}
     \begin{split}
         \htop(\Omega\vert\Omega', \sigma)&=\lim_{N\to\infty}\frac{\sup_{v\in \Omega'|_{N}}\log|\pi_{N}^{-1}(v)|}{N}\\
         &=\lim_{N\to\infty}\frac{\sup_{v=(v_1,\ldots,v_N)\in (R')^{N}}\log(t(v_1)\cdots t(v_N))}{N}\\
         &=\log\max_{v\in B}t(v).
     \end{split}
    \end{equation*}
    We conclude $\madim\left(X_\Omega, \sigma, d\right)  =\adim(C_R,\widehat{d})$. Indeed, if $\Omega=R^{\N}$ is a full shift, we have $X_{\Omega}=(C_R)^{\N}$, i.e., the carpet system is the full shift on the planar Bedford--McMullen carpet (as observed directly from the construction of $X_{\Omega}$), so $\madim\left(X_\Omega, \sigma, d\right)  =\adim(C_R,\widehat{d})$ by Proposition \ref{madim=adim}. 
\end{example}

For a general subshift $\Om \subset (A \times B)^\N$, the connection is less direct. For given $N\in\N$, it is easy to see that the set $X_{\Omega}|_N$ defined in \eqref{eq: X Omega N} is a self-affine subset of $[0,1]^N \times [0,1]^N$. We have the following:

\begin{prop}\label{prop: BM prodim}
    Let $(X_{\Omega},\sigma)$ be a carpet system equipped with distance \eqref{eq: metric on infinite dimensional carpets}. Then we have $\madim\left(X_\Omega, \sigma, d\right)=\lim \limits_{N \to \infty} \frac{1}{N} \adim (X_{\Omega}|_N)$.
\end{prop}
\begin{proof}
    Fix $N\in\N$. Combining \eqref{eq: l1 l2 def} and Claim \ref{upper bound of approximate squares}, there exists $C>0$ such that for all $(x,y)\in(\Omega|_N)^{\N}$ and $0<\rho<r$:
    \begin{equation*}
        \begin{split}
            {\tilde{N}}_{d_{\infty}^{N}}(Q_{N,r}(x,y),\rho) &\leq C\left(\sup_{v\in\Omega'\vert_N}|\pi_N^{-1}(v)|\right)^{l_1(\rho)-l_1(r)} \left(|\Omega'\vert_N|^{l_2(\rho)-l_2(r)}\right)\\
            &\leq C\left(\sup_{v\in\Omega'\vert_N}|\pi_N^{-1}(v)|\right)^{\log_a{\frac{r}{\rho}}+1} \left(|\Omega'\vert_N|^{\log_b{\frac{r}{\rho}}+1}\right)\\
            &=C_N\left(\frac{r}{\rho}\right)^{\log_a{\sup_{v\in\Omega'\vert_N}|\pi_N^{-1}(v)|}+\log_b{|\Omega'\vert_N|}},
        \end{split}
    \end{equation*}
    where $C_N=C\sup_{v\in\Omega'\vert_N}|\pi_N^{-1}(v)|\cdot|\Omega'\vert_N|$. We conclude $\adim (X_{\Omega}|_N)\leq \log_a{\sup_{v\in\Omega'\vert_N}|\pi_N^{-1}(v)|}+\log_b{|\Omega'\vert_N|}$.

    Similar to Subsection \ref{Sec:mean Assouad dimension: lower bound}, we may find a sequence of points $(x, y)\in (\Omega|_N)^{\N}$ and scales $0<\rho<r$ satisfying
    \begin{equation*}
    \begin{split}
        {\tilde{N}}_{d_{\infty}^{N}}(Q_{N,r}(x,y),\rho)&\asymp \left(\sup_{v\in\Omega'\vert_N}|\pi_N^{-1}(v)|\right)^{l_1(\rho)-l_1(r)} \left(|\Omega'\vert_N|^{l_2(\rho)-l_2(r)}\right)\\
            &\geq C'_N\left(\frac{r}{\rho}\right)^{\log_a{\sup_{v\in\Omega'\vert_N}|\pi_N^{-1}(v)|}+\log_b{|\Omega'\vert_N|}},
    \end{split}
\end{equation*}
where $C'_N=(\sup_{v\in\Omega'\vert_N}|\pi_N^{-1}(v)|\cdot|\Omega'\vert_N|)^{-1}$. We conclude $\adim (X_{\Omega}|_N)= \log_a{\sup_{v\in\Omega'\vert_N}|\pi_N^{-1}(v)|}+\log_b{|\Omega'\vert_N|}$ and hence by Theorem \ref{theorem: mean Assouad dimension of carpet systems}
\begin{equation*}
    \begin{split}
        \lim \limits_{N \to \infty} \frac{\adim (X_{\Omega}|_N)}{N} & =\lim \limits_{N \to \infty} \frac{\log_a{\sup_{v\in\Omega'\vert_N}|\pi_N^{-1}(v)|}}{N} + \lim \limits_{N \to \infty} \frac{\log_b{|\Omega'\vert_N|}}{N}\\
        & =\frac{\htop(\Omega\vert\Omega', \sigma)}{\log a} + 
      \frac{\htop(\Omega', \sigma)}{\log b}=\madim\left(X_\Omega, \sigma, d\right) .
    \end{split}
\end{equation*}
\end{proof}

\begin{rem}
More generally, for a subshift $\mS \subset F^{\N}$ one can define the \textbf{projective Assouad dimension} as
\[ \pdim(\mS) = \lim \limits_{N \to \infty} \frac{\adim (\tau_N (\mS))}{N},\]
where $\tau_N: F^{\N}\to F^N$ the projection onto the first $N$-coordinates. The limit exists due to the subadditivity of the Assouad dimension: $\adim(A \times B) \leq \adim(A) + \adim(B)$. This is an Assouad dimension version of the projective dimension introduced by Gromov \cite{G}. It is not difficult to observe that $\madim(\mS, \sigma, d) \leq \pdim(\mS)$ holds and this inequality can be strict (for instance, this is the case for the subshift from example \ref{ex: mdim def compare}). Proposition \ref{prop: BM prodim} says that equality holds for the Bedford-McMullen carpet systems.
\end{rem}

 \section{On other possible definitions of mean Assouad dimension}\label{sec: other def}

In this section we discuss other possible definitions of mean Assouad dimension and compare them with the one adopted in this paper. We hope to convince the reader that our choice is reasonable.

First, we show that exchanging the order of $\lim \limits_{M \to \infty}$ and $\sup \limits_{x \in X}$ in the definition leads to the same quantity.

\begin{prop}\label{prop: madim alter def}
For $0 < \rho < r$ define

\[ \overline{S}(X, r, \rho) = \sup \limits_{x \in X} \limsup \limits_{M \to \infty} \frac{1}{M} \log N_{d_M}(B_{d_M}(x,r),\rho) \]
and
\[ \underline{S}(X, r, \rho) = \sup \limits_{x \in X} \liminf \limits_{M \to \infty} \frac{1}{M} \log N_{d_M}(B_{d_M}(x,r),\rho). \]
Then
\[
\begin{split}
\madim(X, T, d) & = \inf \left\{ s > 0 : \underset{C>0}{\exists}\ \underset{0 < \rho < r}{\forall}\ e^{\overline{S}(X, r, \rho)} \leq C(r /\rho)^s \right\} \\
& = \inf \left\{ s > 0 : \underset{C>0}{\exists}\ \underset{0 < \rho < r}{\forall}\ e^{\underline{S}(X, r, \rho)} \leq C(r /\rho)^s \right\}.
\end{split}
\]
\end{prop}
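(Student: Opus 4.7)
My plan is to prove the two nontrivial equalities by combining three ingredients: an elementary monotonicity argument, Proposition \ref{prop: madim stable def}, and the subadditive ergodic theorem together with a variational principle. Denote by $\madim_{\overline{S}}$ and $\madim_{\underline{S}}$ the two right-hand infima in the statement.

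First, for every $x \in X$, $M \in \N$ and $0 < \rho < r$, $N_{d_M}(B_{d_M}(x,r),\rho) \leq \sup_{y\in X} N_{d_M}(B_{d_M}(y,r),\rho)$, and by Proposition \ref{subadditivity} the latter divided by $M$ converges to $S(X,r,\rho)$. Passing to $\liminf$ and $\limsup$ in $M$ and then to the supremum in $x$ yields the pointwise chain $\underline{S}(X,r,\rho) \leq \overline{S}(X,r,\rho) \leq S(X,r,\rho)$, and consequently $\madim_{\underline{S}} \leq \madim_{\overline{S}} \leq \madim(X,T,d)$. For the reverse inequality $\madim(X,T,d) \leq \madim_{\overline{S}}$, I invoke Proposition \ref{prop: madim stable def}, which expresses $\madim(X,T,d)$ via the stable-set quantity $S_\Z$. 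The inclusion $B_{d_\Z}(x,r) \subset B_{d_M}(x,r)$ for every $M$ gives $S_\Z(X,r,\rho) \leq \overline{S}(X,r,\rho)$ pointwise, hence $\madim(X,T,d) \leq \madim_{\overline{S}}$. Combined with the first step, $\madim(X,T,d) = \madim_{\overline{S}}$.

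For the last equality, the plan is to prove the stronger $S(X,r,\rho) \leq \underline{S}(X,r,\rho)$ for every $0 < \rho < r$. Setting $a_M(x) := \log N_{d_M}(B_{d_M}(x,r),\rho)$, the proof of Proposition \ref{subadditivity} actually establishes the cocycle inequality $a_{M+N}(x) \leq a_M(x) + a_N(T^M x)$ for every $x \in X$, not only the subadditivity of $M \mapsto \sup_x a_M(x)$. Hence Kingman's subadditive ergodic theorem applies to every $T$-invariant Borel probability measure $\mu$ on $X$: the limit $\lim_M a_M(x)/M$ exists $\mu$-almost everywhere and equals $\gamma(\mu) := \inf_M \tfrac{1}{M}\int a_M\,d\mu$. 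Invoking the subadditive variational principle of Cao--Feng--Huang, $S(X,r,\rho) = \sup_\mu \gamma(\mu)$. For any $\varepsilon > 0$, choose $\mu_\varepsilon$ with $\gamma(\mu_\varepsilon) \geq S(X,r,\rho) - \varepsilon$ and a $\mu_\varepsilon$-generic $x^* \in X$: at $x^*$, $\liminf_M a_M(x^*)/M = \gamma(\mu_\varepsilon) \geq S(X,r,\rho) - \varepsilon$, so $\underline{S}(X,r,\rho) \geq S(X,r,\rho) - \varepsilon$. Letting $\varepsilon \to 0$ yields $\underline{S} \geq S$, hence $\underline{S} = S$ and $\madim_{\underline{S}} = \madim(X,T,d)$.

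The main obstacle is the applicability of the subadditive variational principle, which classically demands some upper semi-continuity of the cocycle in $x$. Covering numbers $x \mapsto N_{d_M}(B_{d_M}(x,r),\rho)$ are integer-valued and need not be u.s.c., but they satisfy the weaker property $\limsup_{n \to \infty} N_{d_M}(B_{d_M}(x_n,r),\rho) \leq N_{d_M}(B_{d_M}(x,r+\eta),\rho)$ whenever $x_n \to x$ and $\eta > 0$. The cocycle can therefore be replaced by its u.s.c. regularization obtained from an arbitrarily small enlargement of $r$, a modification that is absorbed into the constant $C$ in the definition of $\madim$ and so does not alter the value of the dimension.
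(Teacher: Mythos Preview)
Your route to $\madim=\madim_{\overline{S}}$ via Proposition~\ref{prop: madim stable def} is correct but circuitous; the paper obtains both equalities at once from a single elementary compactness argument: pick near-maximizers $x_M$ of $\frac{1}{M}\log N_{d_M}(B_{d_M}(\cdot,r),\rho)$, extract a subsequential limit $x^*$, and use $B_{d_{M_k}}(x_{M_k},r)\subset B_{d_{M_k}}(x^*,2r)$ to deduce $S(X,r,\rho)\le \underline{S}(X,2r,\rho)$. No ergodic theory is needed, and the doubling of $r$ is absorbed into the constant $C$.

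For the second equality your plan has a genuine gap. You aim at the exact identity $S(X,r,\rho)=\underline{S}(X,r,\rho)$, which is stronger than what is required and not what the paper proves. More importantly, the appeal to the Cao--Feng--Huang/Schreiber variational principle is not justified: those results require the cocycle to be continuous (or at least upper semi-continuous), and $x\mapsto N_{d_M}(B_{d_M}(x,r),\rho)$ is neither established to be u.s.c.\ nor even shown to be Borel measurable, which Kingman's theorem also needs. Your proposed repair conflates two different objects: enlarging $r$ to $r+\eta$ produces a new cocycle $a_M^{(\eta)}$ with exactly the same regularity defects as $a_M$, so it is not a ``u.s.c.\ regularization''. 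The honest upper envelope $a_M^*(x)=\limsup_{y\to x}a_M(y)$ \emph{is} u.s.c.\ and subadditive with the same supremum, so the variational principle would apply to it; but Kingman then yields convergence of $a_M^*(x)/M$, and since $a_M\le a_M^*$ this bounds $\liminf a_M/M$ from \emph{above}, the wrong side. One can rescue the scheme by sandwiching $a_M\le a_M^*\le a_M^{(\eta)}$ and concluding only $\underline{S}(X,r+\eta,\rho)\ge S(X,r,\rho)$, but this is precisely the paper's inequality obtained with far less machinery.
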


\begin{proof}
We shall prove the following inequality:
\begin{equation}\label{eq: S underS ineq}
S(X, r, \rho) \leq \underline{S}(X, 2r, \rho).
\end{equation}

As clearly $\underline{S}(X, r, \rho) \leq \overline{S}(X, r, \rho) \leq S(X, r, \rho)$, obtaining \eqref{eq: S underS ineq} will finish the proof.

For proving \eqref{eq: S underS ineq}, fix $\eps > 0$. By the definition of $S(X, r, \rho)$, for all $M$ large enough there exists $x_M \in X$ such that
\[ \frac{1}{M} \log N_{d_M}(B_{d_M}(x_M,r),\rho) \geq S(X, r, \rho) - \eps. \]
Take a convergent subsequence $x_{M_k} \to x^*$. Then for $k$ large enough to guarantee $d(x_{M_k}, x^*) < r$, we have
\[
S(X, r, \rho) \leq \frac{1}{M_k} \log N_{d_{M_k}}(B_{d_{M_k}}(x_{M_k},r),\rho) + \eps  \leq \frac{1}{M_k} \log N_{d_{M_k}}(B_{d_{M_k}}(x^*,2r),\rho) + \eps
\]
Taking $\liminf \limits_{k \to \infty}$, we obtain
\[
S(X, r, \rho) \leq \liminf \limits_{k \to \infty} \frac{1}{M_k} \log N_{d_{M_k}}(B_{d_{M_k}}(x^*,2r),\rho) + \eps,
\]
so
\[
S(X, r, \rho) \leq \sup \limits_{x \in X} \liminf \limits_{M \to \infty} \frac{1}{M} \log N_{d_{M}}(B_{d_{M}}(x,2r),\rho) + \eps = \underline{S}(X, 2r, \rho) + \eps.
\]
As $\eps$ is arbitrary, \eqref{eq: S underS ineq} is established.
\end{proof}

It is straightforward to extend Proposition \ref{prop: madim alter def} to the mean Assouad spectrum.

In our definition of $\madim(X,T,d)$, uniformity of the bound is required with respect to $0 < \rho < r$ and $x \in X$ (as in the definition of the Assouad dimension), but with those fixed one is allowed to take the limit with $M \to \infty$. A stronger requirement would be to require a uniform bound also with respect to $M$. This however leads to an invariant with rather undesirable properties from the dynamical point of view. To be more precise, define

\[
\begin{split}
\widehat{\madim}(X,T,d) = \inf \Big\{ s > 0 : & \text{ there exists } C>0 \text{ and } M(C) \in \N \text{ such that for all } M > M(C) \\
& \sup \limits_{x \in X} N_{d_M}(B_{d_M}(x, r), \rho) \leq C (r / \rho)^{sM} \text{ for all } r > 0 \text{ and } 0 < \rho < r  \Big\}.
\end{split}
\]

The following example shows two drawbacks of the above definition. First, $\widehat{\madim}(X,T,d) > \widehat{\madim}(\Om(T),T,d)$ might occur. Second, subshifts of $[0,1]^\Z$ might have infinite $\widehat{\madim}$.

\begin{example}\label{ex: mdim def compare}
We shall use \cite[Example IV-C.2]{mmdimcompress}. Endow $[0,1]^\Z$ with the metric $d(x,y) = \sum \limits_{i \in \Z} 2^{-|i|}|x_i - y_i|$, where $x = (x_i)_{i \in \Z},\ y = (y_i)_{i \in \Z}$.  Let $\sigma : [0,1]^\Z \to [0,1]^\Z$ be the left shift. For each $m \in \N$, let 
\[ A_m = \{ (x_i)_{i \in \Z} : 0 \leq x_i \leq 1/m \text{ if } 0 \leq i < m, \text{ and } x_i = 0 \text{ otherwise } \},  \]
\[X_m = \bigcup \limits_{n \in \Z} \sigma^n (A_m) \]
and set
\[ X = \bigcup \limits_{m=1}^\infty X_m.\]
It is easy to see that $X$ is a compact $\sigma$-invariant set, hence $(X, \sigma, d)$ is a topological dynamical system. Moreover, it has the property that $\lim \limits_{n \to \infty} \sigma^n x = \vec{0} = (\ldots, 0,0,0, \ldots)$ for every $x \in X$, hence $\Omega(\sigma) = \{ \vec{0} \}$. Therefore Proposition \ref{prop: nonwandering} gives that
\[ \madim(X, \sigma, d) = \madim(\Omega(\sigma), \sigma, d) = 0. \]
We claim that
\begin{equation}\label{eq: widehat madim 1}
\widehat{\madim}(X, \sigma, d) = \infty \text{ and } \widehat{\madim}(\Omega(\sigma),\sigma,d) = 0.
\end{equation}

The equality $\widehat{\madim}(\Omega(\sigma),\sigma,d) = 0$ follows directly from $\Omega(\sigma) = \{ \vec{0} \}$.

To see $\widehat{\madim}(X, \sigma, d) = \infty$ we argue as follows. As the shift map $\sigma$ is Lipschitz continuous in metric $d$, we see that for each $M\geq1$, the metric $d_M$ is bi-Lipschitz equivalent with $d$. Therefore
\begin{equation}\label{eq: adim d_M}
\adim(X,d) = \adim(X, d_M) = \infty \text{ for } M \geq 1.
\end{equation}
Suppose now $\widehat{\madim}(X, \sigma, d) < \infty$. Then, there exists $0 < s < \infty$ and $C>0$ such that for all $M$ large enough we have
\[\sup \limits_{x \in X} N_{d_M}(B_{d_M}(x, r), \rho) \leq C (r / \rho)^{sM} \text{ for all } r > 0 \text{ and } 0 < \rho < r.\]
This however implies $\adim(X,d_M)\leq sM$ for large enough $M$, a contradiction with \eqref{eq: adim d_M}.

\end{example}

\begin{rem}
Recently, an alternative definition of mean Assouad dimension was provided in \cite{LSL24}. In our notation the definition is as follows:

\[
\begin{split}
\widetilde{\madim}(X,T,d) = \inf \Big\{ a : & \text{ there exists } C>0 \text{ and } M(C) \in \N \text{ such that for all } M > M(C) \\
& \sup \limits_{x \in X} N_{d_M}(B_{d_M}(x, r), \rho) \leq C (r / \rho)^a \text{ for all } r > 0 \text{ and } 0 < \rho < \min\{1, r\}  \Big\}.
\end{split}
\]

This notion is however of a different flavour than the mean dimensions which we have discussed so far, as it has the property
\[\widetilde{\madim}(X,T,d) < \infty\ \Longrightarrow\ \htop(X,T,d) = 0\]
(as by taking $r > \diam(X)$ we in fact have $\lim \limits_{M \to \infty } \frac{1}{M} \log N_{d_M}(X,\rho) = 0$ for all $0 < \rho < 1$ provided that $\widetilde{\madim}(X,T,d) < \infty$).
\end{rem}

\bibliographystyle{alpha}
\bibliography{dynamical-spectra}

@book {Fra21,
    AUTHOR = {Fraser, Jonathan M.},
     TITLE = {Assouad dimension and fractal geometry},
    SERIES = {Cambridge Tracts in Mathematics},
    VOLUME = {222},
 PUBLISHER = {Cambridge University Press, Cambridge},
      YEAR = {2021},
     PAGES = {xvi+269},
      ISBN = {978-1-108-47865-6},
   MRCLASS = {28-02 (28A78 28A80)},
  MRNUMBER = {4411274},
MRREVIEWER = {Tushar Das},
       DOI = {10.1017/9781108778459},
       URL = {https://doi.org/10.1017/9781108778459},
}

@article {FY18,
    AUTHOR = {Fraser, Jonathan M. and Yu, Han},
     TITLE = {New dimension spectra: finer information on scaling and
              homogeneity},
   JOURNAL = {Adv. Math.},
  FJOURNAL = {Advances in Mathematics},
    VOLUME = {329},
      YEAR = {2018},
     PAGES = {273--328},
      ISSN = {0001-8708},
   MRCLASS = {28A80 (26A21 28A78 30L05)},
  MRNUMBER = {3783415},
MRREVIEWER = {Jeremy T. Tyson},
       DOI = {10.1016/j.aim.2017.12.019},
       URL = {https://doi.org/10.1016/j.aim.2017.12.019},
}

@article {FY18b,
    AUTHOR = {Fraser, Jonathan M. and Yu, Han},
     TITLE = {Assouad-type spectra for some fractal families},
   JOURNAL = {Indiana Univ. Math. J.},
  FJOURNAL = {Indiana University Mathematics Journal},
    VOLUME = {67},
      YEAR = {2018},
    NUMBER = {5},
     PAGES = {2005--2043},
      ISSN = {0022-2518},
   MRCLASS = {28A80 (60J80)},
  MRNUMBER = {3875249},
MRREVIEWER = {Sascha Troscheit},
       DOI = {10.1512/iumj.2018.67.7509},
       URL = {https://doi.org/10.1512/iumj.2018.67.7509},
}

@article {Mc84,
    AUTHOR = {McMullen, Curt},
     TITLE = {The {H}ausdorff dimension of general {S}ierpi\'{n}ski carpets},
   JOURNAL = {Nagoya Math. J.},
  FJOURNAL = {Nagoya Mathematical Journal},
    VOLUME = {96},
      YEAR = {1984},
     PAGES = {1--9},
      ISSN = {0027-7630},
   MRCLASS = {11K55},
  MRNUMBER = {771063},
MRREVIEWER = {A. D. Pollington},
       DOI = {10.1017/S0027763000021085},
       URL = {https://doi.org/10.1017/S0027763000021085},
}

@article {Mac11,
    AUTHOR = {Mackay, John M.},
     TITLE = {Assouad dimension of self-affine carpets},
   JOURNAL = {Conform. Geom. Dyn.},
  FJOURNAL = {Conformal Geometry and Dynamics. An Electronic Journal of the
              American Mathematical Society},
    VOLUME = {15},
      YEAR = {2011},
     PAGES = {177--187},
   MRCLASS = {37F35 (28A78)},
  MRNUMBER = {2846307},
MRREVIEWER = {Kevin Wildrick},
       DOI = {10.1090/S1088-4173-2011-00232-3},
       URL = {https://doi.org/10.1090/S1088-4173-2011-00232-3},
}

@article {Fra14,
    AUTHOR = {Fraser, Jonathan M.},
     TITLE = {Assouad type dimensions and homogeneity of fractals},
   JOURNAL = {Trans. Amer. Math. Soc.},
  FJOURNAL = {Transactions of the American Mathematical Society},
    VOLUME = {366},
      YEAR = {2014},
    NUMBER = {12},
     PAGES = {6687--6733},
      ISSN = {0002-9947},
   MRCLASS = {28A80 (28A20 28A78 28C15)},
  MRNUMBER = {3267023},
MRREVIEWER = {Takahisa Miyata},
       DOI = {10.1090/S0002-9947-2014-06202-8},
       URL = {https://doi.org/10.1090/S0002-9947-2014-06202-8},
}

@phdthesis {Bed84,
    AUTHOR = {Bedford, Tim},
     TITLE = {Crinkly curves, Markov partitions and box dimensions in self-similar sets},
      NOTE = {University of Warwick},
      YEAR = {1984},
   MRCLASS = {Thesis},
}

@article {BK24,
    AUTHOR = {Banaji, Amlan and Kolossv\'{a}ry, Istv\'{a}n},
     TITLE = {Intermediate dimensions of {B}edford-{M}c{M}ullen carpets with
              applications to {L}ipschitz equivalence},
   JOURNAL = {Adv. Math.},
  FJOURNAL = {Advances in Mathematics},
    VOLUME = {449},
      YEAR = {2024},
     PAGES = {Paper No. 109735, 69},
      ISSN = {0001-8708},
   MRCLASS = {28A80 (28A78 37C45)},
  MRNUMBER = {4751541},
       DOI = {10.1016/j.aim.2024.109735},
       URL = {https://doi.org/10.1016/j.aim.2024.109735},
}

@incollection {Fra21b,
    AUTHOR = {Fraser, Jonathan M.},
     TITLE = {Interpolating between dimensions},
 BOOKTITLE = {Fractal geometry and stochastics {VI}},
    SERIES = {Progr. Probab.},
    VOLUME = {76},
     PAGES = {3--24},
 PUBLISHER = {Birkh\"{a}user/Springer, Cham},
      YEAR = {2021},
   MRCLASS = {28A80},
  MRNUMBER = {4237247},
       DOI = {10.1007/978-3-030-59649-1\_1},
       URL = {https://doi.org/10.1007/978-3-030-59649-1_1},
}

@article {BFKR24,
    AUTHOR = {Banaji, Amlan and Fraser, Jonathan M. and Kolossv\'{a}ry,
              Istv\'{a}n and Rutar, Alex},
     TITLE = {Assouad spectrum of {G}atzouras--{L}alley carpets},
   JOURNAL = {Adv. Math.},
  FJOURNAL = {Advances in Mathematics},
    VOLUME = {484},
      YEAR = {2026},
     PAGES = {Paper No. 110707},
      ISSN = {0001-8708,1090-2082},
   MRCLASS = {28A80 (37C45 49N15)},
  MRNUMBER = {4993399},
       DOI = {10.1016/j.aim.2025.110707},
       URL = {https://doi.org/10.1016/j.aim.2025.110707},
}

@misc{Nar24,
      title={{URP}, comparison, mean dimension, and sharp shift embeddability}, 
      author={Petr Naryshkin},
      url={https://arxiv.org/abs/2410.01757v2}, 
      eprint={2410.01757},
      year={2024},
      archivePrefix={arXiv},
      primaryClass={math.DS},
      HOWPUBLISHED={Preprint, available at https://arxiv.org/abs/2410.01757v2}, 
}

@book {Rob11,
    AUTHOR = {Robinson, James C.},
     TITLE = {Dimensions, embeddings, and attractors},
    SERIES = {Cambridge Tracts in Mathematics},
    VOLUME = {186},
 PUBLISHER = {Cambridge University Press, Cambridge},
      YEAR = {2011},
     PAGES = {xii+205},
      ISBN = {978-0-521-89805-8},
   MRCLASS = {37C45 (28A78 28A80 37-02 37C70 37L05 37L30)},
  MRNUMBER = {2767108},
MRREVIEWER = {Vaughn Climenhaga},
}

@book{Coo15,
  title={Topological dimension and dynamical systems},
  author={Coornaert, Michel},
  year={2015},
  publisher={Springer}
}

@article {Tsu22,
    AUTHOR = {Tsukamoto, Masaki},
     TITLE = {Remark on the local nature of metric mean dimension},
   JOURNAL = {Kyushu J. Math.},
  FJOURNAL = {Kyushu Journal of Mathematics},
    VOLUME = {76},
      YEAR = {2022},
    NUMBER = {1},
     PAGES = {143--162},
      ISSN = {1340-6116},
   MRCLASS = {37B40 (32S45 37B52 37C45)},
  MRNUMBER = {4414307},
MRREVIEWER = {Bingbing Liang},
       DOI = {10.2206/kyushujm.76.143},
       URL = {https://doi.org/10.2206/kyushujm.76.143},
}

@article {Shi22,
    AUTHOR = {Shi, Ruxi},
     TITLE = {On variational principles for metric mean dimension},
   JOURNAL = {IEEE Trans. Inform. Theory},
  FJOURNAL = {Institute of Electrical and Electronics Engineers.
              Transactions on Information Theory},
    VOLUME = {68},
      YEAR = {2022},
    NUMBER = {7},
     PAGES = {4282--4288},
      ISSN = {0018-9448},
   MRCLASS = {37D35 (94A17)},
  MRNUMBER = {4449040},
MRREVIEWER = {Xinsheng Wang},
       DOI = {10.1109/tit.2022.3157786},
       URL = {https://doi.org/10.1109/tit.2022.3157786},
}

@misc{VV17,
      title={Rate distortion theory, metric mean dimension and measure theoretic entropy}, 
      author={Anibal Velozo and Renato Velozo},
      eprint={1707.05762},
      year={2017},
      archivePrefix={arXiv},
      primaryClass={math.DS},
      HOWPUBLISHED={Preprint, available at https://arxiv.org/abs/1707.05762}, 
}

@misc{Levin23,
      title={Finite-to-one equivariant maps and mean dimension}, 
      author={Michael Levin},
      eprint={2312.04689},
      year={2023},
      archivePrefix={arXiv},
      primaryClass={math.DS},
      HOWPUBLISHED={Preprint, available at https://arxiv.org/abs/2312.04689}, 
}

@article {mmdimcompress,
	AUTHOR = {Gutman, Yonatan and {\'{S}}piewak, Adam},
	TITLE = {Metric mean dimension and analog compression},
	JOURNAL = {IEEE Trans. Inform. Theory},
	FJOURNAL = {Institute of Electrical and Electronics Engineers.
	Transactions on Information Theory},
	VOLUME = {66},
	YEAR = {2020},
	NUMBER = {11},
	PAGES = {6977--6998},
	ISSN = {0018-9448},
	MRCLASS = {94A12 (94A17)},
	MRNUMBER = {4173622},
	MRREVIEWER = {Fei Chen},
	DOI = {10.1109/TIT.2020.2992388},
	URL = {https://doi.org/10.1109/TIT.2020.2992388},
}

@article {G,
    AUTHOR = {Gromov, Misha},
     TITLE = {Topological invariants of dynamical systems and spaces of
              holomorphic maps. {I}},
   JOURNAL = {Math. Phys. Anal. Geom.},
  FJOURNAL = {Mathematical Physics, Analysis and Geometry. An International
              Journal Devoted to the Theory and Applications of Analysis and
              Geometry to Physics},
    VOLUME = {2},
      YEAR = {1999},
    NUMBER = {4},
     PAGES = {323--415},
      ISSN = {1385-0172},
     CODEN = {MPAGFO},
   MRCLASS = {37B99 (32H02 53C23 58E20)},
  MRNUMBER = {MR1742309 (2001j:37037)},
MRREVIEWER = {Boris Hasselblatt},
}

@book {Dow11,
    AUTHOR = {Downarowicz, Tomasz},
     TITLE = {Entropy in dynamical systems},
    SERIES = {New Mathematical Monographs},
    VOLUME = {18},
 PUBLISHER = {Cambridge University Press, Cambridge},
      YEAR = {2011},
     PAGES = {xii+391},
      ISBN = {978-0-521-88885-1},
   MRCLASS = {37-02 (28D05 37A35 37B40 54H20 94A17)},
  MRNUMBER = {2809170},
MRREVIEWER = {Michael Hochman},
       DOI = {10.1017/CBO9780511976155},
       URL = {https://doi.org/10.1017/CBO9780511976155},
}

@article {FH16,
    AUTHOR = {Feng, De-Jun and Huang, Wen},
     TITLE = {Variational principle for weighted topological pressure},
   JOURNAL = {J. Math. Pures Appl. (9)},
  FJOURNAL = {Journal de Math\'{e}matiques Pures et Appliqu\'{e}es. Neuvi\`eme S\'{e}rie},
    VOLUME = {106},
      YEAR = {2016},
    NUMBER = {3},
     PAGES = {411--452},
      ISSN = {0021-7824},
   MRCLASS = {37D35 (37B40 37C45)},
  MRNUMBER = {3520443},
MRREVIEWER = {Zhiming Li},
       DOI = {10.1016/j.matpur.2016.02.016},
       URL = {https://doi.org/10.1016/j.matpur.2016.02.016},
}

@article {Tsuweighted,
    AUTHOR = {Tsukamoto, Masaki},
     TITLE = {New approach to weighted topological entropy and pressure},
   JOURNAL = {Ergodic Theory Dynam. Systems},
  FJOURNAL = {Ergodic Theory and Dynamical Systems},
    VOLUME = {43},
      YEAR = {2023},
    NUMBER = {3},
     PAGES = {1004--1034},
      ISSN = {0143-3857},
   MRCLASS = {37D35 (37A35 37B40 37C45)},
  MRNUMBER = {4544152},
       DOI = {10.1017/etds.2021.173},
       URL = {https://doi.org/10.1017/etds.2021.173},
}

@article {Mis76conditional,
    AUTHOR = {Micha\l  Misiurewicz},
     TITLE = {Topological conditional entropy},
   JOURNAL = {Studia Math.},
  FJOURNAL = {Polska Akademia Nauk. Instytut Matematyczny. Studia
              Mathematica},
    VOLUME = {55},
      YEAR = {1976},
    NUMBER = {2},
     PAGES = {175--200},
      ISSN = {0039-3223},
   MRCLASS = {54H20},
  MRNUMBER = {415587},
MRREVIEWER = {Timothy N.t Goodman},
       DOI = {10.4064/sm-55-2-175-200},
       URL = {https://doi.org/10.4064/sm-55-2-175-200},
}

@article {DS02fiber,
    AUTHOR = {Downarowicz, Tomasz and Serafin, Jacek},
     TITLE = {Fiber entropy and conditional variational principles in
              compact non-metrizable spaces},
   JOURNAL = {Fund. Math.},
  FJOURNAL = {Fundamenta Mathematicae},
    VOLUME = {172},
      YEAR = {2002},
    NUMBER = {3},
     PAGES = {217--247},
      ISSN = {0016-2736},
   MRCLASS = {37B40 (37A35)},
  MRNUMBER = {1898686},
MRREVIEWER = {Doris Fiebig},
       DOI = {10.4064/fm172-3-2},
       URL = {https://doi.org/10.4064/fm172-3-2},
}

@article {Tsu25carpets,
    AUTHOR = {Tsukamoto, Masaki},
     TITLE = {Mean {H}ausdorff dimension of some infinite-dimensional
              fractals},
   JOURNAL = {J. Anal. Math.},
  FJOURNAL = {Journal d'Analyse Math\'{e}matique},
    VOLUME = {155},
      YEAR = {2025},
    NUMBER = {1},
     PAGES = {235--286},
      ISSN = {0021-7670},
   MRCLASS = {28A80 (28A78)},
  MRNUMBER = {4905311},
       DOI = {10.1007/s11854-024-0353-0},
       URL = {https://doi.org/10.1007/s11854-024-0353-0},
}

@article {LW00,
    AUTHOR = {Lindenstrauss, Elon and Weiss, Benjamin},
     TITLE = {Mean topological dimension},
   JOURNAL = {Israel J. Math.},
  FJOURNAL = {Israel Journal of Mathematics},
    VOLUME = {115},
      YEAR = {2000},
     PAGES = {1--24},
      ISSN = {0021-2172},
   MRCLASS = {37B99 (37B05 37B40)},
  MRNUMBER = {1749670},
MRREVIEWER = {Kathleen M. Madden},
       DOI = {10.1007/BF02810577},
       URL = {https://doi.org/10.1007/BF02810577},
}

@article {Jur23,
    AUTHOR = {Jurga, Natalia},
     TITLE = {Nonexistence of the box dimension for dynamically invariant
              sets},
   JOURNAL = {Anal. PDE},
  FJOURNAL = {Analysis \& PDE},
    VOLUME = {16},
      YEAR = {2023},
    NUMBER = {10},
     PAGES = {2385--2399},
      ISSN = {2157-5045},
   MRCLASS = {37C45 (28A80 37D20)},
  MRNUMBER = {4678144},
MRREVIEWER = {Jonathan MacDonald Fraser},
       DOI = {10.2140/apde.2023.16.2385},
       URL = {https://doi.org/10.2140/apde.2023.16.2385},
}

@article {BR22,
    AUTHOR = {Banaji, Amlan and Rutar, Alex},
     TITLE = {Attainable forms of intermediate dimensions},
   JOURNAL = {Ann. Fenn. Math.},
  FJOURNAL = {Annales Fennici Mathematici},
    VOLUME = {47},
      YEAR = {2022},
    NUMBER = {2},
     PAGES = {939--960},
      ISSN = {2737-0690},
   MRCLASS = {28A78 (28A80)},
  MRNUMBER = {4448745},
MRREVIEWER = {Elena Hadzieva},
       DOI = {10.54330/afm.120529},
       URL = {https://doi.org/10.54330/afm.120529},
}

@article {GHM21,
    AUTHOR = {Garc\'{\i}a, Ignacio and Hare, Kathryn E. and Mendivil, Franklin},
     TITLE = {Intermediate {A}ssouad-like dimensions},
   JOURNAL = {J. Fractal Geom.},
  FJOURNAL = {Journal of Fractal Geometry. Mathematics of Fractals and
              Related Topics},
    VOLUME = {8},
      YEAR = {2021},
    NUMBER = {3},
     PAGES = {201--245},
      ISSN = {2308-1309},
   MRCLASS = {28A78 (28A80)},
  MRNUMBER = {4321216},
MRREVIEWER = {Jun Jie Miao},
       DOI = {10.4171/jfg/102},
       URL = {https://doi.org/10.4171/jfg/102},
}

@article {Feng24,
    AUTHOR = {Feng, Zhou},
     TITLE = {Intermediate dimensions under self-affine codings},
   JOURNAL = {Math. Z.},
  FJOURNAL = {Mathematische Zeitschrift},
    VOLUME = {307},
      YEAR = {2024},
    NUMBER = {1},
     PAGES = {Paper No. 21, 29},
      ISSN = {0025-5874},
   MRCLASS = {28A80 (31B15 60G22)},
  MRNUMBER = {4739520},
MRREVIEWER = {Amlan Banaji},
       DOI = {10.1007/s00209-024-03490-z},
       URL = {https://doi.org/10.1007/s00209-024-03490-z},
}

@article {BF23,
    AUTHOR = {Banaji, Amlan and Fraser, Jonathan M.},
     TITLE = {Intermediate dimensions of infinitely generated attractors},
   JOURNAL = {Trans. Amer. Math. Soc.},
  FJOURNAL = {Transactions of the American Mathematical Society},
    VOLUME = {376},
      YEAR = {2023},
    NUMBER = {4},
     PAGES = {2449--2479},
      ISSN = {0002-9947},
   MRCLASS = {28A80 (11K50 37B10)},
  MRNUMBER = {4557871},
MRREVIEWER = {Manuel Mor\'{a}n},
       DOI = {10.1090/tran/8766},
       URL = {https://doi.org/10.1090/tran/8766},
}

@article {Rut24,
    AUTHOR = {Rutar, Alex},
     TITLE = {Attainable forms of {A}ssouad spectra},
   JOURNAL = {Indiana Univ. Math. J.},
  FJOURNAL = {Indiana University Mathematics Journal},
    VOLUME = {73},
      YEAR = {2024},
    NUMBER = {4},
     PAGES = {1331--1356},
      ISSN = {0022-2518},
   MRCLASS = {28A78},
  MRNUMBER = {4806752},
}

@article {BF24,
    AUTHOR = {Banaji, Amlan and Fraser, Jonathan M.},
     TITLE = {Assouad type dimensions of infinitely generated self-conformal
              sets},
   JOURNAL = {Nonlinearity},
  FJOURNAL = {Nonlinearity},
    VOLUME = {37},
      YEAR = {2024},
    NUMBER = {4},
     PAGES = {Paper No. 045004, 32},
      ISSN = {0951-7715},
   MRCLASS = {28A80 (11K50)},
  MRNUMBER = {4714081},
MRREVIEWER = {Alex Rutar},
       DOI = {10.1088/1361-6544/ad2864},
       URL = {https://doi.org/10.1088/1361-6544/ad2864},
}

@article {BC23,
    AUTHOR = {Banaji, Amlan and Chen, Haipeng},
     TITLE = {Dimensions of popcorn-like pyramid sets},
   JOURNAL = {J. Fractal Geom.},
  FJOURNAL = {Journal of Fractal Geometry. Mathematics of Fractals and
              Related Topics},
    VOLUME = {10},
      YEAR = {2023},
    NUMBER = {1-2},
     PAGES = {151--168},
      ISSN = {2308-1309},
   MRCLASS = {28A80 (11B57)},
  MRNUMBER = {4627104},
       DOI = {10.4171/jfg/135},
       URL = {https://doi.org/10.4171/jfg/135},
}

@article {CFY22,
    AUTHOR = {Chen, Haipeng and Fraser, Jonathan M. and Yu, Han},
     TITLE = {Dimensions of the popcorn graph},
   JOURNAL = {Proc. Amer. Math. Soc.},
  FJOURNAL = {Proceedings of the American Mathematical Society},
    VOLUME = {150},
      YEAR = {2022},
    NUMBER = {11},
     PAGES = {4729--4742},
      ISSN = {0002-9939},
   MRCLASS = {28A80 (11B57)},
  MRNUMBER = {4489308},
MRREVIEWER = {Tomas Persson},
       DOI = {10.1090/proc/15729},
       URL = {https://doi.org/10.1090/proc/15729},
}

@article {Lin99,
    AUTHOR = {Lindenstrauss, Elon},
     TITLE = {Mean dimension, small entropy factors and an embedding
              theorem},
   JOURNAL = {Inst. Hautes \'{E}tudes Sci. Publ. Math.},
  FJOURNAL = {Institut des Hautes \'{E}tudes Scientifiques. Publications
              Math\'{e}matiques},
    NUMBER = {89},
      YEAR = {1999},
     PAGES = {227--262 (2000)},
      ISSN = {0073-8301},
   MRCLASS = {37B40 (37A35 54F45)},
  MRNUMBER = {1793417},
MRREVIEWER = {Mahendra G. Nadkarni},
       URL = {http://www.numdam.org/item?id=PMIHES_1999__89__227_0},
}

@article {Gut15,
    AUTHOR = {Gutman, Yonatan},
     TITLE = {Mean dimension and {J}aworski-type theorems},
   JOURNAL = {Proc. Lond. Math. Soc. (3)},
  FJOURNAL = {Proceedings of the London Mathematical Society. Third Series},
    VOLUME = {111},
      YEAR = {2015},
    NUMBER = {4},
     PAGES = {831--850},
      ISSN = {0024-6115},
   MRCLASS = {37A35 (37B10 54F45)},
  MRNUMBER = {3407186},
MRREVIEWER = {Vaughn Climenhaga},
       DOI = {10.1112/plms/pdv043},
       URL = {https://doi.org/10.1112/plms/pdv043},
}

@article {GT20,
    AUTHOR = {Gutman, Yonatan and Tsukamoto, Masaki},
     TITLE = {Embedding minimal dynamical systems into {H}ilbert cubes},
   JOURNAL = {Invent. Math.},
  FJOURNAL = {Inventiones Mathematicae},
    VOLUME = {221},
      YEAR = {2020},
    NUMBER = {1},
     PAGES = {113--166},
      ISSN = {0020-9910},
   MRCLASS = {37B05 (37A46 54F45 94A12)},
  MRNUMBER = {4105086},
MRREVIEWER = {Ryo Moore},
       DOI = {10.1007/s00222-019-00942-w},
       URL = {https://doi.org/10.1007/s00222-019-00942-w},
}

@article {GLT16,
    AUTHOR = {Gutman, Yonatan and Lindenstrauss, Elon and Tsukamoto, Masaki},
     TITLE = {Mean dimension of {$\Bbb{Z}^k$}-actions},
   JOURNAL = {Geom. Funct. Anal.},
  FJOURNAL = {Geometric and Functional Analysis},
    VOLUME = {26},
      YEAR = {2016},
    NUMBER = {3},
     PAGES = {778--817},
      ISSN = {1016-443X},
   MRCLASS = {37B40 (54F45)},
  MRNUMBER = {3540453},
MRREVIEWER = {Tom Meyerovitch},
       DOI = {10.1007/s00039-016-0372-9},
       URL = {https://doi.org/10.1007/s00039-016-0372-9},
}

@article {LT18,
    AUTHOR = {Lindenstrauss, Elon and Tsukamoto, Masaki},
     TITLE = {From rate distortion theory to metric mean dimension:
              variational principle},
   JOURNAL = {IEEE Trans. Inform. Theory},
  FJOURNAL = {Institute of Electrical and Electronics Engineers.
              Transactions on Information Theory},
    VOLUME = {64},
      YEAR = {2018},
    NUMBER = {5},
     PAGES = {3590--3609},
      ISSN = {0018-9448},
   MRCLASS = {94A17 (37C45)},
  MRNUMBER = {3798396},
MRREVIEWER = {Thomas Ward},
       DOI = {10.1109/TIT.2018.2806219},
       URL = {https://doi.org/10.1109/TIT.2018.2806219},
}

@article {LT19,
    AUTHOR = {Lindenstrauss, Elon and Tsukamoto, Masaki},
     TITLE = {Double variational principle for mean dimension},
   JOURNAL = {Geom. Funct. Anal.},
  FJOURNAL = {Geometric and Functional Analysis},
    VOLUME = {29},
      YEAR = {2019},
    NUMBER = {4},
     PAGES = {1048--1109},
      ISSN = {1016-443X},
   MRCLASS = {37C45 (28A75 94A34)},
  MRNUMBER = {3990194},
MRREVIEWER = {Tushar Das},
       DOI = {10.1007/s00039-019-00501-8},
       URL = {https://doi.org/10.1007/s00039-019-00501-8},
}

@misc{LSL24,
      title={On the mean ${\Psi}$-intermediate dimensions}, 
      author={Yu Liu and Bilel Selmi and Zhiming Li},
      url={https://arxiv.org/abs/2407.09843}, 
      eprint={2407.09843},
      year={2024},
      archivePrefix={arXiv},
      primaryClass={math.DS},
      HOWPUBLISHED={Preprint, available at https://arxiv.org/abs/2407.09843}, 
}

@article {Ban23,
    AUTHOR = {Banaji, Amlan},
     TITLE = {Generalised intermediate dimensions},
   JOURNAL = {Monatsh. Math.},
  FJOURNAL = {Monatshefte f\"{u}r Mathematik},
    VOLUME = {202},
      YEAR = {2023},
    NUMBER = {3},
     PAGES = {465--506},
      ISSN = {0026-9255},
   MRCLASS = {28A80 (28A78)},
  MRNUMBER = {4651960},
MRREVIEWER = {Alex Rutar},
       DOI = {10.1007/s00605-023-01884-5},
       URL = {https://doi.org/10.1007/s00605-023-01884-5},
}

@article {GQT19,
    AUTHOR = {Gutman, Yonatan and Qiao, Yixiao and Tsukamoto, Masaki},
     TITLE = {Application of signal analysis to the embedding problem of
              {$\Bbb Z^k$}-actions},
   JOURNAL = {Geom. Funct. Anal.},
  FJOURNAL = {Geometric and Functional Analysis},
    VOLUME = {29},
      YEAR = {2019},
    NUMBER = {5},
     PAGES = {1440--1502},
      ISSN = {1016-443X},
   MRCLASS = {37B52 (94A12)},
  MRNUMBER = {4025517},
MRREVIEWER = {Vladimir Garc\'{\i}a-Morales},
       DOI = {10.1007/s00039-019-00499-z},
       URL = {https://doi.org/10.1007/s00039-019-00499-z},
}

@article {Tsu25RandomBrody,
    AUTHOR = {Tsukamoto, Masaki},
     TITLE = {Rate distortion dimension of random {B}rody curves},
   JOURNAL = {Geom. Funct. Anal.},
  FJOURNAL = {Geometric and Functional Analysis},
    VOLUME = {35},
      YEAR = {2025},
    NUMBER = {3},
     PAGES = {915--978},
      ISSN = {1016-443X},
   MRCLASS = {32H30 (32Q45 37D15 37D35)},
  MRNUMBER = {4915746},
       DOI = {10.1007/s00039-025-00709-x},
       URL = {https://doi.org/10.1007/s00039-025-00709-x},
}

@article {Yang-Mills,
    AUTHOR = {Tsukamoto, Masaki},
     TITLE = {Large dynamics of {Y}ang-{M}ills theory: mean dimension
              formula},
   JOURNAL = {J. Anal. Math.},
  FJOURNAL = {Journal d'Analyse Math\'{e}matique},
    VOLUME = {134},
      YEAR = {2018},
    NUMBER = {2},
     PAGES = {455--499},
      ISSN = {0021-7670},
   MRCLASS = {58E15 (37L30 53C05)},
  MRNUMBER = {3771488},
MRREVIEWER = {W.-H. Steeb},
       DOI = {10.1007/s11854-018-0014-2},
       URL = {https://doi.org/10.1007/s11854-018-0014-2},
}

@article {Tsu18Brody,
    AUTHOR = {Tsukamoto, Masaki},
     TITLE = {Mean dimension of the dynamical system of {B}rody curves},
   JOURNAL = {Invent. Math.},
  FJOURNAL = {Inventiones Mathematicae},
    VOLUME = {211},
      YEAR = {2018},
    NUMBER = {3},
     PAGES = {935--968},
      ISSN = {0020-9910},
   MRCLASS = {32H30 (32Q45 37L99 54H20)},
  MRNUMBER = {3763403},
MRREVIEWER = {Tuan Dinh Huynh},
       DOI = {10.1007/s00222-017-0758-9},
       URL = {https://doi.org/10.1007/s00222-017-0758-9},
}

@article{RANonaut22,
 author = {Rodrigues, Fagner B. and Acevedo, Jeovanny Muentes},
 title = {Mean dimension and metric mean dimension for non-autonomous dynamical systems},
 fjournal = {Journal of Dynamical and Control Systems},
 journal = {J. Dyn. Control Syst.},
 issn = {1079-2724},
 volume = {28},
 number = {4},
 pages = {697--723},
 year = {2022},
 language = {English},
 doi = {10.1007/s10883-021-09541-6},
 keywords = {37B55,37B40,37C45},
 zbMATH = {7593624},
 Zbl = {1506.37025}
}

@article{Gutman17,
 author = {Gutman, Yonatan},
 title = {Embedding topological dynamical systems with periodic points in cubical shifts},
 fjournal = {Ergodic Theory and Dynamical Systems},
 journal = {Ergodic Theory Dyn. Syst.},
 issn = {0143-3857},
 volume = {37},
 number = {2},
 pages = {512--538},
 year = {2017},
 language = {English},
 doi = {10.1017/etds.2015.40},
 keywords = {37B51,37B05,37B10,37B02,35Q30},
 url = {semanticscholar.org/paper/ef6558724b1dcaf6f6338cec1cb0038f1e809c7b},
 zbMATH = {6704321},
 Zbl = {1435.37034}
}

@misc{BowenTopA70,
 author = {Bowen, Rufus},
 title = {Topological entropy and axiom {{\(A\)}}},
 year = {1970},
 language = {English},
 howpublished = {Global {Analysis}, {Proc}. {Sympos}. {Pure} {Math}. 14, 23--41},
 keywords = {58D20,54C70,37A99},
 zbMATH = {3330657},
 Zbl = {0207.54402}
}

@article{KSNonaut96,
 author = {Kolyada, Sergi{\u{\i}} and Snoha, {\v{L}}ubom{\'{\i}}r},
 title = {Topological entropy of nonautonomous dynamical systems},
 fjournal = {Random \& Computational Dynamics},
 journal = {Random Comput. Dyn.},
 issn = {1061-835X},
 volume = {4},
 number = {2-3},
 pages = {205--233},
 year = {1996},
 language = {English},
 keywords = {54C70,28D20},
 zbMATH = {1026221},
 Zbl = {0909.54012}
}

@article {ENDuke,
    AUTHOR = {Elliott, George A. and Niu, Zhuang},
     TITLE = {The {C{$^*$}}-algebra of a minimal homeomorphism of zero mean
              dimension},
   JOURNAL = {Duke Math. J.},
  FJOURNAL = {Duke Mathematical Journal},
    VOLUME = {166},
      YEAR = {2017},
    NUMBER = {18},
     PAGES = {3569--3594},
      ISSN = {0012-7094},
   MRCLASS = {46L35 (37B05 46L85)},
  MRNUMBER = {3732883},
MRREVIEWER = {N. N. Ganikhodjaev},
       DOI = {10.1215/00127094-2017-0033},
       URL = {https://doi.org/10.1215/00127094-2017-0033},
}

@article {Niucomparisonradius,
    AUTHOR = {Niu, Zhuang},
     TITLE = {Comparison radius and mean topological dimension: {R}okhlin
              property, comparison of open sets, and subhomogeneous {$\rm
              C^*$}-algebras},
   JOURNAL = {J. Anal. Math.},
  FJOURNAL = {Journal d'Analyse Math\'{e}matique},
    VOLUME = {146},
      YEAR = {2022},
    NUMBER = {2},
     PAGES = {595--672},
      ISSN = {0021-7670},
   MRCLASS = {37A55},
  MRNUMBER = {4468009},
       DOI = {10.1007/s11854-022-0205-8},
       URL = {https://doi.org/10.1007/s11854-022-0205-8},
}

@article{BowenExpansive72,
 author = {Bowen, Rufus},
 title = {Entropy-expansive maps},
 fjournal = {Transactions of the American Mathematical Society},
 journal = {Trans. Am. Math. Soc.},
 issn = {0002-9947},
 volume = {164},
 pages = {323--331},
 year = {1972},
 language = {English},
 doi = {10.2307/1995978},
 keywords = {28D20},
 zbMATH = {3363922},
 Zbl = {0229.28011}
}

@book{RudinFunctional,
 author = {Rudin, Walter},
 title = {Functional analysis.},
 edition = {2nd ed.},
 isbn = {0-07-054236-8},
 year = {1991},
 publisher = {New York, NY: McGraw-Hill},
 language = {English},
 keywords = {46-02,47-02,46Axx,46Bxx,46Exx,46Fxx,46Jxx,47B06,47D03,47H10,46L05,47B15},
 zbMATH = {1022519},
 Zbl = {0867.46001}
}

@article{CPV24,
title = {A convex analysis approach to the metric mean dimension: Limits of scaled pressures and variational principles},
journal = {Advances in Mathematics},
volume = {436},
pages = {109407},
year = {2024},
issn = {0001-8708},
doi = {https://doi.org/10.1016/j.aim.2023.109407},
url = {https://www.sciencedirect.com/science/article/pii/S0001870823005509},
author = {Maria Carvalho and Gustavo Pessil and Paulo Varandas},
keywords = {Metric mean dimension, Pressure function, Katok entropy, Variational principle, Equilibrium state, Box dimension},
abstract = {We introduce the concept of upper metric mean dimension of a one-parameter family of scaled pressure functions, which extends the corresponding notion for a single potential and satisfies a variational principle. This approach, supported by Convex Analysis, conveys a definition of measure-theoretic upper metric mean dimension, which is concave and upper semi-continuous, and therewith equilibrium states. In the context of dynamical systems, we establish a variational principle for the metric mean dimension with potential in terms of Katok entropy. As an application, we provide a simple formula for the upper metric mean dimension with potential for the shift on the space ([0,1]D)N, for every D∈N, which links mean dimension theory with ergodic optimization.}
}

@misc{HuoSponges24,
      title={Mean dimension theory for infinite dimensional {B}edford--{M}cMullen sponges}, 
      author={Qiang Huo},
      year={2024},
      eprint={2412.02278},
      archivePrefix={arXiv},
      primaryClass={math.DS},
      HOWPUBLISHED={Preprint, available at https://arxiv.org/abs/2412.02278},
}

@misc{OR25branching,
      title={Two-scale branching functions and inhomogeneous attractors}, 
      author={Vilma Orgov\'{a}nyi and Alex Rutar},
      year={2025},
      eprint={2510.07013},
      archivePrefix={arXiv},
      primaryClass={math.DS},
      HOWPUBLISHED={Preprint, available at https://arxiv.org/abs/2510.07013},
}

@article{Olson02,
 author = {Olson, Eric},
 title = {Bouligand dimension and almost {Lipschitz} embeddings.},
 fjournal = {Pacific Journal of Mathematics},
 journal = {Pac. J. Math.},
 issn = {1945-5844},
 volume = {202},
 number = {2},
 pages = {459--474},
 year = {2002},
 language = {English},
 doi = {10.2140/pjm.2002.202.459},
 keywords = {37C45,28A80},
 zbMATH = {1818954},
 Zbl = {1046.37013}
}

@article {LL25,
    AUTHOR = {Li, Xianqiang and Luo, Xiaofang},
     TITLE = {Mean {H}ausdorff dimension of some infinite dimensional
              fractals for amenable group actions},
   JOURNAL = {J. Math. Phys.},
  FJOURNAL = {Journal of Mathematical Physics},
    VOLUME = {66},
      YEAR = {2025},
    NUMBER = {12},
     PAGES = {Paper No. 122704},
      ISSN = {0022-2488,1089-7658},
   MRCLASS = {37B40 (28A78 28A80 54)},
  MRNUMBER = {4997027},
       DOI = {10.1063/5.0258258},
       URL = {https://doi.org/10.1063/5.0258258},
}

@article {LedWal77,
    AUTHOR = {Ledrappier, Fran{\c{c}}ois and Walters, Peter},
     TITLE = {A relativised variational principle for continuous
              transformations},
   JOURNAL = {J. London Math. Soc. (2)},
  FJOURNAL = {Journal of the London Mathematical Society. Second Series},
    VOLUME = {16},
      YEAR = {1977},
    NUMBER = {3},
     PAGES = {568--576},
      ISSN = {0024-6107},
   MRCLASS = {28A65},
  MRNUMBER = {MR0476995 (57 \#16540)},
MRREVIEWER = {Manfred Denker},
}

@misc{BanajiRutar24,
      title={Lower box dimension of infinitely generated self-conformal sets}, 
      author={Amlan Banaji and Alex Rutar},
      year={2024},
      eprint={2406.12821},
      archivePrefix={arXiv},
      primaryClass={math.DS},
      url={https://arxiv.org/abs/2406.12821}, 
      HOWPUBLISHED={Preprint, available at https://arxiv.org/abs/2406.12821},
}

@misc{RutarCFNotes26,
      title={Multi-scale properties of continued fraction sets}, 
      author={Alex Rutar},
      year={2026},
      eprint={2606.07139},
      archivePrefix={arXiv},
      primaryClass={math.NT},
      url={https://arxiv.org/abs/2606.07139}, 
      HOWPUBLISHED={Preprint, available at https://arxiv.org/abs/2606.07139},
}

\end{document}